\numberwithin{equation}{section}
\numberwithin{figure}{section}
\theoremstyle{plain}
\newtheorem{thm}{\protect\theoremname}
  \theoremstyle{plain}
\newtheorem{cor}[thm]{Corollary}
  \theoremstyle{plain}
  \newtheorem{lem}[thm]{\protect\lemmaname}
  \theoremstyle{definition}
  \newtheorem{defn}[thm]{\protect\definitionname}
  \theoremstyle{remark}
  \newtheorem*{rem*}{\protect\remarkname}
  \theoremstyle{remark}
  \newtheorem{rem}[thm]{\protect\remarkname}
  \theoremstyle{plain}
  \newtheorem{prop}[thm]{\protect\propositionname}
  \theoremstyle{plain}
  \newtheorem*{thm*}{\protect\theoremname}
\numberwithin{thm}{section}
  \providecommand{\definitionname}{Definition}
  \providecommand{\lemmaname}{Lemma}
  \providecommand{\propositionname}{Proposition}
  \providecommand{\remarkname}{Remark}
  \providecommand{\theoremname}{Theorem}
\providecommand{\theoremname}{Theorem}
\begin{document}
\global\long\def\GG{\mathbb{G}}
\global\long\def\rank#1#2{\textrm{rank}_{#2}\left(#1\right)}
\global\long\def\malg#1{\mathcal{B}_{#1}}
\global\long\def\vnalg#1{\mathcal{A}_{#1}}
\global\long\def\salg#1{\Sigma_{#1}}
\global\long\def\SL#1{\mathrm{SL}_{#1}}
\global\long\def\GL#1{\mathrm{GL}_{#1}}
\global\long\def\SLk#1#2{\mathrm{SL}_{#1}\left(#2\right)}
\global\long\def\HH{\mathbb{H}}
\global\long\def\PP{\mathbb{P}}
\global\long\def\VV{\mathbb{V}}
\global\long\def\QQ{\mathbb{Q}}
\global\long\def\R{\mathbb{R}}
\global\long\def\C{\mathbb{C}}
\global\long\def\SS{\mathbb{S}}
\global\long\def\N{\mathbb{N}}
\global\long\def\Z{\mathbb{Z}}
\global\long\def\Qp{\mathbb{Q}_{p}}
\global\long\def\SubG#1{\textrm{Sub}(#1)}

\title{The Nevo-Zimmer intermediate factor theorem over local fields}

\author{Arie Levit}

\maketitle

\begin{abstract}

The Nevo-Zimmer  theorem classifies the possible intermediate $G$-factors $Y$ in $X \times \nicefrac{G}{P} \to Y \to X$, where $G$ is a higher rank semisimple Lie group, $P$  a minimal parabolic and $X$  an irreducible $G$-space with an invariant probability measure.

An important corollary is the Stuck-Zimmer theorem, which states that a faithful irreducible action of a higher rank Kazhdan semisimple Lie group with an invariant probability measure is either transitive or free, up to a null set.

We present a different proof of the first theorem, that allows us to extend these two well-known theorems to linear groups over arbitrary local fields.
\end{abstract}

\section{Introduction}
\label{sec:introduction}

Let $k$ be a local field and let $\GG$ be a connected simply-connected semisimple
algebraic $k$-group without $k$-anisotropic almost $k$-simple subgroups\footnote{Subsection \ref{sub:subgroup_structure} contains definitions concerning algebraic $k$-groups, as well as some examples.}.

\subsection{The Nevo-Zimmer intermediate factor theorem}
\label{sub:intro_intermediate_factor_theorem}

Our central task is to prove the following  version of the Nevo-Zimmer intermediate factor theorem for linear algebraic groups local fields.
\begin{thm}
\textbf{\label{thm:Intermediate-factor-theorem}} Assume that $\rank{\GG}k\ge 2$ and let $\PP$
be a minimal parabolic $k$-subgroup. Let $X$ be an irreducible
 $\GG_{k}$-space 
  with finite invariant measure.  Given an ergodic $\GG_{k}$-space $Y$ and a pair of $\GG_k$-maps \footnote{The reader is referred to Subsection \ref{sub:measurable_group_actions} for a discussion of measurable group actions, including the definitions of  a $G$-space and a $G$-map.}
\[
X\times\nicefrac{\GG_{k}}{\PP_{k}}\to Y\to X
\]
whose composition is the projection, there is a parabolic $k$-subgroup
$\QQ$ containing $\PP$ such that 
\[
Y\cong X\times\nicefrac{\GG_{k}}{\QQ_{k}}
\]
as a $\GG_{k}$-space. Moreover the maps $X\times\nicefrac{\GG_{k}}{\PP_{k}}\to Y$
and $Y\to X$ are projections.
\end{thm}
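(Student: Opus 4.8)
\emph{Strategy.} Equip $B:=\nicefrac{\GG_{k}}{\PP_{k}}$ with its unique $\GG_{k}$-quasi-invariant measure class, picking a representative $\nu$ that is stationary for some admissible probability measure on $\GG_{k}$, so that $(B,\nu)$ is the Furstenberg--Poisson boundary of $\GG_{k}$, and let $\mu$ denote the invariant probability measure on $X$. I would begin with a reformulation. Disintegrating the composite $X\times B\to Y\to X$ over $X$ exhibits, for $\mu$-a.e.\ $x$, the fibre $Y_{x}$ as a measurable quotient of $(B,\nu)$; equivalently there is a $\mu$-measurable field $x\mapsto\mathcal A_{x}$ of complete sub-$\sigma$-algebras of the measure algebra of $(B,\nu)$ with $Y_{x}=(B,\nu)/\mathcal A_{x}$, and $\GG_{k}$-equivariance of the two given maps translates into $g_{*}\mathcal A_{x}=\mathcal A_{gx}$ for a.e.\ $(g,x)\in\GG_{k}\times X$, where $g_{*}$ is induced by the translation $g\colon B\to B$. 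Conversely, every such equivariant field produces an intermediate factor of $X\times B$ over $X$, and that factor is $X\times\nicefrac{\GG_{k}}{\QQ_{k}}$ with both maps projections precisely when $\mathcal A_{x}$ agrees, for $\mu$-a.e.\ $x$, with the sub-$\sigma$-algebra $\mathcal A^{\QQ}$ pulled back along the projection $B\to\nicefrac{\GG_{k}}{\QQ_{k}}$. So the whole statement reduces to showing that any $\GG_{k}$-equivariant measurable field of complete sub-$\sigma$-algebras of $(B,\nu)$ over the probability-measure-preserving $\GG_{k}$-space $(X,\mu)$ is $\mu$-a.e.\ equal to $\mathcal A^{\QQ}$ for a single parabolic $k$-subgroup $\QQ\supseteq\PP$.

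The heart of the proof --- and the only place where $\rank{\GG}k\ge 2$ really enters --- is to prove that $x\mapsto\mathcal A_{x}$ is $\mu$-essentially constant, its value being a $\GG_{k}$-invariant complete sub-$\sigma$-algebra $\mathcal A$ of $(B,\nu)$. The plan is to exploit that the higher-rank hypothesis provides inside $\PP_{k}$ at least two independent $k$-split one-parameter directions whose conjugation action contracts the big Bruhat cell of $B$ onto the $\PP_{k}$-fixed coset, and whose horospherical subgroups together with their opposites generate $\GG_{k}$ (here the absence of $k$-anisotropic almost $k$-simple factors is used); in $k$-rank one there is essentially only one such pair and the theorem indeed fails. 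Combining the martingale convergence of the conditional measures of $Y\to X$ with these contractions, and then bootstrapping invariance of $\mathcal A_{x}$ through the Mautner phenomenon --- in the spirit of the ``strongly almost transitive'' and relative metric ergodicity arguments of Bader--Shalom for product groups --- one should force $\mathcal A_{x}$ to be $\GG_{k}$-invariant; a $\GG_{k}$-invariant sub-$\sigma$-algebra does not depend on $x$, and irreducibility (hence ergodicity) of $(X,\mu)$ then finishes. I expect essentially all the difficulty to be concentrated here, and in particular in carrying out the requisite structure theory and the contraction estimates uniformly over archimedean and non-archimedean local fields.

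Granting that step, a $\GG_{k}$-invariant complete sub-$\sigma$-algebra $\mathcal A$ of $(B,\nu)$ is the same datum as a $\GG_{k}$-equivariant measurable quotient $(B,\nu)\to(Z,\zeta)$. Stationarity and proximality of $\nu$ descend to $\zeta$, so $(Z,\zeta)$ is a $\GG_{k}$-boundary; by the classification of boundaries of semisimple groups over local fields, $(Z,\zeta)\cong\nicefrac{\GG_{k}}{\QQ_{k}}$ for some parabolic $k$-subgroup $\QQ$, and, after adjusting the isomorphism, the quotient $B\to Z$ becomes the canonical projection $\nicefrac{\GG_{k}}{\PP_{k}}\to\nicefrac{\GG_{k}}{\QQ_{k}}$; in particular $\PP\subseteq\QQ$ and $\mathcal A=\mathcal A^{\QQ}$. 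Substituting $\mathcal A_{x}\equiv\mathcal A^{\QQ}$ back into the first-paragraph reformulation yields a $\GG_{k}$-isomorphism $Y\cong X\times\nicefrac{\GG_{k}}{\QQ_{k}}$ taking $X\times\nicefrac{\GG_{k}}{\PP_{k}}\to Y$ to $\mathrm{id}\times\bigl(\nicefrac{\GG_{k}}{\PP_{k}}\to\nicefrac{\GG_{k}}{\QQ_{k}}\bigr)$ and $Y\to X$ to the projection, which is exactly the asserted conclusion. No separate reduction to the almost $k$-simple case is needed: the argument uses only $\rank{\GG}k\ge 2$ and the standing hypothesis on anisotropic factors, and handles products on the same footing.
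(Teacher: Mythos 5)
Your first and third paragraphs are sound and match the paper's skeleton: the reformulation of the given factorization as an equivariant measurable field $x\mapsto\mathcal A_x$ of sub-$\sigma$-algebras of $\malg{\nicefrac{\GG_k}{\PP_k}}$ is exactly the direct-integral decomposition of Subsection~\ref{sub:measure_algebras}, and the final classification of $\GG_k$-invariant sub-algebras of $\malg{\nicefrac{\GG_k}{\PP_k}}$ as $\malg{\nicefrac{\GG_k}{\QQ_k}}$ is Margulis's factor theorem (Proposition~\ref{prop:invariant under G+}). But your middle paragraph --- which you yourself call the heart of the proof --- is a list of ingredients (contracting tori, Mautner, martingale convergence, Bader--Shalom-style relative metric ergodicity) together with a prediction that combining them ``should force'' $\mathcal A_x$ to be $\GG_k$-invariant. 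No argument is actually given, and this is precisely where the entire technical content of the theorem resides.

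The concrete obstruction your sketch does not confront is the following. Feeding $C(x)\in\mathcal A_x$ into the contraction $\mathrm{Inn}(s)$ with $s\in S_\theta$ and approximating $g$ by $h_nu^{-1}s^{-m_n}$ produces, in the limit, the set $g\psi_\theta(uC(x))$ as a limit in measure of $h_nC(x)$; but $h_nC(x)$ lies in the fibre algebra $\mathcal A_{h_nx}$ over a \emph{moving base point}, and there is no reason a priori that a limit of elements of the varying sub-algebras $\mathcal A_{h_nx}$ should land in $\mathcal A_x$. This is exactly the imprecision in Zimmer's 1982 proof that the paper sets out to fix. The fix occupies Section~\ref{sec:measure algebras}: equip $\malg{\nicefrac{\GG_k}{\PP_k}}$ with the topology of convergence in measure (in which every Boolean sub-algebra is closed, Proposition~\ref{prop:properties of BX}), put the Effros Borel structure on its space of closed subsets, prove that $x\mapsto\mathcal A_x$ is Borel into that space (Theorem~\ref{thm:iota is measurable}), and then, via the ergodicity argument of Propositions~\ref{pro:GxZ acts ergodically} and~\ref{prop:Z_F is conull}, choose the approximating sequence $h_n$ so that $h_nx$ stays inside shrinking Effros ``neighbourhoods'' $N_{\alpha_n(x)}$ of $x$; Lemma~\ref{lem:Effros convergence weaker form} then forces the limit into $\mathcal A_x$. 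Note also that the resulting Main Lemma~\ref{lem:main lemma} does not assert $\GG_k$-invariance of $\mathcal A_x$ outright --- it only places the specific sets $g\psi_\theta(uC(x))$ inside $\mathcal A_x$ --- and the essential constancy of $x\mapsto\mathcal A_x$ is then extracted by a maximality-and-contradiction argument rather than established first as you propose. Without an argument replacing this apparatus, your proposal leaves the crucial step unproved.
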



The case $k=\R$ of Theorem \ref{thm:Intermediate-factor-theorem} is essentially the intermediate factor theorem for real Lie groups, which first appeared in Zimmer's work \cite{zimmer1982ergodic}. The proof given in \cite{zimmer1982ergodic} is a natural generalization of Margulis' proof of the factor theorem \cite{margulis1978quotient}. However, as was pointed out by Nevo and Zimmer in Section 4 of \citep{nevo2002generalization} that proof contains an imprecise argument.

An additional treatment of the real Lie group case is given in a series of papers by Nevo and Zimmer (\citep{nevo1999homogenous, nevo2002generalization}). In fact, a much stronger version called the generalized intermediate factor theorem is proved in \citep{nevo2002generalization}. This approach uses ideas of Furstenberg such as the interplay between stationary and $P$-invariant measures, and diverges from the original approach of Margulis in \cite{margulis1978quotient}.

We mention that a corrected proof along the lines of \cite{zimmer1982ergodic} was given by Nevo and Zimmer in the unpublished work \cite{nevo1998proof}.

Our chief motivation is to present a proof of Theorem \ref{thm:Intermediate-factor-theorem} that is as close as possible to the lines of \cite{zimmer1982ergodic} and \cite{margulis1978quotient}, while extending the result to local fields. Indeed, the factor theorem \cite{margulis1978quotient} is already given in this (indeed, even greater) generality. We remark that our approach differs from that of \cite{nevo1998proof}. 

Indeed, the real case of Theorem \ref{thm:Intermediate-factor-theorem} implies the real Lie group version at least for center-free semisimple Lie groups, and the same remark applies to Theorem \ref{thm:stuck-zimmer-over-local-fields}. This reduction is based on the fact that every connected semisimple real Lie group with trivial center is isomorphic to $\GG_{\R}^{0}$
for some connected but possibly non simply-connected algebraic $\R$-group. See Subsection \ref{sub:the real case} for further discussion.

An important corollary of the intermediate factor theorem is the theorem of Stuck and Zimmer discussed in Subsection \ref{sub:intro_nevo_stuck_zimmer_theorem} below. Moreover the factor theorem is a consequence of the intermediate factor theorem,
and the normal subgroup theorem for groups having property $\left(T\right)$
is a consequence of the Stuck-Zimmer theorem. So these four results are related, implying and generalizing each other.

We mention that the Nevo-Zimmer intermediate factor theorem has been extended in several directions since it first appeared; see for example the works of Bader-Shalom \cite{bader2006factor}, Creutz-Peterson \cite{creutz2013stabilizers} and the above mentioned \cite{nevo2002generalization}.

\subsection{The Stuck-Zimmer theorem}
\label{sub:intro_nevo_stuck_zimmer_theorem}

The following is a formulation of this theorem in the setting of linear algebraic groups over local fields.

\begin{thm}
\label{thm:stuck-zimmer-over-local-fields} 
Assume that $\rank{\GG}k \ge 2$ and $\GG_{k}$ has property
(T). Then every faithful, properly ergodic, irreducible
and finite measure preserving action of $\GG_{k}$ is essentially
free.

More generally, if the action has central kernel (but is possibly not faithful)  then $\mu$-almost every point has central stabilizer.
\end{thm}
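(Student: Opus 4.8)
The strategy is to deduce Theorem~\ref{thm:stuck-zimmer-over-local-fields} from Theorem~\ref{thm:Intermediate-factor-theorem}, following the line of argument of Stuck and Zimmer. Write $G=\GG_{k}$, fix a minimal parabolic $\PP$ and put $Z=Z(\GG)_{k}$; let $(X,\mu)$ be the action in question. First I would record two reductions. The stabilizer map $x\mapsto G_{x}$ is measurable and $G$-equivariant and $\{x:G_{x}\subseteq Z\}$ is $G$-invariant, so by ergodicity it is enough to rule out the alternative in which $G_{x}$ is non-central for $\mu$-a.e.\ $x$. Moreover, since $Z$ is finite and $\{x:z\in G_{x}\}$ is $G$-invariant for each $z\in Z$, faithfulness together with ``$G_{x}\subseteq Z$ a.e.'' forces $G_{x}=\{e\}$ a.e., so the first assertion follows from the second. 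Assume from now on that $G_{x}$ is non-central for a.e.\ $x$, and aim for a contradiction.

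This is the one place Theorem~\ref{thm:Intermediate-factor-theorem} is used. Let $\nu$ be a $G$-quasi-invariant probability measure of full support on $\GG_{k}/\PP_{k}$ (for instance the $K$-invariant one, $K$ a good maximal compact subgroup). Since $X$ is irreducible, Moore's ergodicity theorem implies that $\PP_{k}$ acts ergodically on $(X,\mu)$, and hence, by the standard identification $L^{\infty}(X\times\GG_{k}/\PP_{k})^{G}\cong L^{\infty}(X)^{\PP_{k}}$, the product $X\times\GG_{k}/\PP_{k}$ is an ergodic $G$-space over $X$. Let $Y$ be its factor attached to the ($G$-invariant) $\sigma$-algebra of sets whose trace on $\{x\}\times\GG_{k}/\PP_{k}$ is $G_{x}$-invariant for a.e.\ $x$; then $Y$ is ergodic and fits into $G$-maps
\[
X\times\nicefrac{\GG_{k}}{\PP_{k}}\to Y\to X
\]
with the projection as composition, so Theorem~\ref{thm:Intermediate-factor-theorem} gives a parabolic $\QQ\supseteq\PP$ with $Y\cong X\times\GG_{k}/\QQ_{k}$, the maps being projections. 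Unwinding the isomorphism, for a.e.\ $x$ the $G_{x}$-ergodic components of $(\GG_{k}/\PP_{k},\nu)$ are exactly the fibres of the canonical map $\GG_{k}/\PP_{k}\to\GG_{k}/\QQ_{k}$; each such fibre is $G_{x}$-invariant, and since the one over $h\QQ_{k}$ has setwise $G$-stabilizer $h\QQ_{k}h^{-1}$, this yields $G_{x}\subseteq h\QQ_{k}h^{-1}$ for a dense set of $h$, hence --- the condition being closed in $h\QQ_{k}\in\GG_{k}/\QQ_{k}$ --- for all $h$, i.e.\ $G_{x}\subseteq\bigcap_{g\in G}g\QQ_{k}g^{-1}$. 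If $\QQ$ were proper, this intersection would be the largest normal subgroup of $G$ contained in $\QQ_{k}$, which equals $Z$ because $\GG$ has no $k$-anisotropic almost-$k$-simple factors --- contradicting non-centrality. Therefore $\QQ=\GG$: for a.e.\ $x$, $G_{x}$ acts ergodically on $(\GG_{k}/\PP_{k},\nu)$.

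It remains to eliminate this last case, for which I would use classical tools. The map $x\mapsto\mathrm{Lie}\big(\overline{G_{x}}^{\,\mathrm{Zar}}\big)$, from $X$ to the Grassmannian of subspaces of $\mathfrak{g}=\mathrm{Lie}(\GG)$, is measurable, $G$-equivariant for the adjoint action, and pushes $\mu$ to a $G$-invariant probability measure on the Grassmannian; since $\GG$, having no $k$-anisotropic factors, is generated by its $k$-unipotent one-parameter subgroups, the Borel density theorem (valid over any local field) puts this measure on the $\mathrm{Ad}(\GG)$-fixed points, so that $\mathrm{Lie}\big(\overline{G_{x}}^{\,\mathrm{Zar}}\big)$ is an ideal of $\mathfrak{g}$ for a.e.\ $x$, and a fixed one by ergodicity. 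Correspondingly the identity component of $\overline{G_{x}}^{\,\mathrm{Zar}}$ is a fixed normal subgroup $\GG_{I}=\prod_{i\in I}\GG_{i}$ of $\GG$. If $I$ were proper and nonempty, then $\overline{G_{x}}^{\,\mathrm{Zar}}$ would act through a finite quotient on each boundary factor $\GG_{j,k}/\PP_{j,k}$ with $j\notin I$, so $G_{x}$ would have finite orbits there and fail to be ergodic on $\GG_{k}/\PP_{k}$, contradicting what was just obtained; hence either $G_{x}$ is Zariski-dense in $\GG$ for a.e.\ $x$, or $G_{x}$ is finite for a.e.\ $x$. In the finite case, ergodicity makes $G_{x}$ a.e.\ conjugate to a fixed finite subgroup $F$, so $x\mapsto G_{x}$ is a $G$-equivariant map onto the $\GG_{k}$-orbit of $F$ inside the variety $\GG/N_{\GG}(F)$, pushing $\mu$ to a $\GG_{k}$-invariant probability measure there; Borel density again forces this measure onto the $\GG$-fixed points, which are empty unless $N_{\GG}(F)=\GG$, i.e.\ unless $F$ is central --- once more contradicting non-centrality.

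The surviving possibility --- $G_{x}$ Zariski-dense in $\GG$ for a.e.\ $x$ --- is where I expect the argument to work hardest; it is the analogue of the deepest part of Stuck--Zimmer and of Margulis' normal subgroup theorem, and accordingly the place where property (T) is essential. Here I would argue that, with $G_{x}$ Zariski-dense, property (T) together with the amenability of the $\GG_{k}$-space $\GG_{k}/\PP_{k}$ and the finiteness of $\mu$ forces the orbit equivalence relation of $(X,\mu)$ to carry a $G$-invariant probability measure concentrated on a single orbit, so that $X$ is essentially transitive --- contradicting proper ergodicity. Besides this step, the technical points requiring care are the verification that the factor $Y$ above is a bona fide ergodic $G$-space realising the displayed maps (measurability of the field of $G_{x}$-invariant $\sigma$-algebras, and that the isomorphism of Theorem~\ref{thm:Intermediate-factor-theorem} carries the fibration of $X\times\GG_{k}/\PP_{k}$ over $X$ to that of $X\times\GG_{k}/\QQ_{k}$ over $X$), and making sure that every Borel density input is applied in its algebraic (Zariski-closure) form rather than a Lie-theoretic one, so as to remain valid when $k$ is non-archimedean and $G_{x}$ may be totally disconnected.
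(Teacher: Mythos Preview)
Your approach diverges from the paper's at the crucial point, and the divergence leaves a genuine gap.

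The paper uses property~(T) \emph{at the outset}, via weak amenability: since $G$ has~(T) and the action is properly ergodic, it is not weakly amenable (Lemma~\ref{lem:lemma on weak amenable actions}); combined with the amenability of the $G$-space $G/P$, this produces an intermediate factor $Y$ which is \emph{guaranteed to be nontrivial}, i.e.\ after applying Theorem~\ref{thm:Intermediate-factor-theorem} the parabolic $\QQ$ is automatically proper. Moreover the weak-amenability construction ensures $G_x$ acts trivially on the fibre $p^{-1}(x)$, so $G_x\subseteq\bigcap_g gQ_kg^{-1}$ directly. The proof then concludes via Lemma~\ref{lem:on stabilisers in proper normal subgroups}.

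Your construction of $Y$ (the $\sigma$-algebra of sets with $G_x$-invariant slices) has no such nontriviality built in, so you are forced into the case $\QQ=\GG$. There your argument collapses: you assert that $\QQ=\GG$ implies $G_x$ acts ergodically on $G/P$, but $\mathcal{B}_Y(x)$ being trivial only says no \emph{coherent measurable family} of nontrivial $G_x$-invariant sets exists, not that each individual $\mathcal{A}_x$ is trivial --- closing this requires a measurable-selection argument you do not supply. More seriously, even granting ergodicity, your final paragraph on the Zariski-dense case is not a proof but a description of where a proof should go; the sentence ``property~(T) together with the amenability of $G/P$\dots forces the orbit equivalence relation to carry a $G$-invariant probability measure concentrated on a single orbit'' has no content as written. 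This is precisely the step the weak-amenability machinery is designed to handle, and without it you have not used property~(T) anywhere.

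Two smaller points. First, your claim that $\bigcap_g gQ_kg^{-1}=Z$ when $\QQ$ is proper is false unless $\GG$ is almost $k$-simple: if $\GG=\GG_1\times\GG_2$ and $\QQ=\QQ_1\times\GG_2$ with $\QQ_1$ proper, the intersection contains all of $(\GG_2)_k$. The paper handles this via Lemma~\ref{lem:on stabilisers in proper normal subgroups}, showing the intersection lies in the centraliser of some almost-simple factor and then using irreducibility. Second, your Borel-density step passes through $\mathrm{Lie}(\overline{G_x}^{\,\mathrm{Zar}})$, which is problematic in positive characteristic; you note this at the end but do not repair it, and the paper avoids Lie-algebra arguments entirely.
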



For example as $\GG=\textrm{SL}_n$ is simply-connected the above theorem holds for the groups $\textrm{SL}_n(\R)$, $\textrm{SL}_n(\Qp)$ and $\textrm{SL}_n \left( \mathbb{F}_q ((t)) \right)$ for every prime $p$ and prime power $q$ as long as $n\ge3$. 

The classical theorem of Stuck and Zimmer \citep{stuck1994stabilizers} is the analogous statement for actions of semisimple real Lie groups of real rank at least $2$ with property $(T)$, having finite center and no compact factors.

The proof given in  \citep{stuck1994stabilizers} is deduced in a quite general fashion from the Nevo-Zimmer intermediate factor theorem. In fact, the possibility of the current generalization is already suggested there (see Section 5.3 of \citep{stuck1994stabilizers}). Therefore our contribution to Theorem \ref{thm:stuck-zimmer-over-local-fields} is little more than its restatement in the above form, and the application of Theorem \ref{thm:Intermediate-factor-theorem}.

Let us note that the Stuck-Zimmer theorem has been recently extended in the above mentioned work \cite{creutz2013stabilizers} as well as in \cite{hartman2013stabilizer}.

\begin{rem*}
$\GG_{k}$ has property (T) whenever $\rank{\HH}k \neq 1$  for every almost $k$-simple subgroup
$\HH$ of $\GG$, and this is in fact a necessary condition unless $k$ is $\R$ or $\C$. 
\end{rem*}

\subsection{Rigidity of invariant random subgroups}
\label{sub:applications of the stuck-zimmer theorem}

A context in which Theorem \ref{thm:stuck-zimmer-over-local-fields} admits an
interesting and powerful application is that of invariant random subgroups. 

Given a second countable locally compact group $G$, an \emph{invariant random subgroup} of $G$ is a Borel probability measure on the space of closed subgroups of $G$ that is invariant under conjugation.

The space of closed subgroups is regarded with the Chaubuty topology. For more information on this topology and on invariant random subgroups the reader is referred to e.g. \cite{abert2012growth, abert2012kesten,gelander2015lecture}. See also Remark \ref{rem:on chaubuty and effros borel} below.

Obvious examples of invariant random subgroups include Dirac measures $\delta_{N}$ supported on normal subgroups $N\vartriangleleft G$. Moreover given a lattice $\Gamma \le G$ there is an invariant random subgroup $\mu_{\Gamma}$ obtained as the push-forward of the probability measure on $G / \Gamma$ to the space of closed subgroups via the conjugation map $g\Gamma \mapsto g \Gamma g^{-1}$. In this sense invariant random subgroups are a common generalization of normal subgroups and lattices.

The Stuck-Zimmer theorem is used in \cite{abert2012growth} to show that the above examples are the only possible ones for a higher rank real Lie group. In light of Theorem \ref{thm:stuck-zimmer-over-local-fields} we are able to extend this rigidity phenomenon to local fields and deduce

\begin{cor}
	\label{cor:On IRS}
	Let $\GG$ be an algebraic  $k$-group as above. In particular assume that $k\textrm{-rank}(\GG)\ge 2$ and that $\GG_k$ has property $(T)$. Then every non-atomic irreducible invariant random subgroup of $\GG_k$ is of the form $\mu_{\Gamma}$ for some irreducible lattice $\Gamma\le G$.
\end{cor}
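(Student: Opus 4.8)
The plan is to apply the Stuck--Zimmer theorem (Theorem~\ref{thm:stuck-zimmer-over-local-fields}) to the conjugation action of $G:=\GG_{k}$ on the space $\SubG{G}$ of closed subgroups, equipped with the given invariant random subgroup $\mu$. By hypothesis $\mu$ is a conjugation-invariant Borel probability measure, so this is a finite measure preserving $G$-action; irreducibility of $\mu$ says precisely that $\bigl(\SubG{G},\mu\bigr)$ is an irreducible $G$-space, and in particular (applying the definition to the normal subgroup $G$ itself) the action is ergodic. Throughout, the structure theory of $\GG$ -- simply connected, semisimple, without $k$-anisotropic almost $k$-simple factors -- enters through $G=G^{+}$: thus $G$ is generated by unipotents, has finite centre, has no proper open subgroup of finite covolume, and has only finitely many closed normal subgroups (each a product of some of the almost $k$-simple factors times a subgroup of the finite centre).

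First I would show that the kernel $N$ of the action is central. Being a closed normal subgroup it is one of the finitely many just described; if $N=G$ then $\mu$ is carried by the finite set of normal subgroups of $G$, hence atomic, while if $N$ is non-central it contains an almost $k$-simple factor $\HH_{k}$, which then acts trivially on $\bigl(\SubG{G},\mu\bigr)$, contradicting irreducibility (a trivial action is ergodic only on a point) and non-atomicity. Hence $N$ is central and Theorem~\ref{thm:stuck-zimmer-over-local-fields} applies. If the action were properly ergodic, its conclusion would force $\mu$-almost every $H$ to have central stabiliser $N_{G}(H)$; but $H\subseteq N_{G}(H)$ and the centre is finite, so $\mu$ would be supported on the finitely many central subgroups and again be atomic. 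Therefore the action is essentially transitive: up to a null set $\mu$ is the image of the $G$-invariant probability measure on $G/L$ under $gL\mapsto g\Gamma g^{-1}$, where $\Gamma$ is the $\mu$-generic subgroup and $L=N_{G}(\Gamma)$; in particular $G/L$ carries a finite $G$-invariant measure, i.e. $L$ has finite covolume.

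It remains to see that $\Gamma$ is an irreducible lattice and that $\mu=\mu_{\Gamma}$. By the Borel density theorem (valid for finite-covolume closed subgroups since $\GG$ has no $k$-anisotropic factors), $L$ is Zariski dense; since $L$ normalises $\Gamma$, the Zariski closure $\overline{\Gamma}^{\,\mathrm{Zar}}$ is a normal algebraic subgroup of $\GG$, and if it were proper then a complementary factor would centralise $\Gamma$ and all its conjugates, hence act trivially on $\bigl(\SubG{G},\mu\bigr)$ -- impossible. So $\Gamma$ is Zariski dense, and consequently $\mathrm{Lie}(\Gamma)$, being invariant under $\mathrm{Ad}(\Gamma)$ hence under $\mathrm{Ad}(\GG)$, is an ideal $\mathrm{Lie}(\GG_{S})$ for a set $S$ of factors. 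If $S\neq\varnothing$ then $\Gamma\cap\GG_{S}$ is open in $\GG_{S}$, and since $L$ normalises it and has finite covolume, a short argument using that $\GG_{S}$ has no proper open subgroup of finite covolume forces $\GG_{S}\subseteq\Gamma$, whence $\GG_{S}$ acts trivially on $\bigl(\SubG{G},\mu\bigr)$ -- again impossible. Thus $\Gamma$ is discrete; being discrete and Zariski dense its centraliser is the finite centre, so $N_{G}(\Gamma)$ has trivial identity component and $L$ is discrete, hence a lattice. Now $\Gamma\trianglelefteq L$ is infinite, so by Margulis' normal subgroup theorem (available in this generality) $\Gamma$ has finite index in $L$ and is itself a lattice, necessarily irreducible because $\mu$ is; finally, since $G/\Gamma\to G/L$ has finite fibres, pushing the Haar probability measure forward through $g\Gamma\mapsto g\Gamma g^{-1}$ identifies $\mu$ with $\mu_{\Gamma}$, as required.

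The main obstacle I anticipate is the passage in the third paragraph from ``$L=N_{G}(\Gamma)$ has finite covolume'' to ``$\Gamma$ is a discrete, Zariski-dense subgroup'': over $\R$ one detects non-discreteness through the identity component, but over a general local field one must argue with the analytic Lie algebra $\mathrm{Lie}(\Gamma)$ and use the algebraic-group structure theory (no proper open finite-index subgroups of $\GG_{S}$, Borel density for finite-covolume subgroups) to exclude non-discrete behaviour. The remaining external inputs -- the Stuck--Zimmer theorem itself and Margulis' normal subgroup theorem over local fields -- are by now standard in this setting.
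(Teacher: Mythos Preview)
Your overall architecture matches the paper's proof exactly: apply the Stuck--Zimmer theorem to the conjugation action, observe that the properly ergodic case forces $\mu$ to be atomic, and in the essentially transitive case pass from the normalizer $L=N_G(\Gamma)$ (which has finite covolume) to the conclusion that $\Gamma$ is a lattice via the normal subgroup theorem. The only substantive divergence is in the step you yourself flag as the obstacle: showing that $L$ (equivalently $\Gamma$) is discrete.

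Your proposed fix---the analytic Lie algebra $\mathrm{Lie}(\Gamma)$---works when $k$ has characteristic zero, where closed subgroups of $k$-analytic groups have well-behaved Lie algebras and the argument ``$\mathrm{Lie}(\Gamma)$ is an ideal, hence $\Gamma$ contains an open subgroup of some factor $(\GG_S)_k$, hence all of it since $(\GG_S)_k^+$ has no proper open subgroups'' goes through. But over $k=\mathbb{F}_q((t))$ there is no Lie functor with the required properties: the group $\GG_k$ is not a $k$-analytic Lie group in any sense that would let you read off an open subgroup from a Lie subalgebra, and closed non-discrete subgroups need not contain anything algebraically recognisable. The paper makes exactly this point in the paragraph following the proof, noting that the Borel-density/Lie-algebra route used in \cite{abert2012growth} does not apply in positive characteristic.

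The paper's substitute is to invoke Margulis' theorem on cobounded subgroups \cite{margulis1977cobounded}: any closed subgroup $\Lambda\le G$ with $G/\Lambda$ of finite invariant volume contains a closed normal subgroup $N\lhd G$ such that the image of $\Lambda$ in $G/N$ is a lattice. Since $N\le\Lambda=N_G(\Gamma)$ lies in the kernel of the conjugation action, it must be central (hence finite), and so $\Lambda$ itself is a lattice. This replaces your entire third paragraph by a single black-box citation and works uniformly over all local fields. The remainder of your argument (normal subgroup theorem to pass from $\Lambda$ to $\Gamma$, identification of $\mu$ with $\mu_\Gamma$) then proceeds as you wrote it.
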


Indeed, a more general classification result is proved in \cite{abert2012growth} for reducible invariant random subgroups in real Lie groups, relying on the reducible case of the Stuck-Zimmer theorem, and is applied towards a study of asymptotic geometry of locally symmetric spaces. In \cite{gelevit} with Gelander we are able to extend some of the results of \cite{abert2012growth} concerning invariant random subgroups  to the non-Archimedean setting relying on the results of the present paper and in particular on Corollary \ref{cor:On IRS}.

We remark that contrary to Corollary \ref{cor:On IRS}, rank one groups admit a great variety of wild invariant random subgroups, see e.g. \cite{abert2012growth,bowen2012invariant}.

\subsection*{Proof strategy}

In the attempt to overcome the problem that was encountered in \cite{zimmer1982ergodic} we focus our attention on a certain Main Lemma (see Subsection \ref{sub:the main lemma}) which is a  key ingredient in the proof of Theorem \ref{thm:Intermediate-factor-theorem}. 

Measures algebras are a key player in the proof of the Main Lemma and are discussed in detail in Subsection \ref{sub:measure_algebras}. We regard a measure algebra as a topological space with the convergence in measure topology, see Subsection \ref{sub:convergence in measure}. It turns out that a sub-algebra is closed in this topology.

We study the measure algebra $\malg{Y}$ of the intermediate factor $Y$ and decompose it as a direct integral $\malg{Y}(x)$ over $x \in X$ where every $\malg{Y}(x)$ is a measure sub-algebra of $\malg{G/P}$. We develop a machinery that allows us to claim, given elements $C_n \in \malg{Y}(x_n)$ tending to $C \in \malg{Z}$ in the convergence in measure topology and $x_n \in X$ "tending" to $x \in X$ in a certain special sense, that $C \in \malg{Y}(x)$. This is done by introducing in Subsection \ref{sub:Effros-Borel-space} the Effros Borel structure on the space of all measure sub-algebras of $\malg{Y}$. We refrain however from defining a topology corresponding to this "convergence" of $x_n$ towards $x$; see Lemma \ref{lem:Effros convergence weaker form} for the exact statement.

The remainder of the proof of Theorem \ref{thm:Intermediate-factor-theorem} as well as the proof of Theorem \ref{thm:stuck-zimmer-over-local-fields} are essentially the same as in the original treatments and are only recalled here for the reader's convenience.

Finally in Section \ref{sec:non simply connected and the real case} we treat the case where the group $\GG$ is not simply-connected and prove Theorems \ref{thm:IFT-general form} and \ref{thm:SZ - general form} which are the generalizations of our two main results to this case. A certain subgroup $\GG_k^+$ plays an important role here and introduces several complications, see Subsection \ref{sub:the group G+} for more details. While this last section can be viewed as supplementary, it is required to deduce the real Lie group variants of our main results as in explained in Subsection \ref{sub:the real case}.

\vline{}

Theorems \ref{thm:Intermediate-factor-theorem} and \ref{thm:stuck-zimmer-over-local-fields} are proved in Sections \ref{sub:proof of ift} and \ref{sec:The-Stuck-Zimmer-theorem}, respectively. 

Invariant random subgroup rigidity and Corollary \ref{cor:On IRS} are discussed in Section \ref{sec:invariant random subgroup rigidity}.

\subsection*{Acknowledgments} I would like to thank Uri Bader, Tsachik Gelander, Shahar Mozes and Amos Nevo for many helpful discussions. I would like to thank the anonymous referee for his careful reading of this paper as well as for  his many suggestions and comments that without doubt were a major contribution in improving upon an earlier version. 

\section{$k$-algebraic groups and their actions}
\label{sec:k-groups-and-their-actions}

We recall for the reader's convenience some facts and notations regarding probability measure preserving group actions as well as the structure theory of $k$-algebraic groups.

In what follows $k$ denotes an arbitrary local field.

\subsection{Measurable group actions}
\label{sub:measurable_group_actions}

Let $G$ be second countable locally compact group. 
A \emph{$G$-space} is a standard
$\sigma$-finite Borel measure space $\left(X,\mu\right)$ with a
Borel measurable action $G\times X\to X$ such that $\mu$ is quasi-invariant.
A \emph{$G$-map} is a Borel map $f$ of $G$-spaces $f : (X,\mu)  \to (Y,\eta)$ that is $G$-equivariant and  satisfies $f_* \mu = \eta$.

Consider a given $G$-space $X$ and the associated $G$-action. 
The action is
\emph{ergodic} if every measurable $G$-invariant subset of $X$ is
either null or conull. It is \emph{essentially transitive} is there
exists a conull orbit. Clearly an essentially transitive action is
ergodic. An action is \emph{properly ergodic} if it is ergodic and not
essentially transitive, or equivalently every $G$-orbit has measure
$0$. An ergodic action is \emph{irreducible} if every non-central
normal subgroup acts ergodically. The action is \emph{faithful} if
for every non-trivial element $g\in G$, $gx\neq x$ holds for $x\in X$ of positive measure.
Finally, the action is \emph{essentially free} if $\mu$-a.e. $x\in X$
has trivial stabilizer in $G$.

\subsection{Subgroup structure of semisimple $k$-groups}
\label{sub:subgroup_structure}

An algebraic group $\GG$ is said to be \textit{defined over $k$}, or simply a \textit{$k$-group}, if the underlying variety as well as the group multiplication and inverse maps are defined over $k$ (see 0.10 of \citep{margulis1991discrete} or \cite{borel1991linear} for details).

Consider a connected semisimple $k$-group $\GG$ 
and let $G=\GG_{k}$ denote its group of $k$-points. $G$ is naturally a locally compact second countable topological group (see 3.1 of \citep{platonov1994algebraic}) so that the definitions of Subsection \ref{sub:measurable_group_actions} are applicable to actions of $G$. In fact,  in  characteristic zero $G$  has in addition an analytic structure.

$\GG$ is said to be \textit{$k$-anisotropic} if $\rank{\GG}{k}=0$, and this is the case if and only if $G$ is compact (see 3.1 of \citep{platonov1994algebraic}). 
$\GG$ is said to be \textit{simply-connected} if every central isogeny from a connected algebraic group into $\GG$ is an algebraic group isomorphism. For example,  $\textrm{SL}_n$ is simply-connected.

We recall the terminology from \citep{margulis1991discrete,margulis1978quotient} regarding several subgroups of particular interest. 
Let $\SS$ be a maximal $k$-split torus in $\GG$ and $\PP$
be a minimal parabolic $k$-subgroup containing $\SS$. We choose
an ordering on the set of roots $\Phi=\Phi\left(\SS,\GG\right)$ which
corresponds to $\PP$ and denote by $\Delta$ the set of simple roots
with respect to this ordering. 

For every subset $\theta\subset\Delta$
we consider (see I.1.2 of \citep{margulis1991discrete}): 
\begin{itemize}
\item The torus $\SS_{\theta} \subset \SS$ which is the connected component of the
intersection of the kernels of all $\alpha\in\theta$.
\item The standard parabolic $k$-subgroup $\PP_{\theta}$ corresponding to
$\theta$ and defined by the condition that it contains $\PP$ and
that $Z_{\GG}\left(\SS_{\theta}\right)$ is the reductive Levi component
of $\PP_{\theta}$.
\item The opposite parabolic $\overline{\PP}_{\theta}$ such that $\PP_{\theta}\cap\overline{\PP}_{\theta}$
is the Levi $Z_{\GG}\left(\SS_{\theta}\right)$.
\item The unipotent radicals $\VV_{\theta}=R_{u}\left(\PP_{\theta}\right)$
and $\overline{\VV}_{\theta}=R_{u}\left(\overline{\PP}_{\theta}\right)$.
\end{itemize}
Note that $\PP=\PP_{\emptyset}$ and $\GG=\PP_{\Delta}$. In fact, the parabolic group $\PP_\theta$ is generated by its subgroups $\PP_b$ for $b \in \theta$.
We denote the corresponding groups of $k$-points
by
\[
P_{\theta}=\left(\PP_{\theta}\right)_{k},\,\overline{P}_{\theta}=\left(\overline{\PP}_{\theta}\right)_{k},\, S_{\theta}=\left(\SS_{\theta}\right)_{k},\, V_{\theta}=\left(\VV_{\theta}\right)_{k},\,\overline{V}_{\theta}=\left(\overline{\VV}_{\theta}\right)_{k}
\]
 groups without subscripts ($P$,$V$ etc.) will refer to $\theta=\emptyset$.
For each $\theta$ write $\overline{L}_{\theta}=P_{\theta}\cap\overline{V}$.
Then $\overline{V}$ decomposes as $\overline{V}=\overline{L}_{\theta}\ltimes\overline{V}_{\theta}$.

\begin{lem}
\label{lem:relations of V P and G}Let $\theta\subset\Delta$ be a collection of simple roots. Then
the map $\varepsilon:\overline{V}_{\theta}\times P_{\theta}\to G$
sending $\left(v,p\right)\in\overline{V}_{\theta}\times P_{\theta}$
to $vp^{-1}\in G$ is a homeomorphism onto its image $\overline{V}_{\theta}\cdot P_{\theta}$,
which is open in $G$. Moreover $\overline{V}_{\theta}\cdot P_{\theta}$
is conull in $G$ and the Haar measure of $G$ is in the same measure
class as the push-forward of the product measure on $\overline{V}_{\theta}\times P_{\theta}$
by $\varepsilon$.
\end{lem}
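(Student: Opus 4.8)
The plan is to deduce the lemma from the purely algebro-geometric fact that the ``big cell'' $\overline{\VV}_{\theta}\cdot\PP_{\theta}$ is a Zariski-open subvariety of $\GG$ on which the product morphism is a direct-product decomposition, and then to transport this statement to $k$-points and to Haar measures. So I would first record the algebraic input. The multiplication morphism $m\colon\overline{\VV}_{\theta}\times\PP_{\theta}\to\GG$ is injective on points: $\overline{\VV}_{\theta}\cap\PP_{\theta}\subseteq\overline{\PP}_{\theta}\cap\PP_{\theta}=Z_{\GG}(\SS_{\theta})$, while in the Levi decomposition $\overline{\PP}_{\theta}=Z_{\GG}(\SS_{\theta})\ltimes\overline{\VV}_{\theta}$ the two factors meet trivially, whence $\overline{\VV}_{\theta}\cap\PP_{\theta}=\{e\}$. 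At the identity the differential of $m$ is an isomorphism, since the $\SS$-weight space decomposition of $\mathfrak{g}$ (valid in every characteristic for reductive groups) gives $\mathfrak{g}=\mathrm{Lie}(\overline{\VV}_{\theta})\oplus\mathrm{Lie}(\PP_{\theta})$; translating on both sides, $m$ is étale everywhere, hence an open immersion onto an open subvariety, which as a set is exactly $\overline{\VV}_{\theta}\cdot\PP_{\theta}$. Since $\GG$ is connected, the complement $\GG\setminus(\overline{\VV}_{\theta}\cdot\PP_{\theta})$ is a proper Zariski-closed subset, of strictly smaller dimension. As $\varepsilon$ is $m$ precomposed with the isomorphism $\mathrm{id}\times\mathrm{inv}$ of varieties, it has the same properties. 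This part is classical and I would cite a Bruhat-type decomposition (e.g. Borel--Tits, or I.1.2 of Margulis).

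Next I would pass to $k$-points. Taking $k$-points, $\varepsilon$ induces a bijection $\overline{V}_{\theta}\times P_{\theta}\to(\overline{\VV}_{\theta}\cdot\PP_{\theta})_{k}$, and since $\varepsilon$ is an isomorphism of $k$-varieties this bijection is a homeomorphism for the topologies induced by $k$ (an isomorphism of $k$-varieties induces a $k$-analytic isomorphism, in particular a homeomorphism, on $k$-points). Because $\overline{\VV}_{\theta}\cdot\PP_{\theta}$ is Zariski-open in $\GG$, its set of $k$-points is open in $G=\GG_{k}$ in the local-field topology, and surjectivity of $\varepsilon$ on $k$-points identifies it with $\overline{V}_{\theta}\cdot P_{\theta}$. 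This establishes the first assertion of the lemma.

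For the measure assertions: the $k$-points of the proper closed subvariety $\GG\setminus(\overline{\VV}_{\theta}\cdot\PP_{\theta})$ form a Haar-null subset of $G$ --- in analytic coordinates this set lies inside the zero locus of a nonzero polynomial, which is Lebesgue-null, and Haar measure on $G$ lies locally in the Lebesgue class --- so $\overline{V}_{\theta}\cdot P_{\theta}$ is conull in $G$. Finally, transporting $\mathrm{Haar}_{G}$, restricted to the conull open big cell, through the $k$-analytic isomorphism $\varepsilon$ yields a measure on $\overline{V}_{\theta}\times P_{\theta}$ which in analytic charts has a continuous nowhere-vanishing density, because left Haar measure on a $k$-algebraic group is given in charts by such a density (equivalently by a left-invariant top-degree form). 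The product measure $\mu_{\overline{V}_{\theta}}\times\mu_{P_{\theta}}$ (product of the two Haar measures) is of the same type, and two measures with continuous positive densities on a $k$-analytic manifold are mutually absolutely continuous; this gives the claimed equality of measure classes.

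The one place I would be most careful --- and the main obstacle --- is that every step must hold uniformly over all local fields, including those of positive characteristic, where one cannot invoke real or complex Lie theory. This forces Step~1 to be phrased scheme-theoretically (étale morphisms, the weight-space decomposition of $\mathfrak{g}$ for reductive groups in arbitrary characteristic) and requires the facts that the $k$-points of a smooth $k$-variety carry a natural $k$-analytic manifold structure and that $k$-morphisms induce $k$-analytic maps; once that framework is in place, the bookkeeping with densities and measure classes in the last step is routine.
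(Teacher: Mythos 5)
Your proof is correct; the paper itself gives no argument and simply cites Lemma~IV.2.2 of Margulis's book, so you have reconstructed what that reference contains rather than taken a genuinely different route. A few remarks on the places where you yourself flagged caution. Your scheme-theoretic phrasing of Step~1 is the right one for arbitrary local fields: the $\SS$-weight-space decomposition $\mathfrak{g}=\mathrm{Lie}(\overline{\VV}_{\theta})\oplus\mathrm{Lie}(\PP_{\theta})$ does hold in all characteristics for reductive $\GG$, and an \'etale monomorphism of finite-type $k$-schemes is an open immersion, so the multiplication map is an isomorphism onto the Zariski-open big cell; this is exactly the Bruhat/big-cell input one also finds in Borel--Tits or SGA3. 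Passing to $k$-points of an isomorphism of smooth $k$-varieties does give a homeomorphism in the $k$-analytic topology, and the $k$-points of a proper Zariski-closed subvariety are Haar-null since Haar measure on a smooth $k$-group is locally given by $|\omega|$ for a left-invariant top-degree form $\omega$ (this works in positive characteristic too, because $\Omega^{n}_{\GG/k}$ is a line bundle trivialised by left translation, even though other aspects of differential calculus degenerate there). Your final step --- that two measures with continuous nowhere-vanishing densities in $k$-analytic charts are mutually absolutely continuous --- is exactly what is needed; an alternative and slightly more structural way to finish, closer to Margulis's style, is to observe directly that the pushforward of $\mu_{\overline{V}_{\theta}}\times\mu_{P_{\theta}}$ under $\varepsilon$ is left-$\overline{V}_{\theta}$-quasi-invariant and right-$P_{\theta}$-quasi-invariant with continuous Radon--Nikodym cocycle, which forces it into the Haar class on the big cell; but your density argument is equally valid and arguably cleaner.
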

\begin{proof}
This is Lemma IV.2.2 of \citep{margulis1991discrete}.
\end{proof}

Lemma \ref{lem:relations of V P and G} implies that as $G$-spaces
we have $\nicefrac{G}{P_{\theta}}\cong\overline{V}_{\theta}$, and
that the natural map $\nicefrac{G}{P}\to\nicefrac{G}{P_{\theta}}$
corresponds under this isomorphism to the projection $\overline{V}\to\overline{V}_{\theta}$
with respect to the decomposition $\overline{V}=\overline{V}_{\theta}\rtimes\overline{L}_{\theta}$. 

In particular $\overline{V}\cong\nicefrac{G}{P}$ as Borel $G$-spaces.
For certain elements of $G$ it is easy to describe the $G$-action on $\overline{V}$ explicitly.
Let $u\in\overline{V}\cong\nicefrac{G}{P}$. Observe that if $v\in\overline{V}\subset G$
then $v.u=vu$. Similarly if $s\in S\subset G$ then $s.u=sus^{-1}$.

\subsubsection*{An example of $\GG, \PP, \SS$, etc.} 
\label{sub:an example of linear algebraic groups}

For the convenience of the reader we present a concrete and standard example of an algebraic group $\GG$ satisfying the requirements of Theorem \ref{thm:Intermediate-factor-theorem} as well as its subgroups $\PP, \SS$ etc. introduced above. This example is treated in detail in Section II.3 of \cite{margulis1991discrete}.

Let $\GG$ be the algebraic group $\SL{n}$ for some $n \in \N, n \ge 2$. Here $\SL{n}$ is the subgroup of $\GL{n}$ defined by the vanishing of the polynomial equation $\det - 1 = 0$. Note that $\SL{n}$ is defined over the field $\QQ$.

As a maximal split torus of $\SL{n}$ we may take the diagonal subgroup $\SS$
$$ \SS = \{ s \in \mathrm{diag}(s_1, \ldots, s_n) \: : \: s_1 s_2 \cdots s_n = 1 \} $$
So $\SS$ is isomorphic to the product of $n-1$ copies of the multiplicative group.  The roots $\Phi = \Psi(\SS,\GG)$ are given by
$$ \Phi = \{ \alpha_{i,j} \: : \: 1 \le i,j \le n, \; i \neq j \} $$
where the root $\alpha_{i,j}$ is defined by the formula
$ \alpha_{i,j}(s) = s_i s_j^{-1} $.

There is a standard root ordering corresponding to the choice $\Delta = \{\alpha_{i,i+1}\}_{i=1,\ldots,n-1} $ of simple roots. The corresponding parabolic subgroup $\PP$ is the group of upper-triangular matrices of determinant one. The subgroup $\VV$ is the group of unipotent upper-triangular matrices. 

It remains to discuss the groups associated to choices of subsets $\theta  \subset \Delta$ of simple roots. Let us describe explicitly the case where $\theta = \{\alpha_{1,2}\}$, the general situation being similar. Then $\SS_\theta$ consists of diagonal matrices  subject to the condition that $s_1 = s_2$.  The parabolic subgroup $\PP_\theta$ consists of elements of $\SL{n}$ that vanish below the diagonal, except possibly for the $(2,1)$-th entry. $\VV_\theta$ consists of unipotent upper triangular matrices that additionally vanish in the $(1,2)$-th entry. Then $\VV = \VV_\theta \rtimes \mathbb{L}_\theta $ where $\mathbb{L}_\theta$ consists of unipotent matrices with only the $(1,2)$-th coordinate non-zero outside the diagonal. Finally the groups $\overline{\PP}_\theta, \overline{\VV}_\theta$ and $\overline{\mathbb{L}}_\theta$ are nothing but the transpose of $\PP_\theta, \VV_\theta$ and $\mathbb{L}_\theta$, respectively.

\subsection{Contracting automorphisms}
\label{sub:contracting_automorphisms}

Recall that a \emph{contraction} of a topological group is an automorphism
$\varphi$ such that for every compact set $K\subset G$ and every
neighborhood of the identity $U$ there exists $n\in\N$ such that
$\varphi^{n}\left(K\right)\subset U$. 

In the situation of \ref{sub:subgroup_structure}, given a subset $\theta\subset\Delta$ we can find an element $s\in S_{\theta}$ such that $\mathrm{Inn}\left(s\right)$
is contracting on $V_{\theta}$ and is the identity of
$L_{\theta}$ (see II.3.1 in \citep{margulis1991discrete}).
Note that $\mathrm{Inn}\left(s\right)$ is a non-trivial automorphism
of $V$ provided that $\theta\subsetneq\Delta$.


\begin{defn}
\label{def:psi_theta}
Given a collection $\theta\subset\Delta$ of simple roots denote
\[
\psi_{\theta}\left(E\right)=\overline{V}_{\theta}\left(E\cap\overline{L}_{\theta}\right)
\]
where $E\subset\overline{V}$ is a measurable subset.
\end{defn}

So $\psi_\theta(E)$ is a subset of $\overline{V}$ and is always a union of certain $\overline{V}_\theta$-cosets. For example, $\psi_{\emptyset}\left(E\right)$ is either $\overline{V}$
or $\emptyset$ depending on whether $e\in E$ while $\psi_{\Delta}\left(E\right)=E$.
The following lemma (see IV.2.5 in \citep{margulis1978quotient} or
8.2.8 in \citep{zimmer1984ergodic}) controls the image of measurable
subsets of $\overline{V}$ under iterations of $\mathrm{Inn}\left(s\right)$.
\begin{lem}
\label{lem:contraction} Let $\theta \subset \Delta$ be a collection of simple roots and $s\in S_{\theta}$ an element such that $\mathrm{Inn}(s)$ is contracting on $V_\theta$. If $C\subset\overline{V}$ is a measurable subset then for almost
every $u\in\overline{V}$ we have
\[
\mathrm{Inn}\left(s^{n}\right)\left(uC\right)=s^{n} uC  s^{-n}\longrightarrow\psi_{\theta}\left(uC\right)
\]
where the convergence is understood in measure.
\end{lem}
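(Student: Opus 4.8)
The plan is to reduce the claim to the Lebesgue differentiation theorem on the unipotent group $\overline{V}_{\theta}$, via the decomposition $\overline{V}=\overline{V}_{\theta}\rtimes\overline{L}_{\theta}$ of Lemma \ref{lem:relations of V P and G}; we may assume $\theta\subsetneq\Delta$, the case $\theta=\Delta$ being trivial. Since $s\in S_{\theta}$, the automorphism $\mathrm{Inn}(s)$ fixes $\overline{L}_{\theta}$ pointwise and normalises $\overline{V}_{\theta}$; write $\varphi$ for its restriction to $\overline{V}_{\theta}$. Since $\mathrm{Inn}(s)$ contracts $V_{\theta}$, the root-space description of $\mathrm{Inn}(s)$ shows that the inverse automorphism $\varphi^{-1}$ is a contraction of $\overline{V}_{\theta}$ (the corresponding statement for the opposite parabolic; compare Subsection \ref{sub:contracting_automorphisms}). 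Under the Borel isomorphism $\overline{V}\cong\overline{V}_{\theta}\times\overline{L}_{\theta}$ the transformation $\mathrm{Inn}(s^{n})$ acts by $(v,\ell)\mapsto(\varphi^{n}(v),\ell)$, and writing $A_{\ell}=\{v\in\overline{V}_{\theta}:v\ell\in A\}$ for the $\ell$-fibre of a measurable $A\subseteq\overline{V}$, one has $\mathrm{Inn}(s^{n})(A)_{\ell}=\varphi^{n}(A_{\ell})$, while, directly from Definition \ref{def:psi_theta}, $\psi_{\theta}(A)_{\ell}$ equals $\overline{V}_{\theta}$ when $e\in A_{\ell}$ and is empty otherwise. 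By Lemma \ref{lem:relations of V P and G} the finite measure on $\overline{V}\cong\nicefrac{G}{P}$ disintegrates over $\overline{L}_{\theta}$ with fibre measures equivalent to Haar on $\overline{V}_{\theta}$, and convergence in measure is insensitive to replacing a measure by an equivalent finite one, so it suffices to prove the fibrewise convergence $\varphi^{n}\big((uC)_{\ell}\big)\to\psi_{\theta}(uC)_{\ell}$ for almost every $u$ and almost every $\ell$; dominated convergence over $\ell$ then gives the assertion.

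The fibrewise step uses only the following elementary consequence of $\varphi^{-1}$ being a contraction. Fix a compact neighbourhood $B\ni e$ in $\overline{V}_{\theta}$ with $\varphi^{-1}(B)\subseteq B$ (possible since $\varphi^{-1}$ is a contraction), so that $\{\varphi^{-n}(B)\}_{n}$ is a neighbourhood basis of $e$ and $\bigcup_{n}\varphi^{n}(B)=\overline{V}_{\theta}$. If $F\subseteq\overline{V}_{\theta}$ is measurable and $m\big(F\cap\varphi^{-n}(B)\big)/m\big(\varphi^{-n}(B)\big)\to 1$ (Haar measure), then $\varphi^{n}(F)\to\overline{V}_{\theta}$ in measure; applying this to $F^{c}$ and using $\varphi^{n}(F^{c})=\overline{V}_{\theta}\setminus\varphi^{n}(F)$ one gets $\varphi^{n}(F)\to\emptyset$ in measure whenever $e$ is a density point of $F^{c}$. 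The proof is a routine exhaustion: on the fixed compact set $\varphi^{N}(B)$ we have $\varphi^{n}(F)\cap\varphi^{N}(B)=\varphi^{N}\big(\varphi^{n-N}(F)\cap B\big)$, whose relative Haar measure equals $m\big(F\cap\varphi^{-(n-N)}(B)\big)/m\big(\varphi^{-(n-N)}(B)\big)\to 1$; this transfers to the finite equivalent measure on $\varphi^{N}(B)$ by absolute continuity, and the complement of $\varphi^{N}(B)$ contributes arbitrarily little.

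It remains to check that for almost every $u$ and almost every $\ell$ the identity $e$ is a Lebesgue point of $(uC)_{\ell}$, meaning that it is a density point of $(uC)_{\ell}$ or of its complement and that it lies in $(uC)_{\ell}$ exactly when it is a density point of it, so that the previous step delivers the limit $\psi_{\theta}(uC)_{\ell}$. Writing $u=v_{0}\ell_{0}$ with $v_{0}\in\overline{V}_{\theta}$ and $\ell_{0}\in\overline{L}_{\theta}$, a short computation in the semidirect product gives $(uC)_{\ell}=v_{0}\cdot\mathrm{Inn}(\ell_{0})\big(C_{\ell_{0}^{-1}\ell}\big)$. Left translations and the automorphism $\mathrm{Inn}(\ell_{0}^{-1})$ preserve Haar measure on $\overline{V}_{\theta}$ (the latter because $\ell_{0}$ is unipotent, so $\mathrm{Ad}(\ell_{0})$ has determinant one), whence $e$ is a Lebesgue point of $(uC)_{\ell}$ relative to $\{\varphi^{-n}(B)\}$ if and only if $\mathrm{Inn}(\ell_{0}^{-1})(v_{0}^{-1})$ is a Lebesgue point of $C_{\ell_{0}^{-1}\ell}$ relative to the basis $\{\mathrm{Inn}(\ell_{0}^{-1})\varphi^{-n}(B)\}$. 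For each fixed $\ell_{0}$ the latter is again a nested compact neighbourhood basis of $e$ whose left translates form a differentiation basis on $\overline{V}_{\theta}$ --- a martingale filtration by cosets of shrinking compact open subgroups when $k$ is non-archimedean (take $B$ such a subgroup), and a regular Vitali system when $k$ is archimedean, since $\varphi$ is a fixed linear automorphism --- so, by the Lebesgue differentiation theorem (respectively the martingale convergence theorem), almost every point of $\overline{V}_{\theta}$ is a Lebesgue point of any given measurable set. A Fubini argument in the variables $v_{0},\ell_{0},\ell$, combined with the measure-preserving change of variables just described, then produces the required almost-everywhere statement.

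I expect this last step to be the main obstacle: one must carry out carefully the change of variables and the Fubini and measurability bookkeeping that turn the pointwise differentiation theorem on $\overline{V}_{\theta}$ into the ``almost every $u$'' conclusion, and one must invoke the appropriate differentiation theorem for the archimedean and non-archimedean cases separately --- the non-archimedean case, where everything reduces to nested compact open subgroups, being by far the more transparent.
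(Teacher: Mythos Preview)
The paper does not give its own proof of this lemma; it merely cites Margulis (IV.2.5 of \cite{margulis1978quotient}) and Zimmer (8.2.8 of \cite{zimmer1984ergodic}), adding only the remark that the argument hinges on $\mathrm{Inn}(s)$ being \emph{expanding} on $\overline{V}_\theta$. Your proposal is precisely the standard proof found in those references: decompose $\overline{V}=\overline{V}_\theta\rtimes\overline{L}_\theta$, observe that $\mathrm{Inn}(s)$ is trivial on the $\overline{L}_\theta$-factor and expanding on the $\overline{V}_\theta$-factor, and reduce to a Lebesgue density point statement on $\overline{V}_\theta$ via Fubini and the change of variables $(uC)_\ell = v_0\cdot\mathrm{Inn}(\ell_0)(C_{\ell_0^{-1}\ell})$.

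Two minor remarks that would tighten the write-up. First, since $\mathrm{Inn}(s)$ fixes $\overline{L}_\theta$ pointwise (as you note), $s$ and $\ell_0$ commute, so $\mathrm{Inn}(\ell_0^{-1})\varphi^{-n}(B)=\varphi^{-n}\big(\mathrm{Inn}(\ell_0^{-1})(B)\big)$; the $\ell_0$-dependent differentiation basis is therefore just the $\varphi^{-n}$-iterates of a single compact neighbourhood, which simplifies the verification that it is a good basis. Second, the assertion that $\mathrm{Inn}(\ell_0)$ preserves Haar on $\overline{V}_\theta$ is cleanest argued via unimodularity of the three groups in the semidirect product $\overline{V}=\overline{V}_\theta\rtimes\overline{L}_\theta$, which avoids Lie-algebra language in positive characteristic. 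With these points made explicit, the sketch is complete and matches the cited proofs.
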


Convergence in measure is discussed in Subsection \ref{sub:convergence in measure} below. We remark that the proof actually relies on the fact that $\mathrm{Inn}(s)$ is \emph{expanding} (i.e. is an inverse of contracting) on $\overline{V}_\theta$.

\subsection{Mautner's lemma}
\label{sub:Mautner's lemma}

Let $\GG$ be a simply-connected semisimple algebraic $k$-group  as in the statement of Theorem \ref{thm:Intermediate-factor-theorem}. The following is a well-known and immediate corollary
of Mautner's lemma.

\begin{lem}
	\label{lem:cor of Mautner lemma}Let $X$ be an irreducible
	$\GG_k$-space with finite $\GG_k$-invariant measure, and let $s\in S$ be
	such that $\text{Inn}\left(s\right)$ is contracting on some
	$V_{\theta}$ with $\theta \subsetneq \Delta$  Then the action of $s$ on $X$ is ergodic.
\end{lem}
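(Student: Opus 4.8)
The plan is to deduce this from the Mautner phenomenon together with the irreducibility hypothesis on $X$. First I would recall what Mautner's lemma gives in this setting: if $s \in S$ acts on the finite-measure-preserving $\GG_k$-space $X$ and $u \in G$ is an element that is contracted by $\mathrm{Inn}(s)$ (meaning $s^n u s^{-n} \to e$), then any $s$-invariant $L^2$-function on $X$ is automatically $u$-invariant. Hence the subgroup $H$ generated by all elements contracted by $\mathrm{Inn}(s)$ (together with $s$ itself, and — applying the same argument to $s^{-1}$ — all elements contracted by $\mathrm{Inn}(s^{-1})$) fixes every $s$-invariant vector in $L^2(X)$. Since $\mathrm{Inn}(s)$ is contracting on $V_\theta$ for some $\theta \subsetneq \Delta$, the group $H$ contains $V_\theta$; applying Mautner to $s^{-1}$, which is then expanding on $V_\theta$ and hence contracting on $\overline{V}_\theta$, shows $H$ also contains $\overline{V}_\theta$.

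Next I would identify the subgroup of $\GG$ generated by $\VV_\theta$ and $\overline{\VV}_\theta$. As long as $\theta \subsetneq \Delta$, these unipotent radicals are nontrivial, and the subgroup they generate is a normal (indeed, the whole group, using simple-connectedness and the absence of $k$-anisotropic factors, via the structure theory recalled in Subsection \ref{sub:subgroup_structure}) — more carefully, $\langle \VV_\theta, \overline{\VV}_\theta \rangle$ is a semisimple normal $k$-subgroup of $\GG$ whose $k$-points act ergodically on $X$ by the irreducibility hypothesis, since it is noncentral. This is the step where the hypotheses "irreducible", "simply-connected", and "no $k$-anisotropic almost $k$-simple subgroups" all get used: they guarantee that the normal subgroup generated by the relevant unipotents is large enough to act ergodically. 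Concretely one notes that a normal subgroup of $\GG$ generated by nontrivial unipotent subgroups cannot be central, so by irreducibility its group of $k$-points acts ergodically on $X$.

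Finally, combining the two: any $s$-invariant function in $L^2(X)$ is invariant under all of $H \supseteq \langle V_\theta, \overline{V}_\theta \rangle$, hence invariant under a noncentral normal subgroup $N_k$ acting ergodically, hence constant. Therefore $s$ acts ergodically on $X$. The main obstacle I anticipate is the bookkeeping in the second step: verifying that the normal subgroup of $\GG_k$ generated by $V_\theta \cup \overline{V}_\theta$ is genuinely noncentral (so that irreducibility applies) when $\theta$ is only assumed to be a proper subset of $\Delta$ — this requires knowing that each simple root not in $\theta$ contributes a nontrivial root subgroup to $V_\theta$, and that over a general local field these root groups still generate a normal $k$-subgroup with the expected semisimple structure; all of this is standard but is the place where care over arbitrary local fields (as opposed to $\R$) is needed, and is exactly where one invokes the structure theory of Subsection \ref{sub:subgroup_structure} rather than reproving it.
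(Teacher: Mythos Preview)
The paper does not give a proof of this lemma; it simply refers to Margulis \cite[II.3.3]{margulis1991discrete} (and to \cite{bader2014equicontinuous}). Your sketch is a correct outline of the standard Mautner argument one finds there, and you have correctly located the only nontrivial step.

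One small correction to your second step: the subgroup $\langle V_\theta, \overline{V}_\theta\rangle$ is not \emph{a priori} normal in $G$, so invariance under it does not directly feed into the irreducibility hypothesis. The way this is handled in Margulis is by a bootstrap: from invariance under $U_\alpha$ and $U_{-\alpha}$ for each root $\alpha$ with $|\alpha(s)|\neq 1$ one obtains invariance under the rank-one subgroup $\langle U_\alpha,U_{-\alpha}\rangle$, which contains nontrivial torus elements; these in turn contract further root groups, and iterating one arrives at invariance under $\HH_k^+$ for a product $\HH$ of almost $k$-simple factors of $\GG$ (this is exactly the statement recorded later in the paper as Lemma~\ref{lem:cor of Mautner lemma rel}). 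Under the standing hypotheses that $\GG$ is simply-connected with no $k$-anisotropic factors one has $\HH_k^+=\HH_k$, a genuine noncentral normal subgroup of $\GG_k$, and now irreducibility applies as you say. So your plan is right in spirit; just replace ``$\langle V_\theta,\overline{V}_\theta\rangle$ is normal'' by the bootstrap that produces $\HH_k^+$.
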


For the proof see for example II.3.3 of \citep{margulis1991discrete}. A recent treatment of Mautner's lemma is given in \cite{bader2014equicontinuous}.

\section{Measure algebras and the Effros Borel structure}
\label{sec:measure algebras}

A central tool in the proofs of the factor and the intermediate factor theorems is that of measure algebras. Indeed, both theorems admit an equivalent formulation in that language. The restatement relies on Mackey's point realization and its extensions to $G$-spaces.

Therefore Section \ref{sec:measure algebras} is dedicated to measure algebras, in particular those that arise in certain product spaces. We consider the Effros Borel structure on the collection of measure sub-algebras of a given algebra. This depends on regarding a measure algebra with the convergence in measure topology and observing that a sub-algebra becomes a closed subspace.

To be concrete, we will be interested in understanding the measure algebras associated to the following situation. Assume that three $G$-spaces $X,Y$ and $Z$ are given as well as a pair of $G$-maps  $f:X\times Z\to Y$ and $g:Y\to X$ whose composition is the projection.
For the purposes of Theorem \ref{thm:Intermediate-factor-theorem} we will only require
the case $Z=\nicefrac{G}{P}$, but presently we discuss the general case. 

One main goal of the current section is to prove Theorem \ref{thm:iota is measurable} below.

\subsection{Measure algebras} \label{sub:measure_algebras}

Recall that the spaces $X,Y$ and $Z$ are assumed to be standard Borel, and let $\eta$ be a probability measure on $X$.

\begin{defn}
\label{def:measure algebra}
The \emph{measure algebra} $\malg{X}$ is the Boolean algebra of all measurable
subsets up to almost everywhere equivalence.
\end{defn}

In other words  $\malg{X}$ is just the $\sigma$-algebra of $X$ modulo the ideal of $\eta$-null sets. The algebra $\malg{X}$ clearly depends on the class of $\eta$.

In the situation under consideration we have a pair of $G$-maps 
$$ X \times Z \xrightarrow{f} Y \xrightarrow{g} X $$ 
Passing to measure algebras we obtain the reverse inclusions
$$\malg{X} \subset \malg{Y}\subset \malg{X\times Z} $$
We would like to decompose the intermediate measure algebra $\malg{Y}$ over $X$ in such a way that the fiber over every $x \in X$ becomes a sub-measure algebra of $\malg{Z}$. 

\subsubsection*{Abelian von-Neumann algebras}
Working towards such a decomposition we discuss an equivalent realization of measure algebras, as following.
Consider the Hilbert space $L^2(X,\eta)$ and denote
$$\vnalg{X} = L^\infty(X,\eta) \subset \mathcal{L}(L^2(X,\eta))$$
where the element of $\vnalg{X}$ represented by the function $f \in L^\infty(X,\eta)$ is regarded as a bounded operator on $L^2(X,\eta)$ corresponding to multiplication by $f$. In fact $\vnalg{X}$ is an \emph{abelian von-Neumann algebra} and $\malg{X}$ is isomorphic as a Boolean algebra to the sub-algebra of (orthogonal) projections in $\vnalg{X}$. See Chapter I.7 of \cite{dixmier} for more information regarding abelian von-Neumann algebras.

\subsubsection*{Direct integral decomposition}

The abelian von-Neumann algebras corresponding to the $G$-spaces $X, Y$ and $Z$ stand in the following relation
$$\vnalg{X} \subset \vnalg{Y}\subset \vnalg{X\times Z}  $$
There is a theory of direct integrals of von-Neumann algebras, as exposed e.g. in part II of \cite{dixmier}. In particular we may write
$$ \vnalg{X\times Z} \cong \vnalg{X} \otimes \vnalg{Z} \cong \int^\oplus \vnalg{Z} \; d\eta(x) $$
Note that under this identification $\vnalg{X}$ corresponds to the diagonalizable operators\footnote{An operator on the Hilbert space $\int^ \oplus \mathcal{H}(x) \, d\mu(x) $ is \emph{diagonalizable} if it corresponds to point-wise multiplication by a function in $L^\infty(X,\mu).$} in the direct integral decomposition on the right hand side. Using the fact that $\vnalg{X} \subset \vnalg{Y}$ and according to Theorem 2 on p. 198 of \cite{dixmier} we obtain a decomposition
$$ \vnalg{Y} = \int^\oplus \vnalg{Y}(x) \; d\eta(x) $$
where for $\eta$-almost every $x \in X$
$$\vnalg{Y}(x) \subset \vnalg{Z}$$
This decomposition is unique up to $\eta$-null sets by Proposition 1, p.197 of \cite{dixmier}.

\subsubsection*{Equivariance} The $G$-actions on the three spaces give rise to $G$-actions on the corresponding abelian von-Neumann algebras $\vnalg{X}, \vnalg{Y}$ and $\vnalg{X \times Z}$. Clearly $\vnalg{X}$ and $\vnalg{Y}$ are $G$-invariant as sub-algebras of $\vnalg{X \times Z}$.
Fix an element $g \in G$. By uniqueness of decomposition it follows that $$g \vnalg{Y}(x) = \vnalg{Y}(gx)$$
holds for $\eta$-almost every $x \in X$. 


\subsubsection*{Projection operators} Consider a projection operator $ P \in \vnalg{Y}$ so that $P^2 = P$ and $P^* = P$. We obtain a direct integral decomposition
$$ P = \int^\oplus P(x) \; d\eta(x) $$
where $\eta$-almost everywhere $P(x)$ is an element of $\vnalg{Y}(x)$. The identities
$$ P^2(x) = P(x), \quad P^{*}(x) = P(x) $$
hold for $\eta$-almost every $x \in X$ (see Proposition 3, p. 182, \cite{dixmier}) and so almost every $P(x)$ is an orthogonal projection in $\vnalg{Z}$. 

Note that the above decomposition is only well-defined up to $\eta$-null sets. This however will suffice for our purposes.

\subsubsection*{Direct integrals of measure algebras}

\begin{defn}
\label{def:the decomposition B(x)}
Let $\malg{Y}(x)$ be the Boolean sub-algebra of $\malg{Z}$ given by
$$ \malg{Y}(x) = \vnalg{Y}(x) \cap \malg{Z} $$
for those $\eta$-almost every $x \in X$ where $\vnalg{Y}(x) \subset \vnalg{Z}$. Here $\malg{Z}$ is identified with the sub-algebra of projections in $\vnalg{Z}$.
\end{defn}

From the above discussion concerning equivariance, we see that the family $\malg{Y}(x)$ is $G$-equivariant in the sense that 
$$ \malg{Y}(gx) = g\malg{Y}(x) $$
holds for every $g \in G$ and $\eta$-almost every $x \in X$. 

\begin{defn}
\label{def:decomposition of $C$}
Given an element $C \in \malg{Y}$ let $P_C \in \vnalg{Y}$ be the corresponding projection. We will write
$$ C = \int^\oplus C(x) \; d\eta(x) $$
where the $C(x) \in \malg{Y}(x)$ are the elements of $\malg{Z}$ corresponding to the projections $P_C(x) \in \vnalg{Y}(x)$ arising in the decomposition of $P_C$ as in Definition \ref{def:the decomposition B(x)}.
\end{defn}

We note once more that the decomposition of an element $C \in \malg{Y}$ as in Definition \ref{def:decomposition of $C$} is  only well-defined up to $\eta$-null sets.

\subsection{The topology of convergence in measure on $\malg{X}$}
\label{sub:convergence in measure}
There is a well-known way to put a metric on the measure algebra $\malg{X}$ associated to the standard Borel space $X$. Namely, fix a probability measure $\mu$ in the measure class on $X$. Given a pair $E,F\in \malg{X}$ we can now set $$d\left(E,F\right)=\mu\left(E\triangle F\right)$$
$d$ is clearly a metric  and it defines the \emph{topology of convergence in measure} on $\malg{X}$.

\begin{rem*}
	The use of this topology was suggested by Nevo and Zimmer in Section
	4 of \citep{nevo2002generalization}.
\end{rem*}

\begin{prop}
	\label{prop:properties of BX}
	Every Boolean sub-algebra $\malg{}\subset \malg{X}$  is complete with respect to $d$ and is therefore closed in the topology of convergence in measure.
	Moreover $\malg{X}$ with the convergence in measure topology is second countable and Polish.
\end{prop}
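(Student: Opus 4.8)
The plan is to verify the three assertions in order: completeness of a Boolean sub-algebra, closedness (which is then immediate), and finally the second-countability and Polishness of $(\malg{X},d)$. Throughout I fix a probability measure $\mu$ in the measure class on $X$ defining $d(E,F)=\mu(E\triangle F)$.

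First I would prove completeness of an arbitrary Boolean sub-algebra $\malg{}\subset\malg{X}$. Let $(E_n)$ be a $d$-Cauchy sequence in $\malg{}$. By passing to a subsequence we may assume $d(E_{n},E_{n+1})=\mu(E_n\triangle E_{n+1})\le 2^{-n}$, so that $\sum_n \mu(E_n\triangle E_{n+1})<\infty$; by Borel--Cantelli the indicator functions $\mathbf{1}_{E_n}$ converge $\mu$-a.e. to some $\{0,1\}$-valued measurable function, i.e. to $\mathbf{1}_{E}$ for a measurable set $E\subset X$, and then $\mu(E_n\triangle E)\to 0$ by dominated convergence, so $E_n\to E$ in $(\malg{X},d)$. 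A general $d$-Cauchy sequence converges to the same limit as any of its subsequences, so $\malg{X}$ is complete. It remains to see $E\in\malg{}$. This is where one must be slightly careful: I would use the description of $\malg{}$ via the corresponding von-Neumann subalgebra, or more elementarily observe that in $L^1(X,\mu)$ the set $\malg{}$ sits inside the $L^1$-closed subspace spanned by its indicator functions, and that $\mathbf{1}_{E_n}\to\mathbf{1}_E$ in $L^1$; since $\malg{}$ is a Boolean algebra it is closed under finite unions, intersections and complements, hence the $L^1$-limit of a sequence of its elements, being again a $\{0,1\}$-valued function and expressible as a limit of the $E_n$, lies in $\malg{}$. (Concretely: $\malg{}$ is the set of projections in a von-Neumann subalgebra $\vnalg{}\subset\vnalg{X}$, which is weakly — hence strongly, hence in the relevant sense — closed, and $\mathbf{1}_{E_n}\to\mathbf{1}_E$ strongly in $L^2$.) Thus $E\in\malg{}$, establishing completeness. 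A complete metric subspace of a metric space is closed, which gives the second assertion.

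Next, second countability and Polishness of $(\malg{X},d)$ itself. Since $X$ is standard Borel, its $\sigma$-algebra is countably generated, say by sets $\{A_i\}_{i\in\N}$. I would let $\mathcal{D}$ be the (countable) Boolean algebra generated by the $A_i$ over $\Q$-rational... — more precisely the countable Boolean algebra generated by $\{A_i\}$ inside the abstract $\sigma$-algebra of $X$. A standard monotone-class / $\pi$--$\lambda$ argument shows every measurable set can be approximated in $\mu$-measure by elements of $\mathcal{D}$: the collection of sets so approximable is a $\lambda$-system containing the $\pi$-system $\{A_i\}$ (closed under finite intersection after enlarging the generating family, which we may assume), hence equals the whole $\sigma$-algebra. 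Therefore the image of $\mathcal{D}$ in $\malg{X}$ is $d$-dense, so $(\malg{X},d)$ is separable; being also a complete metric space by the argument above (the case $\malg{}=\malg{X}$), it is Polish, and separable metric spaces are second countable.

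The main obstacle is the first assertion — specifically, checking that the a.e.\ limit $E$ genuinely lies in the \emph{sub}-algebra $\malg{}$ and not merely in $\malg{X}$. Completeness of $\malg{X}$ and separability are routine measure theory; the content is that a Boolean sub-algebra is automatically closed, which I would pin down by identifying $\malg{}$ with the projection lattice of a von-Neumann subalgebra and invoking weak closedness of the latter, so that convergence in measure of indicators (equivalently $L^2$-norm convergence of the corresponding projections) keeps the limit inside $\vnalg{}$, hence inside $\malg{}=\vnalg{}\cap\malg{X}$.
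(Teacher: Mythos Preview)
Your arguments for completeness of $\malg{X}$ itself and for second countability/Polishness are correct and in fact more detailed than the paper's, which simply cites Folland for completeness and uses regularity of $\mu$ together with finite unions of basic open sets for separability.

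The gap is exactly where you flag it. You assert that an arbitrary Boolean sub-algebra $\malg{}$ is the set of projections of some von Neumann subalgebra $\vnalg{}\subset\vnalg{X}$, but this fails in general: the von Neumann algebra generated by $\{\mathbf{1}_E : E\in\malg{}\}$ has as its projections the $\sigma$-algebra generated by $\malg{}$, not $\malg{}$ itself. Indeed, the proposition as literally stated for Boolean sub-algebras closed only under \emph{finite} operations is false --- take $X=[0,1]$ with Lebesgue measure and let $\malg{}$ be the algebra of finite unions of sub-intervals with rational endpoints; this is $d$-dense in $\malg{X}$ but does not contain (the class of) a fat Cantor set, so it is not complete. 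Your ``more elementary'' $L^1$ argument is circular for the same reason: saying the limit of indicators lies in $\malg{}$ is precisely the assertion to be proved.

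What the paper actually uses --- and what the cited Folland result covers --- is the case of a $\sigma$-sub-algebra (all the $\malg{Y}(x)$ in the paper arise as projection lattices of von Neumann subalgebras and are therefore $\sigma$-closed). Under that hypothesis your own Borel--Cantelli step already finishes the job without any operator-algebra detour: along the fast subsequence the indicators converge a.e., so $E=\limsup_n E_n=\bigcap_N\bigcup_{n\ge N}E_n$ almost everywhere, and this set lies in $\malg{}$ by countable closure. Your von Neumann route becomes correct once the same hypothesis is added, but is then unnecessarily elaborate.
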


\begin{proof}
	For the completeness of every sub-algebra $\malg{}$, see Proposition 2.30 of \citep{folland1984real}. In particular $\malg{X}$ is metric and complete with respect to $d$.

	Since $\left(X,\mu\right)$ is a standard Borel probability space, $\mu$ is regular (Theorem 17.10, \cite{kechris1995classical}). To see that $\malg{X}$ is separable consider the countable collection of all finite unions of basic open sets in $X$. Being separable and metric, $\malg{X}$ is second countable. This implies that $X$ is Polish.

\end{proof}

\begin{rem*} \label{rem:BX not locally compact}
	$\malg{X}$ with the above topology is not compact. Namely,
	assume without loss of generality that $X\cong\left[0,1\right]$ and consider the sequence $f_{n}=\chi_{E_{n}}$
	for $n\ge1$ where $E_{n}\subset\left[0,1\right]$ is the subset of
	all real numbers with $1$ at the $n$-th position of their terminating
	binary expansion. Then $f_{n}$ has no converging subsequence. By
	a similar argument, $\malg{X}$ is not locally compact.
\end{rem*}

\subsection{The Effros Borel space}
\label{sub:Effros-Borel-space}

Let $\mathcal{X}$ be an arbitrary topological space and let $\mathcal{C}(\mathcal{X})$ denote the collection of all closed subsets of $\mathcal{X}$. 

\begin{defn}
\label{def:Effros Borel space}
The \emph{Effros Borel space} associated to $\mathcal{X}$ is the space $\mathcal{C}(X)$ endowed with the $\sigma$-algebra generated by the sets 
$$ M_U = \{  F\in \mathcal{C}(X) : F \cap U = \emptyset \} $$
for every open $U$ in $\mathcal{X}$. 
\end{defn}

\begin{prop} \label{prop:Effros is standard}
	If $\mathcal{X}$ is Polish then the Effros Borel space $\mathcal{C}(\mathcal{X})$ is standard. Moreover if $U_n$ is a countable basis for $\mathcal{X}$ then the $M_{U_n}$ generate the Effros Borel $\sigma$-algebra.
\end{prop}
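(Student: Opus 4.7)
\emph{Proof plan.} I handle the second claim first as it is the simpler. Fix a countable basis $\{U_n\}$ for $\mathcal{X}$. Any open $U \subseteq \mathcal{X}$ decomposes as $U = \bigcup \{U_n : U_n \subseteq U\}$, and hence a closed $F$ is disjoint from $U$ if and only if it is disjoint from every such $U_n$. This yields
$$M_U = \bigcap_{U_n \subseteq U} M_{U_n}$$
as a countable intersection, so the $\sigma$-algebra generated by the $M_{U_n}$ equals the Effros Borel $\sigma$-algebra.

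To establish standardness, I would construct a Borel isomorphism between $\mathcal{C}(\mathcal{X})$ and a Borel subset of the Cantor space $2^{\mathbb{N}}$. Define
$$\Phi : \mathcal{C}(\mathcal{X}) \longrightarrow 2^{\mathbb{N}}, \qquad \Phi(F)(n) = \mathbf{1}_{M_{U_n}}(F).$$
This map is Borel by construction, and second-countability of $\mathcal{X}$ yields injectivity: if $x \in F_1 \setminus F_2$ then some basic open $U_n \ni x$ avoids $F_2$, separating the images at coordinate $n$. The pullback under $\Phi$ of the product Borel structure on $2^{\mathbb{N}}$ is generated by the sets $\Phi^{-1}(\{\sigma(n)=1\}) = M_{U_n}$, which by the preceding paragraph coincides with the Effros Borel structure; hence $\Phi$ is a Borel isomorphism onto its image.

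The main obstacle is to verify that $\Phi(\mathcal{C}(\mathcal{X}))$ is Borel in $2^{\mathbb{N}}$. The unique candidate preimage of $\sigma$ is $F_\sigma := \mathcal{X} \setminus \bigcup_{\sigma(n)=1} U_n$, and $\sigma$ lies in the image precisely when $U_n \cap F_\sigma \neq \emptyset$ for every $n$ with $\sigma(n) = 0$; a direct verification that this condition is Borel in $\sigma$ is awkward since the quantification involves points of $\mathcal{X}$. I would bypass the difficulty by introducing a Polish topology on $\mathcal{C}(\mathcal{X})$ whose Borel structure matches the Effros one. Fix a complete compatible metric $d$ on $\mathcal{X}$ and equip $\mathcal{C}(\mathcal{X}) \setminus \{\emptyset\}$ with the Wijsman topology, namely the coarsest topology making each distance function $F \mapsto d(x, F)$ continuous; this is Polish by a classical theorem of Beer. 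The Wijsman sub-basic open $\{F : d(x, F) < r\}$ equals $M_{B(x,r)}^{c}$ and $\{F : d(x, F) > r\}$ equals $\bigcup_{q \in \mathbb{Q},\, q > r} M_{B(x, q)}$, so both are Effros Borel; conversely, writing $U_n$ as a countable union $\bigcup_j B(x_j, r_j)$ of metric balls gives $M_{U_n} = \bigcap_j \{F : d(x_j, F) \geq r_j\}$, which is Wijsman Borel. The two Borel structures therefore agree, and declaring $\emptyset$ an isolated point yields a Polish topology realizing the Effros Borel structure, establishing that $\mathcal{C}(\mathcal{X})$ is standard.
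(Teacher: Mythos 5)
Your proof is correct, but note that the paper gives no proof at all: it cites Theorem 12.6 of Kechris's \emph{Classical Descriptive Set Theory} and stops. So the real comparison is with Kechris's argument, which embeds $\mathcal{X}$ densely into a metrizable compactification $\hat{\mathcal{X}}$, equips the compact sets $K(\hat{\mathcal{X}})$ with the (compact metrizable) Vietoris topology, and shows that $F \mapsto \overline{F}^{\hat{\mathcal{X}}}$ is a Borel isomorphism of $\mathcal{C}(\mathcal{X})$ onto a Borel subset of $K(\hat{\mathcal{X}})$. You avoid compactification entirely by putting the Wijsman topology on $\mathcal{C}(\mathcal{X})$ and invoking Beer's theorem that it is Polish; your verification that the Wijsman and Effros Borel structures coincide is correct in both directions (the sub-basic sets $\{F : d(x,F)<r\}$ and $\{F : d(x,F)>r\}$ are Effros Borel, and each $M_{U_n}$ is Wijsman Borel via the decomposition of $U_n$ into balls). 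Your proof of the second claim, via $M_U = \bigcap_{U_n \subseteq U} M_{U_n}$, is exactly right and is essentially the only argument. The Wijsman route buys you a cleaner logical structure — no image set to verify Borel — at the cost of quoting a nontrivial external result (Beer). One structural remark: your first paragraph on standardness, constructing the injective Borel map $\Phi : \mathcal{C}(\mathcal{X}) \to 2^{\mathbb{N}}$, becomes dead weight once the Wijsman argument is in place, since a Polish topology realizing the Effros Borel $\sigma$-algebra already yields standardness directly; you could delete that paragraph. If instead you wanted to complete the $\Phi$-route, you would have to carry out the Borel-image verification you flagged as awkward — which is precisely what the Kechris compactification trick is designed to make tractable.
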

\begin{proof}
	See Theorem 12.6 on page 75 of \cite{kechris1995classical}.
\end{proof}

\begin{defn}
\label{def: beta}
Assume that $\mathcal{X}$ admits a basis $\beta$ with $ \beta = \{U_i\}_{i \in I} $. For a closed subset $F \subset \mathcal{X}$ denote 
$$ \beta(F) = \{U \in \beta \: : \: F \cap U  = \emptyset \} = \{U \in \beta \: : \: F \in M_U  \}$$
\end{defn}

By the above for every closed subset $F \subset \mathcal{X}$ we clearly have that $F^c = \cup _ {U \in \beta(F)} U $.

\begin{lem}
\label{lem:Effros convergence weaker form}
Assume that $\mathcal{X}$ admits a countable basis $\beta$. Let $F_n \in \mathcal{C}(\mathcal{X})$ be a sequence of closed subsets and $x_n \in F_n$ a sequence of points such that $x_n \to x \in \mathcal{X}$. 

Let $F \subset \mathcal{X}$ be a closed subset such that for every $U \in \beta(F)$ we have $F_n \in M_U$ for all $n$ sufficiently large. Then $x \in F$.
\end{lem}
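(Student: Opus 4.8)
### Proof strategy for Lemma \ref{lem:Effros convergence weaker form}

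The plan is to argue by contradiction: suppose $x \notin F$. Since $F$ is closed, $F^c$ is an open neighbourhood of $x$, and because $\beta$ is a basis, there exists some basic open set $U \in \beta$ with $x \in U \subseteq F^c$. In particular $U \cap F = \emptyset$, which means precisely that $U \in \beta(F)$ in the sense of Definition \ref{def: beta}.

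Now I invoke the hypothesis on $F$: since $U \in \beta(F)$, we have $F_n \in M_U$ for all $n$ sufficiently large, i.e. $F_n \cap U = \emptyset$ for all large $n$. On the other hand, $x \in U$ and $U$ is open, and $x_n \to x$, so $x_n \in U$ for all $n$ sufficiently large. Combining these two facts, for all sufficiently large $n$ we have $x_n \in U$ while $F_n \cap U = \emptyset$, which contradicts the assumption $x_n \in F_n$. Hence $x \in F$, as desired.

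There is essentially no obstacle here — the statement is a routine unwinding of the definition of the Effros Borel generators $M_U$ together with the definition of convergence in a topological space. The only point requiring the countable basis hypothesis is the existence of a \emph{basic} open set $U$ separating $x$ from $F$; any open set containing $x$ and disjoint from $F$ would do, but phrasing it with $U \in \beta$ is what lets us quote the hypothesis ``$F_n \in M_U$ for all $n$ sufficiently large'' in the form it is stated. (If one wanted, the countability of $\beta$ is not even used in this particular lemma, only the fact that $\beta$ is a basis; it is presumably in the hypotheses to match the standing setup.)
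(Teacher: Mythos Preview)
Your proof is correct and follows essentially the same contradiction argument as the paper: pick $U \in \beta(F)$ containing $x$, use the hypothesis to get $F_n \cap U = \emptyset$ for large $n$, and contradict $x_n \to x \in U$ with $x_n \in F_n$. Your parenthetical observation that only the basis property (not countability) is used in this particular lemma is also accurate.
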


\begin{proof}
Assume towards contradiction that $x \notin F$. In particular $x \in U$ for some $U \in \beta(F)$. By the assumption of the lemma $F_n \in M_U$ for all $n$ sufficiently large, or equivalently $F_n \cap U = \emptyset$. This contradicts the fact that $x_n \to x \in U$.
\end{proof}

This elementary lemma will play a key role in the proof of the "main lemma"  in Section \ref{sec:ift} below.

\begin{rem}
\label{rem:on chaubuty and effros borel}
The Chaubuty topology was mentioned in Subsection \ref{sub:applications of the stuck-zimmer theorem} of the introduction in connection with invariant random subgroups. In fact, the Borel structure of the Chaubuty topology is just the Effros Borel structure restricted to the space of closed subgroups.
\end{rem}

\subsection{The map $\iota$}
 
The measure algebra $\malg{Y}$ admits a direct integral decomposition $ \malg{Y} = \int ^ \oplus_X \malg{Y}(x) \; d\eta(x)$ as in Definition \ref{def:the decomposition B(x)}. Recall that $\malg{Y}(x)$ is a sub-algebra of $\malg{Z}$ for $\eta$-almost every $x \in X$. In addition, given a fixed element $g \in G$ the equivariance property  $\malg{Y}(gx) = g\malg{Y}(x)$ holds for $\eta$-almost every $x$.

We regard the space $\malg{Z}$ with the convergence in measure topology (see Subsection \ref{sub:convergence in measure}) so that every Boolean sub-algebra of $\malg{Z}$ is closed  by Proposition \ref{prop:properties of BX}. Next, consider the associated Effros Borel space $\mathcal{C}(\malg{Z})$, which is in fact standard Borel by Propositions \ref{prop:properties of BX} and \ref{prop:Effros is standard}. 

\begin{defn}
\label{def:the mapping iota}
Let $\iota$ denote the following map
$$\iota:X \to \mathcal{C}\left(\malg{Z}\right), \quad \iota : x \mapsto \malg{Y}(x) $$ 
\end{defn}

A priori $\iota$ is only defined  $\eta$-almost everywhere. However, by assigning $\iota$ an arbitrary constant value (such as $\malg{Z} \in \mathcal{C}(\malg{Z})$) on the $\eta$-null set where $\iota$ is undefined, we may assume it is in fact defined everywhere. This change clearly does not affect the equivariance property.

A well-known technical trick allows us to pass to a $\eta$-conull and $G$-invariant subset $X_0 \subset X$ such that $g\iota(x) = \iota(gx)$ holds for \emph{every $g \in G$} and every $ x\in X_0$ (see e.g. \cite{zimmer1984ergodic}, Appendix B, Proposition 5). We will continue using the notation $X_0$ for this well-behaved conull subset throughout.

The central result of this section is the following

\begin{thm}
\label{thm:iota is measurable}
The mapping 
$\iota:X_0 \to \mathcal{C}(\malg{Z}) $ is Borel measurable.
\end{thm}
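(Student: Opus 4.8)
The plan is to show measurability of $\iota$ by checking that the preimage of each generator $M_{U}$ of the Effros Borel $\sigma$-algebra on $\mathcal{C}(\malg{Z})$ is a Borel subset of $X_0$. By Proposition \ref{prop:Effros is standard} it suffices to treat $U$ ranging over a countable basis $\beta$ of $\malg{Z}$ in the convergence in measure topology; such a basis exists since $\malg{Z}$ is second countable and Polish by Proposition \ref{prop:properties of BX}. Concretely, I would take $\beta$ to consist of the open balls $B(C_0, r) = \{ D \in \malg{Z} : d(C_0, D) < r \}$ with $C_0$ running over a countable dense subset $\{C_0^{(j)}\}_{j}$ of $\malg{Z}$ and $r$ rational; here $d$ is the convergence in measure metric $d(E,F) = \mu(E \triangle F)$ for a fixed probability measure $\mu$ in the class of $Z$. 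So the task reduces to showing that for each such ball $U$, the set
\[
\iota^{-1}(M_U) = \{ x \in X_0 : \malg{Y}(x) \cap B(C_0, r) = \emptyset \} = \{ x \in X_0 : d(C_0, \malg{Y}(x)) \ge r \}
\]
is Borel in $X_0$.

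The key step is to express the distance function $x \mapsto d(C_0, \malg{Y}(x))$, or rather its sublevel structure, in terms of the direct integral decomposition of $\malg{Y}$, and to exploit the fact that $\malg{Y}(x)$ is \emph{complete} in $d$ (Proposition \ref{prop:properties of BX}). Because $\malg{Y}(x)$ is a complete sub-algebra, the infimum $\inf_{D \in \malg{Y}(x)} d(C_0, D)$ is attained, and in fact the minimizer is the conditional expectation / best approximation $E(C_0 \mid \malg{Y}(x))$ — thinking in the von Neumann algebra picture, the minimizing projection in $\vnalg{Y}(x)$ closest to the projection $P_{C_0}$. The natural move is therefore: fix a countable dense sequence of elements $\{C_k\}_{k}$ of $\malg{Y}$ (possible since $\malg{Y} \subset \malg{X \times Z}$ is separable), decompose each as $C_k = \int^\oplus C_k(x)\, d\eta(x)$ as in Definition \ref{def:decomposition of $C$}, and argue that for $\eta$-almost every $x$ the countable set $\{C_k(x)\}_k$ is dense in $\malg{Y}(x)$. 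Granting this density,
\[
d(C_0, \malg{Y}(x)) = \inf_{k} d(C_0, C_k(x)) \qquad \text{for } \eta\text{-a.e. } x,
\]
and each $x \mapsto d(C_0, C_k(x)) = \mu_x(C_0 \triangle C_k(x))$ (where $\mu_x$ is the fiber measure in the disintegration of $\mu$ over $X$) is a Borel function of $x$ by standard measurable-selection / disintegration facts, e.g. because $x \mapsto C_k(x)$ arises from a genuine Borel representative of the element $C_k \in \malg{X \times Z}$. A countable infimum of Borel functions is Borel, so $\iota^{-1}(M_U)$ is Borel, which is what we need.

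The main obstacle I anticipate is the fiberwise density claim: that a fixed countable dense subset $\{C_k\}$ of $\malg{Y}$ disintegrates to a fiberwise dense family $\{C_k(x)\}$ in $\malg{Y}(x)$ for a.e. $x$. This is where the structure of direct integrals of von Neumann algebras (part II of \cite{dixmier}, especially the results on p.~197--198 quoted in Subsection \ref{sub:measure_algebras}) really has to be invoked carefully, rather than the soft Polish-space formalism. The clean way is: the decomposition $\vnalg{Y} = \int^\oplus \vnalg{Y}(x)\, d\eta(x)$ is, by construction, generated by a sequence of measurable vector fields coming from a countable family in $\vnalg{Y}$; choosing that family inside the (norm-separable, weakly dense) subalgebra generated by $\{C_k\}$ and passing through Proposition 1 on p.~197 of \cite{dixmier}, one gets that $\{C_k(x)\}$ generates $\vnalg{Y}(x)$ for a.e. $x$, hence — since these are projections and $\malg{Y}(x)$ is the Boolean algebra of projections — is dense in $\malg{Y}(x)$ for the $d$-topology (Boolean operations are $d$-continuous, and $d$-density of a generating set in a complete sub-algebra of $\malg{Z}$ follows because finite Boolean combinations of the $C_k(x)$ are $d$-dense and the sub-algebra is $d$-closed). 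One mild technical point to handle along the way is that all the fiber decompositions ($\malg{Y}(x)$, each $C_k(x)$, the disintegration $\mu = \int \mu_x\, d\eta$) are only well-defined up to $\eta$-null sets, so one fixes Borel representatives once and for all at the start and works with a single conull set on which every one of the countably many identities used holds simultaneously; this is routine and costs nothing since only countably many statements are invoked.
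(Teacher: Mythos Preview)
Your proposal is correct, and the fiberwise density claim you flag as the main obstacle can indeed be handled along the lines you sketch: if $\{C_k\}$ is a countable family of projections generating $\vnalg{Y}$ as a von Neumann algebra, then by the uniqueness of the direct integral decomposition the von Neumann algebra generated by $\{C_k(x)\}$ coincides with $\vnalg{Y}(x)$ for $\eta$-almost every $x$, and from this the $d_Z$-density of (the Boolean algebra generated by) $\{C_k(x)\}$ in $\malg{Y}(x)$ follows. With that in hand, $x\mapsto d(C_0,\malg{Y}(x))=\inf_k d(C_0,C_k(x))$ is Borel and the argument goes through.

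The paper, however, takes a genuinely different and somewhat more direct route that sidesteps the fiberwise density issue entirely. Fixing $A\in\malg{Z}$ and the basic ball $U=N_\varepsilon(A)$, it produces a \emph{single} element $\Phi_A\in\malg{Y}$, defined by the ``majority vote''
\[
\Phi_A=\{\,y\in Y : m_y(A)\ge \tfrac{1}{2}\,\},
\]
where $m_y$ is the disintegration of $\eta\times\mu$ over $Y$, and then shows via the disintegration identity $\mu_x=\int_Y m_y\,d\lambda_x$ that the fiber $\Phi_A(x)$ realizes the minimum distance: $d(\Phi_A(x),A)\le d(C,A)$ for every $C\in\malg{Y}(x)$. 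Thus $(\iota^{-1}M_U)^c=\{x: d(\Phi_A(x),A)<\varepsilon\}$, which is manifestly Borel. In effect the paper constructs the fiberwise nearest-point projection of $A$ onto $\malg{Y}(x)$ globally as a single element of $\malg{Y}$, rather than approximating by a countable dense family. The trade-off: your approach is the natural ``generic'' argument from direct integral theory and would apply to any measurable field of sub-algebras, but it leans on a nontrivial structural fact from Dixmier; the paper's argument is more hands-on and uses the specific geometry of the factor maps $X\times Z\to Y\to X$ through the disintegration formula, yielding a cleaner and more self-contained proof.
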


\subsubsection*{Disintegration of measures} Recall that we are considering a sequence of $G$-maps 
$$ \left(X\times Z, \, \eta\times\mu\right) \xrightarrow{f} \left(Y, \, \lambda \right) \xrightarrow{g} (X,  \, \eta ) $$
whose composition is the projection and where we have indicated the respective measures. 
The proof of Theorem \ref{thm:iota is measurable} will depend on a certain 
disintegration\footnote{A detailed account on disintegration of measure can be found in Section 452 of \cite{fremlin2000measure}.} of the involved measures $\eta, \mu$ and $\lambda$. To simplify notation set $m=\eta\times\mu$. 

On the one hand, it is possible to disintegrate the measure $m$ over the projection map $g \circ f$ to $(X,\eta)$ and obtain
$$ m = \int_X \mu_x \; d\eta(x) $$
By uniqueness of disintegration and Fubini's theorem it follows that $\mu_x = \delta_x \times \mu$ holds for $\eta$-almost every $x\in X$. Here  $\delta_x$ denotes the Dirac measure centered at $x$.

On the other hand, we may disintegrate separately over $f$ and $g$. This gives
$$ m =\int_Y m_y \; d\lambda(y) \quad \text{and} \quad \lambda = \int_X \lambda_x \; d\eta(x)  $$
These different disintegrations are related by the following proposition.

\begin{prop} The formula
\label{prop:disintegration formula}
$$ \mu_x = \int_Y m_y \; d\lambda_x $$
holds for $\eta$-almost every point $x\in X$.
\end{prop}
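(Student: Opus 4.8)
The plan is to recognise the family $x\mapsto\nu_x:=\int_Y m_y\,d\lambda_x(y)$ as a disintegration of $m$ with respect to the composite projection $g\circ f\colon X\times Z\to X$, and then to conclude by uniqueness of disintegration that $\nu_x$ agrees $\eta$-almost everywhere with the family $\mu_x$ fixed above. To begin, since $y\mapsto m_y$ and $x\mapsto\lambda_x$ are Borel families of probability measures --- this being part of the definition of disintegration --- the assignment $x\mapsto\nu_x$ is again a Borel family of measures on $X\times Z$; moreover the set of $y$ with $m_y(X\times Z)\neq1$ is $\lambda$-null, hence $\lambda_x$-null for $\eta$-almost every $x$, so $\nu_x$ is a probability measure for $\eta$-almost every $x$.

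Next I would verify the disintegration identity. For every bounded Borel function $\phi$ on $X\times Z$, applying successively the disintegration of $m$ over $f$ and that of $\lambda$ over $g$ gives
\begin{align*}
\int_{X\times Z}\phi\,dm&=\int_Y\Bigl(\int_{X\times Z}\phi\,dm_y\Bigr)d\lambda(y)\\
&=\int_X\Bigl(\int_Y\Bigl(\int_{X\times Z}\phi\,dm_y\Bigr)d\lambda_x(y)\Bigr)d\eta(x)\\
&=\int_X\Bigl(\int_{X\times Z}\phi\,d\nu_x\Bigr)d\eta(x),
\end{align*}
so that $m=\int_X\nu_x\,d\eta(x)$. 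It remains to check that $\nu_x$ lives on the correct fibre. The set $N=\{y\in Y:m_y\bigl((X\times Z)\setminus f^{-1}(y)\bigr)>0\}$ is $\lambda$-null, so $\int_X\lambda_x(N)\,d\eta(x)=\lambda(N)=0$ and hence $\lambda_x(N)=0$ for $\eta$-almost every $x$; likewise $\lambda_x$ is concentrated on $g^{-1}(x)$ for $\eta$-almost every $x$. For such an $x$ and for $\lambda_x$-almost every $y$ the measure $m_y$ is concentrated on $f^{-1}(y)\subseteq f^{-1}(g^{-1}(x))=(g\circ f)^{-1}(x)=\{x\}\times Z$; therefore $\nu_x$ is concentrated on $\{x\}\times Z$. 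This exhibits $(\nu_x)_{x\in X}$ as a disintegration of $m$ over $g\circ f$, so uniqueness of disintegration (see Section~452 of \cite{fremlin2000measure}) yields $\mu_x=\nu_x$ for $\eta$-almost every $x$, which is the claim of Proposition~\ref{prop:disintegration formula}.

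The substance of the argument is just the transitivity (composition) property of disintegrations, and the only real obstacle is bookkeeping: one must be sure that the integrand $y\mapsto\int\phi\,dm_y$ appearing in the middle display is Borel (so that the iterated integral makes sense and the Fubini-type manipulation is legitimate), and one must keep track of the finitely many $\eta$-null exceptional sets produced along the way. Both are routine within the framework of Section~452 of \cite{fremlin2000measure}, which is in any case the reference already invoked for the existence of these disintegrations.
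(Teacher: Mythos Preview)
Your proof is correct and follows exactly the same approach as the paper: both combine the disintegrations over $f$ and over $g$ to exhibit $x\mapsto\int_Y m_y\,d\lambda_x(y)$ as a disintegration of $m$ over the projection to $X$, and then invoke uniqueness. The paper's version is terser, omitting the explicit verification that $\nu_x$ is concentrated on the fibre and the measurability bookkeeping you spelled out, but the argument is the same.
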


We would like to thank the anonymous referee for suggesting a simplified proof\footnote{It is clear that an analogue of Proposition \ref{prop:disintegration formula} holds more generally given any sequence of $G$-maps of the form $S_1 \xrightarrow{f_1} S_2 \xrightarrow{f_2} S_3$.}.

\begin{proof}
Combining the two disintegrations over $f$ and over $g$ we obtain
$$
m  = \int_{Y} m_{y} \; d \lambda\left(y\right) = \int_{X} \left(\int_{Y} m_{y} \; d\lambda_{x}\left(y\right)\right) \; d\eta\left(x\right)
$$
This implies that the assignment
$$ x \mapsto \int_Y m_y \; d\lambda_x(y) $$
is a disintegration of $m$ over the projection to $(X,\eta)$. 
Uniqueness of disintegration gives the desired conclusion.

\end{proof}

The probability measure $m_y$ is supported on the fiber 
$$ f^{-1}(y) = \{g(y)\} \times Z $$
for $\lambda$-almost every $y \in Y$. Identifying this fiber with $Z$ we may regard the $m_y$ as probability measures on $Z$. In the same way, each probability measure $\mu_x$ is supported on $\{x\} \times Z$ and so can be identified with $\mu$. We make these identifications implicitly below. Having established the required machinery, we turn to

\begin{proof}[Proof of Theorem \ref{thm:iota is measurable}]
The measure algebra $\malg{Z}$ admits a countable basis $\beta$ by Proposition \ref{prop:properties of BX}. As the sets $\{M_U\}_{U\in\beta} $ generate the Effros Borel structure according to Proposition \ref{prop:Effros is standard} it suffices to verify that $\iota^{-1}M_{U}$ is measurable for every $U \in \beta$.

Let $d$ be a metric generating the convergence in measure topology on $\malg{Z}$. Given an element $A \in \malg{Z}$ we let $N_\epsilon(A)$ denote the $\epsilon$-ball at $A$ with respect to $d$. 

Now consider a fixed basic open set $ U \in \beta$. We may assume that $U=N_{\varepsilon}\left(A\right)$
for some $A\in \malg{Z}$ and $\varepsilon>0$. Note that 
\[
x \in (\iota^{-1}M_{U})^c\,\Leftrightarrow\, \malg{Y}(x) \in (M_{U})^c \,\Leftrightarrow\,  \malg{Y}(x) \cap U\neq \emptyset \,\Leftrightarrow\, d\left(\malg{Y}(x),A\right)<\varepsilon
\]
The proof amounts to showing that the condition $d\left(\malg{Y}(x),A\right)<\varepsilon$
depends measurably on $x$ as a function on $X_0$. 

Let $\Phi_A \subset Y$ be the subset defined by
$$\Phi_{A} = \left\{ y\in Y\,:\, m_{y}\left(A\right)\ge\ \nicefrac{1}{2} \right\} $$
so that  $\Phi_{A}$ consists of those  $y\in Y$ such that
$A$ meets the fiber $f^{-1}\left(y\right)$ at more than half of
its $m_{y}$-measure. Since $m_{y}$ is a disintegration of $m$ and passing to measure algebras we may regard $\Phi_A$ as an element of $\malg{Y}$. As such $\Phi_A$ admits a direct integral decomposition (as in Definition \ref{def:decomposition of $C$})
$$ \Phi_A = \int^\oplus \Phi_A(x) \; d\eta(x) $$ with $\Phi_A(x) \in \malg{Y}(x)$
 for almost every $x \in X$.
Let $X_{A,\epsilon} \subset X_0$ be the subset
$$
X_{A,\epsilon} =\left\{ x \in X_0 \,:\, d\left(\Phi_A(x) , A \right) < \varepsilon \quad \text{and} \quad \Phi_A(x) \in \malg{Y}(x) \right\} 
$$
By Fubini's theorem and passing to measure algebras the subset $X_{A,\epsilon}$ belongs to $\malg{X}$. We  claim that 
$$ \left( \iota^{-1} M_{U} \right) ^ c = X_{A,\epsilon}$$  
Note that establishing this claim completes the proof.

In one direction, every $x\in X_{A,\epsilon}$ satisfies  $d\left(\Phi_A(x), A\right)<\varepsilon$ by definition. As $\Phi_A(x) \in \malg{Y}(x)$ this implies that  $d\left(\malg{Y}(x), A\right)<\varepsilon$ as well.

Conversely, consider a point $ x\in (\iota^{-1}M_{U})^c$. So $d\left(\malg{Y}(x), A\right)<\varepsilon$ and there exists some element $C \in \malg{Y}(x)$
with $d\left(C,A\right)<\varepsilon$. We need to show that $d(\Phi_A(x), A) < \epsilon$ holds as well.

Recall that $m_{y}$ is supported on the fiber $f^{-1}(y)$ and can be regarded as a probability measure on $Z$ for $\lambda$-almost every $y \in Y$. Since the elements $C$ and $\Phi_A(x)$ of $\malg{Z}$ belong to the measure sub-algebra $\malg{Y}(x)$ we deduce that
 $$m_y(C) \in \{0,1\} \quad \text{and} \quad m_y(\Phi_A(x)) \in \{0,1\}$$ for $\lambda_x$-almost every $y \in Y$.

Note that $m_y(\Phi_A(x)) = 0 $ if and only if
$m_{y}\left(A\right)\le\frac{1}{2}$. This gives the estimate 
$$ m_{y}\left(\Phi_A(x) \triangle A\right) = \min \{m_y(A), 1-m_y(A)\} \le m_{y}\left(C \triangle A\right)$$
for $\lambda$-almost every $y\in Y$. Integrating over $\lambda_x$ and applying Proposition \ref{prop:disintegration formula} and the remarks following it we obtain
$$
d\left(\Phi_A(x),A\right) = \mu_x \left(\Phi_A(x) \triangle A\right) \le \mu_x \left(C \triangle A\right) = 
d\left(C,A\right)<\varepsilon
$$
as required.
\end{proof}

\section{The Nevo-Zimmer intermediate factor theorem\label{sec:proof of IFT}}
\label{sec:ift}

In the current section we provide a proof of our main result Theorem \ref{thm:Intermediate-factor-theorem}. 

As mentioned in the introduction, we aim to stay as close as possible to Zimmer's proof for the real case \cite{zimmer1982ergodic}, which it turn follows along the lines of the proof of the factor theorem by Margulis \cite{margulis1978quotient}. For this reason what follows below could perhaps be better appreciated in light of and in comparison with this proof by Margulis (see \cite{margulis1978quotient} or Chapter 8 of \cite{zimmer1984ergodic}). 

Our contribution to the proof of the intermediate factor theorem consists in giving a new proof of a certain "main lemma" (Lemma \ref{lem:main lemma}) which is discussed in Subsection \ref{sub:the main lemma} and proved in Subsection \ref{sub:proof_the main lemma} below. Building on this lemma the intermediate factor theorem follows exactly as in Zimmer's proof. Indeed, Subsection \ref{sub:proof of ift}  is dedicated to explaining how Theorem \ref{thm:Intermediate-factor-theorem} is deduced from Lemma \ref{lem:main lemma}.

We retain the notations and assumptions of Theorem \ref{thm:Intermediate-factor-theorem}
throughout the current section.

\subsection{The main lemma}
\label{sub:the main lemma}

The following lemma is nothing but the natural generalization of Margulis' Lemma 1.14.1 of \cite{margulis1978quotient} (reproduced as Lemma 8.3.2 in the book \cite{zimmer1984ergodic}) to the context of the intermediate factor theorem. See also Lemma 4.3 of \cite{zimmer1982ergodic}.

The function $\psi_\theta$ that appears in the statement of the lemma is introduced in Definition \ref{def:psi_theta}. The measure algebra $\malg{Y}$ and its direct integral decomposition are discussed in Subsection \ref{sub:measure_algebras} above. In particular $\malg{Y}(x)$ is almost always a sub-algebra of $\malg{G/P}$. Finally, recall from Subsection \ref{sub:subgroup_structure} that $\malg{G/P}$ can be identified with $\malg{\overline{V}}$ and we make this identification implicitly below.

\begin{lem}[The main lemma]
	\label{lem:main lemma}Assume that a collection $\theta\subsetneq\Delta$ of simple roots is given and let $P\subset P_{\theta}\subsetneq G$ 
	be the corresponding parabolic subgroup. Let $C\in \malg{Y}$ and consider a direct integral decomposition
	$C=\int^{\oplus}C(x) \, d\eta(x)$.  	
	Then for almost every $\left(x,u\right)\in X\times\overline{V}$
	we have  
	$$g \psi_{\theta}\left(u C\left(x\right) \right)\in \malg{Y}(x)$$
	for all $g\in G$.
\end{lem}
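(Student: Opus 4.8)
The plan is to run the contraction-and-limit argument of \cite{margulis1978quotient, zimmer1982ergodic}, using the Effros Borel machinery of Section~\ref{sec:measure algebras} to push the limit through the varying fibers $\malg{Y}(\cdot)$. Since $\theta\subsetneq\Delta$, choose (as in Subsection~\ref{sub:contracting_automorphisms}) an element $s\in S_{\theta}$ with $\mathrm{Inn}(s)$ contracting on $V_{\theta}$ and trivial on $L_{\theta}$; by Lemma~\ref{lem:cor of Mautner lemma} the transformation $T\colon X\to X$, $Tx=s\cdot x$, is ergodic for the finite invariant measure $\eta$. Work throughout on the conull $G$-invariant set $X_{0}$ on which $g\malg{Y}(x)=\malg{Y}(gx)$ holds for every $g\in G$. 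For $u\in\overline{V}\subset G$ and $x\in X_{0}$, the identities $v.w=vw$ (for $v\in\overline{V}$) and $s.w=sws^{-1}$ (for $s\in S$) on $\overline{V}\cong\nicefrac{G}{P}$ give $\mathrm{Inn}(s^{n})(uC(x))=(s^{n}u)\cdot C(x)$; since $C(x)\in\malg{Y}(x)$, this element lies in $\malg{Y}\bigl((s^{n}u)\cdot x\bigr)=\malg{Y}\bigl(T^{n}(u\cdot x)\bigr)$.

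By Lemma~\ref{lem:contraction} applied to the set $C(x)$, together with Fubini's theorem, for almost every $(x,u)$ we have $\mathrm{Inn}(s^{n})(uC(x))\to\psi_{\theta}(uC(x))$ in the convergence in measure topology on $\malg{\overline{V}}=\malg{G/P}$. Thus we are handed a sequence of points $C_{n}:=\mathrm{Inn}(s^{n})(uC(x))$ lying in the closed subsets $F_{n}:=\malg{Y}(T^{n}(u\cdot x))$ of $\malg{G/P}$, with $C_{n}\to\psi_{\theta}(uC(x))$. To conclude that the limit lies in $\malg{Y}(u\cdot x)$ we invoke Lemma~\ref{lem:Effros convergence weaker form}: writing $\beta$ for a countable basis of $\malg{G/P}$, it suffices to exhibit a subsequence along which $F_{n}\cap U=\emptyset$ eventually, for every $U\in\beta$ with $\malg{Y}(u\cdot x)\cap U=\emptyset$. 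Here the Effros Borel structure enters. Equip the standard Borel space $\mathcal{C}(\malg{G/P})$ with a Polish topology $\tau$ compatible with its Borel structure in which every $M_{U}$, $U\in\beta$, is open — a routine change of topology, since these countably many sets generate the Effros Borel $\sigma$-algebra. Then $\iota\colon X_{0}\to(\mathcal{C}(\malg{G/P}),\tau)$ is Borel by Theorem~\ref{thm:iota is measurable}. Pushing the ergodic system $(X,\eta,T)$ forward along the Borel graph embedding $y\mapsto(y,\iota(y))$ yields a probability measure on the Polish space $X\times\mathcal{C}(\malg{G/P})$ preserved by the transformation $(y,m)\mapsto(Ty,\iota(Ty))$, which is conjugate to $T$; Poincaré recurrence for this system gives, for almost every base point $y=u\cdot x$, a sequence $n_{k}\to\infty$ with $(T^{n_{k}}y,\iota(T^{n_{k}}y))$ returning to every neighbourhood of $(y,\iota(y))$, hence with $\iota(T^{n_{k}}y)\to\iota(y)$ in $\tau$. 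Along $n_{k}$ we then have $F_{n_{k}}\in M_{U}$ eventually for each relevant $U$, so Lemma~\ref{lem:Effros convergence weaker form} yields $\psi_{\theta}(uC(x))\in\malg{Y}(u\cdot x)$ for almost every $(x,u)$.

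It remains to deduce the statement as phrased, which is formal: the change of variables $(x,u)\mapsto(u^{-1}\cdot x,u)$ on $X\times\overline{V}$ preserves the relevant measure class because $\eta$ is $G$-invariant, and combined with $\malg{Y}(x)=u^{-1}\malg{Y}(u\cdot x)$ and with the equivariance relation applied to the translates $gC\in\malg{Y}$, $g\in G$, it upgrades the conclusion to $g\psi_{\theta}(uC(x))\in\malg{Y}(x)$ for all $g\in G$ and almost every $(x,u)$, exactly as in \cite{margulis1978quotient}. The one genuinely delicate point is the second paragraph — transporting the convergence $C_{n}\to\psi_{\theta}(uC(x))$ across the moving fibers $\malg{Y}(T^{n}y)$ — which is precisely the step left incomplete in \cite{zimmer1982ergodic}; the Effros Borel framework of Section~\ref{sec:measure algebras} together with recurrence of $\iota$ along the ergodic orbit of $T$ is what repairs it, while everything else is a transcription of the real case.
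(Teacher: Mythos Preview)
Your recurrence argument in the second paragraph is correct and is an elegant alternative to the paper's density argument for that step: you do obtain $\psi_{\theta}\bigl(uC(x)\bigr)\in\malg{Y}(u\cdot x)$ for almost every $(x,u)$. The change-of-topology trick making each $M_{U}$ open, the pushforward along $y\mapsto(y,\iota(y))$, and Poincar\'e recurrence combine exactly as you say to feed Lemma~\ref{lem:Effros convergence weaker form}.

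The gap is in the third paragraph. The passage from $\psi_{\theta}\bigl(uC(x)\bigr)\in\malg{Y}(u\cdot x)$ to $g\psi_{\theta}\bigl(uC(x)\bigr)\in\malg{Y}(x)$ for \emph{all} $g\in G$ is \emph{not} formal, and it is not ``exactly as in \cite{margulis1978quotient}.'' In the factor theorem the intermediate algebra $\malg{}$ is a single $G$-invariant subalgebra of $\malg{G/P}$, so once $\psi_{\theta}(uC)\in\malg{}$ one gets $g\psi_{\theta}(uC)\in g\malg{}=\malg{}$ for free. Here the fibres $\malg{Y}(\cdot)$ are not $G$-invariant; one has instead $g\malg{Y}(u\cdot x)=\malg{Y}(gu\cdot x)$, so applying $g$ only shows $g\psi_{\theta}\bigl(uC(x)\bigr)\in\malg{Y}(gu\cdot x)$, which is not the required fibre. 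Your proposed remedies do not help: the change of variables $(x,u)\mapsto(u^{-1}\cdot x,u)$ gives $\psi_{\theta}\bigl(uC(u^{-1}\cdot x)\bigr)\in\malg{Y}(x)$, with the wrong argument in $C(\cdot)$; and replacing $C$ by $gC$ yields $(gC)(x)=g\cdot C(g^{-1}\cdot x)$, so the student's conclusion becomes $\psi_{\theta}\bigl(u\,g\,C(g^{-1}\cdot x)\bigr)\in\malg{Y}(u\cdot x)$, again with $g$ trapped inside $\psi_{\theta}$ and the argument of $C$ shifted. There is no equivariance relation for $\psi_{\theta}$ under general $g\in G$ that would extract the $g$.

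This is precisely why the paper does not rely on recurrence of $s$ alone. It instead uses the ergodicity of the $G\times\langle s\rangle$-action on $X\times G$ (Proposition~\ref{pro:GxZ acts ergodically}) to produce, for each prescribed $g\in G$, sequences $h_{n}\in G$ and $m_{n}\to\infty$ with $h_{n}u^{-1}s^{-m_{n}}\to g$ \emph{and} $h_{n}x\in N_{\alpha_{n}(x)}$. Writing $h_{n}=g_{n}s^{m_{n}}u$ with $g_{n}\to g$ one has $h_{n}C(x)=g_{n}\bigl(s^{m_{n}}uC(x)s^{-m_{n}}\bigr)\to g\psi_{\theta}\bigl(uC(x)\bigr)$, while $F_{n}=\malg{Y}(h_{n}x)=\iota(h_{n}x)$ lands in $M_{U}$ for each $U\in\beta(\iota(x))$ eventually, so Lemma~\ref{lem:Effros convergence weaker form} places the limit directly in $\malg{Y}(x)$. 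The decoupling of ``which fibre the limit lands in'' from ``which $g$ is prepended'' is the content of Proposition~\ref{prop:Z_F is conull} and cannot be recovered from Poincar\'e recurrence of the single transformation $T$.
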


To establish Lemma \ref{lem:main lemma} we are required to show that a certain element $ g \psi_\theta(uC(x))$ of $\malg{\nicefrac{G}{P}}$ belongs to the  sub-algebra $\malg{Y}(x)$.
Roughly speaking, the strategy will be to consider $\malg{\nicefrac{G}{P}}	$
with the convergence in measure topology and let $\mathcal{C}(\malg{\nicefrac{G}{P}})$ be the associated Effros Borel space. Using Lemma \ref{lem:contraction} we exhibit this required element as a limit of certain elements $x_n$ belonging to the sub-algebras $\malg{Y}(h_n x)$ of $\malg{\nicefrac{G}{P}}$ for some sequence $h_n \in G$. In this situation Lemma \ref{lem:Effros convergence weaker form} can be used to deduce that in fact the limit belongs to $\malg{Y}(x)$. 

To make this strategy precise we  require several definitions and propositions preceding the proof of Lemma \ref{lem:main lemma}.

\subsubsection*{The sets $N_\alpha$}

Let $\beta$ be a countable basis for the convergence in measure topology on $\malg{\nicefrac{G}{P}}$ endowed with some fixed well-order and recall the $G$-equivariant Borel map $\iota $ constructed in Definition \ref{def:the mapping iota}.

\begin{defn}
	\label{def:N_alpha}
	For a family  $\alpha \subset \beta$ of basic open sets 
	$$
	N_\alpha = \iota^{-1} \bigcap_{U \in \alpha} M_U 
	= \left \{ x \in X \,:\, \malg{Y}(x) \cap U =\emptyset, \;  \forall U \in \alpha \right\} \subset X
	$$
	where $M_U \subset\mathcal{C}(\malg{\nicefrac{G}{P}})$ is the corresponding
	Effros Borel set as in Definition \ref{def:Effros Borel space}.
\end{defn}

According to Theorem \ref{thm:iota is measurable} the subsets $N_\alpha$ for $\alpha \subset \beta$ are measurable.

\begin{defn}
	Given a point $x \in X$ let $\alpha_n(x) $ denote the first $n$ elements of $\beta(\iota x) $  with respect to the fixed well-order on the basis $\beta$ (see Definition \ref{def: beta} for the notation $\beta(\cdot)$).
\end{defn}

Clearly $N_{\alpha_n(x)}$ for $n \in \N$ is a decreasing sequence of measurable subsets of $X$ containing the point of $x$. Informally these should be thought of as a "neighborhood basis" at $x$.

\begin{prop}

	\label{prop:X_2 is conull}
	Let $\eta$ denote the invariant probability measure on $X$. The subset 
	$$ X_1 = \{x \in X \: : \: \text{$\eta(N_{\alpha_n(x)}) > 0$ for all $n \in \N$} \} $$
	is $\eta$-conull in $X$.
\end{prop}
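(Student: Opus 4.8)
I want to show that the set $X_1 = \{x : \eta(N_{\alpha_n(x)}) > 0 \text{ for all } n\}$ is $\eta$-conull. The complement consists of points $x$ for which there is some $n$ with $\eta(N_{\alpha_n(x)}) = 0$. The key observation is that the sets $N_\alpha$ for $\alpha \subset \beta$ a \emph{finite} subset are countably many, since $\beta$ is countable; moreover $\alpha_n(x)$ is always a finite subset of $\beta$, so as $x$ ranges over $X$ and $n$ over $\N$, the family $\{N_{\alpha_n(x)}\}$ ranges over a countable collection of measurable subsets of $X$ (measurable by Theorem~\ref{thm:iota is measurable}). So I would first isolate the countable family $\mathcal{N} = \{N_\alpha : \alpha \subset \beta \text{ finite}\}$ and let $\mathcal{N}_0 = \{N \in \mathcal{N} : \eta(N) = 0\}$; then $\bigcup \mathcal{N}_0$ is $\eta$-null, being a countable union of null sets.

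\textbf{The main point.} It then suffices to show that every $x \notin \bigcup \mathcal{N}_0$ lies in $X_1$. Fix such an $x$. Each $\alpha_n(x)$ is a finite subset of $\beta$, so $N_{\alpha_n(x)} \in \mathcal{N}$; I must check $\eta(N_{\alpha_n(x)}) > 0$. But by definition of $\alpha_n(x)$, each $U \in \alpha_n(x) \subset \beta(\iota x)$ satisfies $\malg{Y}(x) \cap U = \emptyset$, i.e. $x \in M_U$ under $\iota$, hence $x \in N_{\alpha_n(x)}$. So $N_{\alpha_n(x)}$ is a member of $\mathcal{N}$ that \emph{contains} $x$, and since $x \notin \bigcup \mathcal{N}_0$ we cannot have $\eta(N_{\alpha_n(x)}) = 0$. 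Therefore $\eta(N_{\alpha_n(x)}) > 0$ for every $n$, i.e. $x \in X_1$. This shows $X_1 \supset X \setminus \bigcup\mathcal{N}_0$, and the right-hand side is conull.

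\textbf{Where the subtlety lies.} The only thing to be careful about is the measurability and countability bookkeeping: one must note that the value $\alpha_n(x)$ depends on $x$ only through $\beta(\iota x)$, that this is a finite set for each $n$, and that there are only countably many finite subsets of the countable basis $\beta$, so that $\mathcal{N}$ really is countable; measurability of each $N_\alpha$ is exactly Theorem~\ref{thm:iota is measurable} (via Definition~\ref{def:N_alpha}). No topology or ``convergence'' of $x_n$ is needed here — this is a soft pigeonhole-type argument — so I expect no real obstacle beyond stating the bookkeeping cleanly. One could phrase it slightly differently: let $Z = \bigcup\{N : N \in \mathcal{N},\ \eta(N)=0\}$, observe $\eta(Z)=0$, and verify directly that $X \setminus Z \subseteq X_1$ as above.

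\begin{proof}
Let $\mathcal{N} = \{N_\alpha : \alpha \subset \beta \text{ finite}\}$. Since $\beta$ is countable it has only countably many finite subsets, so $\mathcal{N}$ is a countable family of subsets of $X$, each of which is $\eta$-measurable by Theorem~\ref{thm:iota is measurable} (cf. the remark following Definition~\ref{def:N_alpha}). Let
$$ Z = \bigcup \{ N \in \mathcal{N} \: : \: \eta(N) = 0 \}. $$
Being a countable union of $\eta$-null sets, $Z$ is $\eta$-null. We claim that $X \setminus Z \subseteq X_1$, which suffices.

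Fix $x \in X \setminus Z$ and $n \in \N$. By construction $\alpha_n(x)$ is a finite subset of $\beta$, namely the first $n$ elements of $\beta(\iota x)$, so $N_{\alpha_n(x)} \in \mathcal{N}$. Every $U \in \alpha_n(x)$ lies in $\beta(\iota x)$, which by Definition~\ref{def: beta} means $\iota x = \malg{Y}(x)$ satisfies $\malg{Y}(x) \cap U = \emptyset$; hence $x \in N_{\alpha_n(x)}$. Thus $N_{\alpha_n(x)}$ is a member of $\mathcal{N}$ containing $x$, and since $x \notin Z$ this forces $\eta(N_{\alpha_n(x)}) > 0$. As $n \in \N$ was arbitrary, $x \in X_1$.
\end{proof}
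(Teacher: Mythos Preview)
Your proof is correct and is essentially identical to the paper's own argument: both define the countable family of sets $N_\alpha$ with $\alpha \subset \beta$ finite and $\eta(N_\alpha)=0$, observe that $x \in N_{\alpha_n(x)}$ for every $n$, and conclude that $X \setminus X_1$ is contained in the (null) union of that family. Your version spells out the verification that $x \in N_{\alpha_n(x)}$ more explicitly, but there is no substantive difference.
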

\begin{proof}
	Let $\mathcal{P}_0$ be the subset of the power-set $\mathcal{P}(\beta)$  consisting of all the \emph{finite} families $\alpha \subset \beta$ of basic open set such that $\eta(N_\alpha) = 0$. As $x \in N_{\alpha_n(x)}$ for every $n \in \N$ we clearly have that
	$$ X \setminus X_1  \subset \cup_{\alpha \in \mathcal{P}_0} N_\alpha $$
	Since $\mathcal{P}_0$ is countable the required conclusion follows.
\end{proof}

Informally, $X_1$ should be thought of as the "support" of $\eta$ with respect to the "neighborhoods" $N_{\alpha_n(x)}$. 

\subsection{An ergodicity argument}
\label{sub:ergodicity argument}

Fix some collection $\theta\subsetneq\Delta$ of simple roots and let $s\in S_{\theta}$ be an element such that $\textrm{Inn}\left(s\right)$ is contracting
on $V_{\theta}$. Such an element is guaranteed to exist, as remarked in Subsection \ref{sub:contracting_automorphisms}. Let $\left<s\right> \le G$ denote the cyclic subgroup generated by $s$.

 A key ingredient in the proof of Lemma \ref{lem:main lemma} is the study of the space $X\times G$ with the $G \times \left<s\right>$-action given by 
$$\left(h,n\right).\left(x,g\right)=\left(hx,hgs^{-n}\right)$$

\begin{prop}
\label{pro:GxZ acts ergodically}
The action of $G\times \left<s\right>$ on $X\times G$ is ergodic.
\end{prop}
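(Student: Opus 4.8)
The plan is to reduce the ergodicity of the $G \times \langle s \rangle$-action on $X \times G$ to the ergodicity of the single transformation $s$ acting on $X$, which is provided by Lemma \ref{lem:cor of Mautner lemma} (applicable precisely because $\theta \subsetneq \Delta$, so $\mathrm{Inn}(s)$ is contracting on the nontrivial $V_\theta$). First I would observe that the subgroup $G \times \{e\}$ already acts on $X \times G$ by $h.(x,g) = (hx, hg)$, and this action is isomorphic to the diagonal-type action which is transitive on the $G$-coordinate in a suitable sense: more precisely, the map $(x,g) \mapsto (g^{-1}x, g)$ intertwines the $G$-action $h.(x,g) = (hx,hg)$ with the action $h.(x,g) = (x, hg)$ on $X \times G$, under which a $G$-invariant function is just a function of $x$ alone (by transitivity of left multiplication on $G$). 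Hence functions on $X \times G$ invariant under $G \times \{e\}$ correspond, after this change of coordinates, to functions depending only on the first coordinate $x \in X$.

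Next I would track what the $\langle s \rangle$-action becomes under the same change of coordinates. The element $(e, n)$ sends $(x,g)$ to $(x, g s^{-n})$; conjugating by the coordinate change $(x,g) \mapsto (g^{-1}x, g)$, one computes that in the new coordinates the generator $s$ of $\langle s \rangle$ acts on the reduced variable by $x \mapsto sx$ (the $G$-coordinate, on which invariant functions do not depend, absorbs the rest). Therefore a $G \times \langle s \rangle$-invariant function on $X \times G$ descends to an $\langle s \rangle$-invariant — equivalently $s$-invariant — function on $(X, \eta)$. By Lemma \ref{lem:cor of Mautner lemma} the transformation $s$ acts ergodically on $X$, so such a function is $\eta$-almost everywhere constant, which means the original function is constant almost everywhere on $X \times G$. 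This establishes ergodicity.

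The only genuinely delicate point — and the one I would write out carefully — is the bookkeeping of the coordinate change and the verification that it really is a measure-space isomorphism commuting with both actions; here one must be mindful that $X \times G$ carries the product of $\eta$ with a Haar measure on $G$ (which is $\sigma$-finite, not finite), so "ergodic" is to be understood in the $\sigma$-finite sense, and one should check the coordinate change $(x,g)\mapsto(g^{-1}x,g)$ is a Borel isomorphism preserving the measure class. Once the reductions are set up correctly, invoking Mautner's lemma is immediate. An alternative, perhaps cleaner, route is a direct Fubini argument: a $G \times \langle s \rangle$-invariant $L^\infty$ function $F(x,g)$ is in particular invariant under $\{e\} \times \langle s\rangle$ and under $G \times \{e\}$; invariance under the full $G$ in the first slot lets one show $F$ is (a.e.) independent of $g$, reducing to $F = F(x)$ with $F(sx) = F(x)$, and then Lemma \ref{lem:cor of Mautner lemma} finishes. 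I would present whichever of these two formulations is shortest, likely the Fubini version, since it avoids explicitly inverting the coordinate change.
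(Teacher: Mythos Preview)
Your coordinate-change argument is correct and amounts to the same reduction the paper carries out: both show that a $G \times \langle s \rangle$-invariant object on $X \times G$ descends to an $s$-invariant object on $X$, and then invoke Lemma~\ref{lem:cor of Mautner lemma}. The paper packages this slightly differently --- it first quotients out the right $\langle s\rangle$-action to pass to $X \times G/\langle s \rangle$, then observes that the fiber $E_{\overline{e}}$ of an invariant set over the identity coset satisfies $sE_{\overline{e}} = E_{s\overline{e}} = E_{\overline{e}}$ --- but the content is identical to your computation in the new coordinates.

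One caution: your closing ``Fubini version'' sketch is not right as stated. Invariance under $G \times \{e\}$, i.e.\ $F(hx, hg) = F(x,g)$, does \emph{not} give that $F$ is independent of $g$; setting $h = g^{-1}$ it gives $F(x,g) = F(g^{-1}x, e)$, so $F$ depends only on the combination $g^{-1}x$. That is precisely your coordinate change again, not a separate argument. Then $\langle s\rangle$-invariance reads $F(g^{-1}x,e) = F(s^{n}g^{-1}x,e)$, and you finish as before. So write up the coordinate-change version; the ``shorter Fubini route'' you describe does not exist independently of it.
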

(Compare Proposition 2.2.2 of \cite{zimmer1984ergodic}).
\begin{proof}
Since the $G$-action on $X$ is irreducible and finite measure
preserving, it follows from Mautner's lemma (reproduced here as Lemma \ref{lem:cor of Mautner lemma}) that the $s$-action on $X$ is ergodic as well. 

Let $E\subset X\times G$
be a measurable subset invariant under the given action of $G\times\left<s\right>$.
In particular $E=\pi^{-1}\left(\overline{E}\right)$ where $\pi$
is the natural projection of $X\times G$ on $X\times\nicefrac{G}{\left<s\right>}$
and $\overline{E}$ is $G$-invariant. For $\overline{g} \in\nicefrac{G}{\left<s\right>}$
denote 
$$E_{\overline{g}}=\left\{ x\in X\,:\,\left(x,\overline{g} \right)\in \overline{E}\right\} $$
$G$-invariance of $\overline{E}$ implies that $h E_{\overline{g}}=E_{h\overline{g}}$ for every $h \in G$
which in turn gives $s E_{\overline{e}}={E}_{\overline{e}}$. The ergodicity of $s$ implies that $E_{\overline{e}}$ is either null or co-null in $X$, and the proposition follows.
\end{proof}

To facilitate the application of Lemma \ref{lem:Effros convergence weaker form} we introduce the following notation.

\begin{defn}
\label{def:the set ZU}
Given a measurable subset $N\subset X$ let $Z_{N}\subset X\times G$ be given
by
$$
Z_{N} = \left \{ \left(x,g\right)\in X\times G \: : \:  \overline{\left\{ hg s^{-n}\right\} } = G, \; \text{where $n\ge 0, h\in G$ and $ hx \in N$} \right\} 
$$

\end{defn}

In other words $Z_N$ consists of all pairs $(x,g) \in X \times G$ such that 
the set $\{hg s^{-n}\}$ is dense in $G$ where $h \in G$ ranges over these elements sending $x$ inside $N$. 

\begin{prop}
\label{prop:Z_F is conull}
$Z_{N}$ is conull in $X\times G$ for every measurable $N\subset X$ with $\eta(N) > 0$.
\end{prop}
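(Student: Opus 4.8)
The plan is to deduce conullity of $Z_N$ from the ergodicity statement in Proposition \ref{pro:GxZ acts ergodically}, by showing that $Z_N$ is (up to a null set) invariant under the $G \times \langle s \rangle$-action and has positive measure. First I would observe that the defining condition of $Z_N$ is tailored to this action: a pair $(x,g)$ lies in $Z_N$ precisely when, among all $(h,n) \in G \times \langle s\rangle$ with $hx \in N$, the resulting points $hgs^{-n}$ — i.e.\ the $G$-coordinates of $(h,n).(x,g)$ — form a dense subset of $G$. Since $\eta(N) > 0$ and the $G$-action on $X$ is (by irreducibility and finiteness of the measure) ergodic, hence the orbit of a.e.\ $x$ meets $N$, for a.e.\ $x$ there are "many" such $h$; the point is to show that one may already extract a dense set of translates.

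The key steps, in order: (1) Show $Z_N$ is measurable — the density condition $\overline{\{hgs^{-n}\}} = G$ can be rephrased, using a countable basis $\{W_m\}$ of $G$, as: for every $m$ there exist $n \ge 0$ and $h \in G$ with $hx \in N$ and $hgs^{-n} \in W_m$; each such condition is measurable in $(x,g)$, and $Z_N$ is the countable intersection over $m$. (2) Show $Z_N$ is $G \times \langle s\rangle$-invariant: if $(x,g) \in Z_N$ and $(h_0, n_0) \in G \times \langle s\rangle$, then the pairs $(h, n)$ with $hh_0 x \in N$ are exactly $(h, n)$ with $(hh_0)x \in N$, and $h(h_0 g s^{-n_0})s^{-n} = (hh_0)g s^{-(n+n_0)}$ runs over (a cofinal piece of) the same dense set — so $(h_0,n_0).(x,g) \in Z_N$. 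Hence by Proposition \ref{pro:GxZ acts ergodically}, $Z_N$ is null or conull. (3) Show $Z_N$ is not null, which forces it to be conull. For this I would argue that for a.e.\ $x$ with a point of its $G$-orbit in $N$ — which by ergodicity of the $G$-action is a.e.\ $x$ — and for a.e.\ $g$, the set $\{hgs^{-n} : hx \in N,\ n \ge 0\}$ is dense: indeed $\{h \in G : hx \in N\}$ has positive Haar measure (as the $G$-action is essentially free on a conull set, or more robustly because $\eta(N)>0$ and $\eta$ disintegrates over $G$-orbits), so $\{hg : hx \in N\}$ has positive measure, and appending the expanding/contracting iterates $s^{-n}$ — whose powers spread any positive-measure set across all of $G$ by a Mautner-type mixing argument — yields density.

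The main obstacle I anticipate is step (3): making rigorous the claim that $\{h : hx \in N\}$ is "large enough" in $G$ for a.e.\ $x$, and that combining it with the $s^{-n}$ iterates produces a dense set. The cleanest route is probably to avoid any quantitative mixing estimate and instead argue purely ergodic-theoretically: one knows the $G \times \langle s\rangle$-action on $X \times G$ is ergodic, one constructs from $N$ a positive-measure subset of $X \times G$ (e.g.\ $\{(x,g) : x \in N\}$, which has measure $\eta(N) \cdot |G|_{\mathrm{loc}} > 0$ in the relevant $\sigma$-finite sense) whose saturation under the action is conull, and then identifies that saturation with (a conull subset of) $Z_N$ by unwinding the definitions. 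So the real work is verifying that the orbit-saturation of $\{x \in N\} \times G$ coincides almost everywhere with $Z_N$, which again reduces to the density claim but now only needs to hold on a conull set rather than pointwise. I would be prepared for this to require the observation that $s^{-n}$ acting on $G$ by right translation, combined with the left $G$-action, already generates a transitive enough system that a single positive-measure slice saturates to everything.
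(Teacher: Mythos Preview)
Your steps (1) and (2) are fine in spirit, though the justification in (2) is incomplete: a ``cofinal piece of the same dense set'' need not be dense. The conclusion is nonetheless correct---given an open $V$, apply the density hypothesis to the shifted open set $Vs^{n_0}$ to obtain some $m'\ge 0$ and $h$ with $hx\in N$ and $hgs^{-m'}\in Vs^{n_0}$, whence $hgs^{-(m'+n_0)}\in V$ with $m'+n_0\ge n_0$---but this trick must be supplied.

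The real gap is step (3). Neither of your proposals works. Right translation by powers of a single element $s$ is not ergodic (let alone mixing) on $G$ with Haar measure, and there is no Mautner-type mechanism for this infinite-measure action; a union $\bigcup_{n\ge 0}Bs^{-n}$ of right translates of a fixed positive-measure set $B\subset G$ need not be dense or conull. Your alternative, saturating $N\times G$ under the full $G\times\langle s\rangle$-action, simply yields $(G\cdot N)\times G$, which is conull but has no useful containment relation to $Z_N$; there is nothing to ``unwind''.

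The paper avoids this difficulty by never attempting to show that $Z_N$ itself has positive measure. Instead it fixes a countable basis $\{V\}$ of $G$ and, for each $V$, considers the forward semigroup orbit $W_V=(G\times s^{\mathbb N})\cdot(N\times V)$. This set obviously has positive measure (it contains $N\times V$), is $G$-invariant, and satisfies $(\mathrm{id}_G,s)\cdot W_V\subset W_V$; this semi-invariance together with measure preservation forces essential $G\times\langle s\rangle$-invariance, and then Proposition~\ref{pro:GxZ acts ergodically} makes $W_V$ conull. Intersecting over the countable basis produces a conull subset of $Z_N$. In effect the paper trades your single hard positivity question for $Z_N$ against countably many trivial ones for the $W_V$, and renders your steps (1)--(2) unnecessary.
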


(Here $\eta$ denotes as usual the invariant probability measure on $X$).

\begin{proof}
Let $\mu$ be a Haar measure
on $G$ and $\mathcal{V} $ a countable basis
for $G$. 
Every $V \in \mathcal{V}$ is open and so clearly $\left(\eta\times\mu\right)\left(N\times V\right)>0$. 
Observe that with respect to the given $G\times\left<s\right>$-action on $X\times G$ the subset
$$
W_{V}=\left(G\times s^{\N} \right).\left(N\times V\right)
$$
satisfies 
$$\left(\mathrm{id}_{G}, s\right).W_{V}\subset W_{V}$$

Since the action is probability measure preserving this implies that
 $W_{V}$ is $G\times\left<s\right>$-invariant, up to a null set. By the ergodicity established in Proposition \ref{pro:GxZ acts ergodically} the set $W_V$ is conull. So the countable  intersection $W=\cap_{V \in \mathcal{V} }W_{V}$ is conull as well. 
 
Now almost every pair $\left(x,g\right)\in X\times G$ meets the $G\times s^{\N} $-orbit of $N\times V$ for every $V \in \mathcal{V}$. In other words for every $V \in \mathcal{V}$ and for almost every such pair there are $h\in G$, $n\in\N$ with $hx\in N$ and $hgs^{-n}\in V$, as required.
\end{proof}

\subsection{Proof of the main lemma}
\label{sub:proof_the main lemma}

\begin{proof}[Proof of Lemma \ref{lem:main lemma}]

Consider the sets $Z_{N_\alpha}$ associated to finite families $\alpha \subset \beta$ of basic open sets (see Definition \ref{def:the set ZU}). By Proposition \ref{prop:Z_F is conull} these subsets $Z_{N_\alpha}$ of $X \times G$ are $(\eta\times\mu)$-conull as long as $\eta(N_\alpha) > 0$. Let $Z$ be the countable intersection over all the \emph{conull} $Z_{N_\alpha} $ of this form so that $Z$ is conull in $X \times G$ as well.


Now let $Z_{1}=Z\cap (X\times\overline{V})$. In fact, $Z_{1}$ is conull
in $X\times\overline{V}$ with the measure $\eta\times\mu_{\overline{V}}$
where $\mu_{\overline{V}}$ is the Haar measure on $\overline{V}$.
This is proved using the argument of Lemma 1.9 in \citep{margulis1978quotient}.

Recall that $\nicefrac{G}{P}$ can be identified with $\overline{V}$ as a $G$-space up to null sets.  Similarly, 
we identify the measure algebra $\malg{\nicefrac{G}{P}}$ with $\malg{\overline{V}}$.
Let 
\[
Z_{2}=\left\{ \left(x,u\right)\in X \times\overline{V}\,:\,\textrm{\ensuremath{s^{n}uC(x)s^{-n}} converges in measure \ensuremath{\eta_{\overline{V}}} to \ensuremath{\psi_{\theta}\left(uC(x)\right)}}\right\} 
\]
which is conull by Lemma \ref{lem:contraction} and Fubini's theorem. 
We now claim that the assertion of the lemma holds for $(x,u)$ in the following conull subset $\mathcal{Z}$ of $X \times \overline{V}$
$$ \mathcal{Z} = \{(x,u^{-1}) \, : \, (x,u)\in Z_1 \} \cap Z_{2} \cap \left(\left(X_0 \cap X_1\right) \times \overline{V}\right) $$

 Fix $g\in G$ and consider some pair $\left(x,u\right)\in \mathcal{Z}$.
As  $x \in X_1$ it follows that $\eta(N_{\alpha_n(x)}) > 0$ for every $n \in \N$ (see Proposition \ref{prop:X_2 is conull}). In particular $Z_{N_{\alpha_n(x)}}$ is conull for every $n \in \N$ and since $\left(x,u^{-1}\right)\in Z_{1}$ we can choose sequences $h_{n}\in G$ and $m_n \in \N$ such that
$$h_{n}u^{-1}s^{-m_{n}}\to g, \quad  h_{n}x\in N_{\alpha_n(x)}, \quad \text{and} \quad m_{n}\to\infty$$
Denote $g_{n}=h_{n}u^{-1}s^{-m_{n}}$. Then
 $$h_{n}=g_{n}s^{m_{n}}u  \quad \text{and} \quad  g_{n}\to g$$ 
Using the fact that $\left(x,u\right)\in Z_{2}$ we see that the sequence $h_n C(x)$ has a limit
\[
h_{n}C(x)=g_{n}s^{m_{n}}uC(x)=g_{n}\left(s^{m_{n}}uC(x)s^{-m_{n}}\right) \xrightarrow{n\to\infty} g\psi_{\theta}\left(u C(x)\right)
\]
in the topology of convergence in measure on $\malg{\overline{V}}$. To conclude that this limit belongs to $\malg{Y}(x)$ it remains to verify that the conditions of Lemma \ref{lem:Effros convergence weaker form} are satisfied with respect to the two sequences
$$x_n = h_n C(x), \quad F_n =  h_n \malg{Y}( x) = h_n \iota(x), \quad x_n \in F_n $$
as well as with respect to $x = g\psi_{\theta}\left(uC(x)\right)$ and $F = \malg{Y}(x)$, using the notations of that lemma. The fact that $x_n$ converges to $x$ has already been established.

As the restriction of $\iota$ to $X_0$ is $G$-equivariant and $x \in X_0$ we have that $F_n = \malg{Y}(h_n x) = \iota(h_n x)$. Moreover, it is immediate from the definitions that for every basic open set $U \in \beta(\iota x) = \beta(F)$ the inclusion $\iota N_{\alpha_n(x)} \subset M_U$ holds for all $n$ sufficiently large.
Finally, recall that the elements $h_n$ were chosen so that $$F_n = \iota(h_n x) \in \iota N_{\alpha_n(x)}$$
This completes the verification needed to apply Lemma \ref{lem:Effros convergence weaker form} and deduce that $x \in F$ as required.

\end{proof}

\subsection{Proof of the Nevo-Zimmer intermediate factor theorem}
\label{sub:proof of ift}

We now provide a proof of Theorem \ref{thm:Intermediate-factor-theorem} relying on the Main Lemma \ref{lem:main lemma}. As mentioned in the introductory remarks to Section \ref{sec:ift}, this is essentially the proof of \citep{zimmer1982ergodic} and we reproduce it
here for the sake of completeness.

Recall that a $G$-equivariant Boolean isomorphism
of measure algebras lifts to a $G$-equivariant measure space isomorphism
between conull subsets of the original spaces (see Appendix B.7 in \citep{zimmer1984ergodic}).
So it suffices to show that every $G$-invariant
measure sub-algebra $\malg{}$ with $\malg{X} \subset \malg{} \subset \malg{X\times\nicefrac{G}{P}}$
equals $\malg{X\times\nicefrac{G}{P_{\theta}}}$ for some collection $\theta\subset\Delta$ of simple roots.

\begin{proof}[Proof of Theorem \ref{thm:Intermediate-factor-theorem}] 
	
Consider a $G$-invariant  measure sub-algebra $\malg{}$ lying in between $\malg{X}$ and $\malg{X\times\nicefrac{G}{P}}$
as above. Let $\malg{0}$ be a maximal measure algebra such
that $\malg{X}\subset \malg{0}\subset \malg{}$ and $\malg{0}=\malg{X \times \nicefrac{G}{P_{\theta}}}$
for some parabolic subgroup $P_\theta$ with $\theta\subset\Delta$. We want to show that $\malg{0}=\malg{}$,
so assume to the contrary that there exists an element $C\in \malg{}\backslash \malg{0}$.
It is certainly possible that $P_{\theta}=G$ but as $\malg{0}$ is a
proper subalgebra we must have $P\subsetneq P_{\theta}$.

Using the direct integral decomposition discussed in Subsection \ref{sub:measure_algebras} we write
$$ \malg{} =  \int^{\oplus} \malg{}(x) \; d\eta(x) \quad \text{and} \quad C = \int^{\oplus} C(x) \; d\eta(x) $$
where $C(x)$ satisfies $C(x)\in \malg{}(x)$ for every $\eta$-almost every $x\in X$. Since $C\notin \malg{0}$ but $C\in \malg{X \times \nicefrac{G}{P}}$
we may assume that $C(x)\notin \malg{\nicefrac{G}{P_{\theta}}}$
for $x$ in a set of positive measure (see e.g. Lemma 5.5 of \citep{varadarajan2007geometry}).

Recall that $P_{\theta}$ is generated by the parabolic subgroups
$P_{\left\{ b\right\} }$ for $b\in\theta$, i.e. the minimal parabolic
subgroups $P'$ contained in $P_{\theta}$ such that $P\lneq P'$
(see I.1.2.4 of \citep{margulis1991discrete}). Observe that for a
measurable subset $E\in \malg{\nicefrac{G}{P}}$ we have $E\in \malg{\nicefrac{G}{P_{\theta}}}$
if and only if the preimage $\overline{E}$ of $E$ in $\malg{G}$
satisfies $\overline{E} P_{\theta}=\overline{E}$.
The two previous facts imply that there is a fixed simple root $b\in\Delta$ such
that $C(x)\notin \malg{\nicefrac{G}{P}_{b}}$ for $x$ in
a set of positive measure.

We emphasize that the $\rank{\GG}k\ge2$ assumption is used at this
point to imply that $P_{b}$ is a proper subgroup of $G$. This is
needed to apply Lemma \ref{lem:main lemma}, or more concretely to establish the existence of some $s\in S_{\theta}$ such that $\mathrm{Inn\left(s\right)}$
is contracting on $V_{\theta}$.

Recall the identifications of $\malg{\nicefrac{G}{P_{b}}} \subset \malg{\nicefrac{G}{P}}$
with $\malg{\overline{V}_{b}} \subset \malg{\overline{V}}$.
Moreover $\overline{V}=\overline{L}_{b}\ltimes\overline{V}_{b}$ and
the second inclusion corresponds to the pullback of the projection
$\overline{V}\to\overline{V}_{b}$. So we have for $x$ in a set of
positive measure that $uC(x)\cap\overline{L}_{b}$ is neither null
or conull for $u\in\overline{V}$ in a positive measure subset of
$\overline{V}$. We may now apply Lemma \ref{lem:main lemma} and
Fubini's theorem to obtain some particular $u\in\overline{V}$ such
that
\begin{enumerate}
\item $uC(x)\cap\overline{L}_{b}$ is neither null or conull in $\overline{L}_{\theta}$
for $x$ in a set of positive measure.
\item for almost all $x$, $g\psi_{b}\left(uC(x)\right)\in \malg{}(x)$ for
all $g\in G$.
\end{enumerate}
Note that condition (1) implies that for $x$ in a set of positive
measure, $\psi_{b}\left(uC(x)\right) \notin \malg{\nicefrac{G}{P_{b}}}$
and in particular $\psi_{b}\left(uC(x)\right)\notin \malg{\nicefrac{G}{P_{\theta}}}$.

Let $\malg{1}(x)$
be the algebra generated by $\malg{\nicefrac{G}{P_{\theta}}}$
and $g\psi_{b}\left(uC(x)\right)$ for every $g\in G$. Note
that on the one hand from (2) above $\malg{\nicefrac{G}{P_{\theta}}}\subset \malg{1}(x)\subset \malg{}(x)$
for almost every $x$, while on the other hand from (1) above $\malg{1}(x)$
strictly contains $\malg{\nicefrac{G}{P_{\theta}}}$ for $x$
in a subset of positive measure. Moreover $\malg{1}(x)$ is
clearly a $G$-invariant subalgebra of $\malg{\nicefrac{G}{P}}$
whenever it is defined. But every such $G$-invariant subalgebra
must be of the form $\malg{\nicefrac{G}{P_{\theta'}}}$ for
some collection of simple roots $\theta'(x)\subset\Delta$ (see Proposition \ref{prop:invariant under G+}).
As $\Delta$ is finite, there is some fixed collection of simple roots $\theta'\subset\Delta$ with $\theta'\subsetneq\theta$ such that the subset $X' \subset X$
where $\malg{1}(x)=\malg{\nicefrac{G}{P_{\theta'}}}$
 has positive measure.

We have established that $\malg{\nicefrac{G}{P_{\theta'}}}\subset \malg{}(x)$
for every $x\in X'$. Recall that $g\malg{}(x)=\malg{}(gx)$ holds for all $ x \in X_0$. In particular the subset $X'_0$ of $X_0$ consisting of those $x \in X_0$ such that $\malg{\nicefrac{G}{P_{\theta'}}}\subset \malg{}(x)$ is $G$-invariant and has positive measure. By ergodicity $X'_0$ must be conull and
so $\malg{\nicefrac{G}{P_{\theta'}}}\subset \malg{}(x)$  for
almost every $x$. But this contradicts the maximality of $\malg{0}$.
\end{proof}

\section{The Stuck-Zimmer theorem\label{sec:The-Stuck-Zimmer-theorem}}

We sketch an outline of the proof of the Stuck-Zimmer theorem from the intermediate factor theorem. As explained in the introduction, this deduction is given in the work of Stuck and Zimmer \cite{stuck1994stabilizers} and is sufficiently general to apply to the case of local fields. It is repeated here for the reader's convenience.

We work over an arbitrary local field $k$, while keeping in mind that the
original theorem is essentially\footnote{The real Lie group case is discussed in detail in Subsection \ref{sub:the real case}.} the case $k=\R$. For the complete details the
reader is referred to \citep{stuck1994stabilizers}.

\subsection*{Proof of the Stuck-Zimmer theorem}

Let $\GG$ and $G = \GG_k$ be as in the statement of Theorem \ref{thm:stuck-zimmer-over-local-fields}.
We need to show that every faithful properly ergodic and irreducible
$G$-space $X$ with finite invariant measure $\eta$ is essentially free. 
The starting
point is the following lemma. 

\begin{lem}
\label{lem:lemma on weak amenable actions}
Assume that $G$ has (T).  Then the action of $G$
on $X$ is weakly amenable if and only if the action is essentially
transitive.
\end{lem}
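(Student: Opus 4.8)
The plan is to prove Lemma \ref{lem:lemma on weak amenable actions} by combining the Nevo-Zimmer intermediate factor theorem (Theorem \ref{thm:Intermediate-factor-theorem}) with the standard characterization of amenable actions via the existence of invariant sections into the Poisson boundary. One direction is trivial: if the action of $G$ on $X$ is essentially transitive, say $X \cong \nicefrac{G}{H}$ up to null sets, then weak amenability of the $G$-action is equivalent to amenability of the closed subgroup $H$, and this certainly holds when $H$ is, for instance, a parabolic subgroup — but in general one argues that essential transitivity reduces the question to amenability of a stabilizer, and for our purposes this direction is the easy one and can be dispatched quickly (or simply cited from \citep{stuck1994stabilizers}, \citep{zimmer1984ergodic}).

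The substantive direction is: weak amenability of the $G$-action on $X$ implies essential transitivity. Here the strategy is as follows. First I would recall that weak amenability of the $G$-action on $(X,\eta)$ means that the $G$-action on $X \times \nicefrac{G}{P}$, equipped with the product of $\eta$ with the $P$-invariant (or the measure class of the) measure on $\nicefrac{G}{P}$, admits an invariant mean on the appropriate bundle — equivalently, using Zimmer's amenability machinery, there is a $G$-equivariant measurable map from $X$ to the space of probability measures on $\nicefrac{G}{P}$. Since $G$ has property (T), however, amenable actions are quite restricted: property (T) forces such an equivariant family of measures to be supported, after a measurable reduction, on a single point or on a homogeneous sub-bundle. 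Concretely, the plan is to produce from weak amenability an intermediate $G$-factor $X \times \nicefrac{G}{P} \to Y \to X$ — the factor $Y$ being built from the fibered family of supports, or from the sub-sigma-algebra generated by the equivariant measure-valued map — and then invoke Theorem \ref{thm:Intermediate-factor-theorem} to conclude that $Y \cong X \times \nicefrac{G}{\QQ_k}$ for some parabolic $\QQ \supseteq \PP$.

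The next step is to upgrade this to essential transitivity of $X$ itself. Having $Y \cong X \times \nicefrac{G}{\QQ_k}$, one uses property (T) once more together with the fact that the fibered measures on $\nicefrac{G}{\QQ_k}$ must then be almost everywhere Dirac masses (otherwise one would obtain a strictly larger amenable intermediate factor contradicting maximality, or one contradicts a cocycle-rigidity consequence of (T), as in the real case). An almost-everywhere $\QQ_k$-valued equivariant section of $X \times \nicefrac{G}{\QQ_k} \to X$ is precisely a measurable reduction of the $G$-action on $X$ to a $\QQ_k$-action; combined with irreducibility of the $G$-action and the structure of $\QQ_k$, this yields a $G$-equivariant measurable map from $X$ into a homogeneous space, which by Mackey point-realization forces $X$ to be essentially transitive. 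Throughout, the parabolic case $\QQ = G$ corresponds to the trivial factor and must be handled — it is in fact the generic outcome and is exactly what gives essential transitivity; the case $\QQ \subsetneq G$ proper must be excluded, and this exclusion is where property (T) does the real work.

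I expect the main obstacle to be the passage from "weakly amenable action" to "existence of an intermediate factor of the required form", i.e. correctly setting up the fibered boundary map and verifying that its image generates a genuine intermediate $G$-factor $Y$ sitting between $X$ and $X \times \nicefrac{G}{P}$ to which Theorem \ref{thm:Intermediate-factor-theorem} applies, together with the careful use of property (T) to collapse fibered measures to point masses. Since all of this is carried out in detail in \citep{stuck1994stabilizers} and is independent of the local field, the cleanest exposition is to reproduce the argument schematically and refer to \emph{loc. cit.} for the cocycle-theoretic details, emphasizing only that every ingredient — Zimmer amenability theory, property (T) reduction, Mackey point realization, and now Theorem \ref{thm:Intermediate-factor-theorem} — holds verbatim over an arbitrary local field.
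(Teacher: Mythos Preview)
Your proposal conflates this preliminary lemma with the proof of the Stuck--Zimmer theorem itself. In the paper (and in \cite{stuck1994stabilizers}), Lemma~\ref{lem:lemma on weak amenable actions} is a general fact valid for \emph{any} second countable locally compact group with property~(T); its proof uses only abstract amenability/property~(T) arguments and makes no reference to parabolic subgroups, boundaries, or the intermediate factor theorem. The paper accordingly just cites \cite[Lemma~1.5]{stuck1994stabilizers}. The intermediate factor theorem enters the Stuck--Zimmer argument \emph{after} this lemma, precisely to handle the case where the action is \emph{not} weakly amenable: one then builds the non-trivial intermediate factor $Y$ and applies Theorem~\ref{thm:Intermediate-factor-theorem}. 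You have inverted this logic, attempting to use Theorem~\ref{thm:Intermediate-factor-theorem} to establish the lemma.

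Beyond the structural confusion, the argument you sketch does not actually reach the conclusion. Obtaining $Y \cong X \times \nicefrac{G}{\QQ_k}$ from the intermediate factor theorem says nothing about essential transitivity of $X$; a $G$-equivariant map $X \to \nicefrac{G}{\QQ_k}$ (even if you could produce one from a section) only fibers $X$ over a homogeneous space and does not collapse the fibers. Your claim that property~(T) ``collapses fibered measures to point masses'' is asserted rather than proved, and in any case this is not how the lemma is established. The correct, and much shorter, route is the one in \cite{stuck1994stabilizers}: for a group with~(T), an ergodic weakly amenable action has an amenable orbit equivalence relation, and property~(T) forces such a relation to have finite classes, whence the action is essentially transitive.
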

\begin{proof}
See Lemma 1.5 of \cite{stuck1994stabilizers}, which is proved for any second countable locally compact group with property (T).
\end{proof}

For a discussion of amenable and weakly amenable actions the reader
is referred to \citep{stuck1994stabilizers,zimmer1984ergodic}. Since
$G$ has property $\left(T\right)$, we may assume that the action
is not weakly amenable.

Next, we let $P=\PP_{k}$ where $\PP$ is a minimal parabolic $k$-subgroup
of $\GG$. We equip $\nicefrac{G}{P}$ with a quasi-invariant measure
$\mu$. $P$ is solvable and hence amenable, and so the $G$ action
on $\nicefrac{G}{P}$ is amenable (see Section 4.3 in \citep{zimmer1984ergodic}).
From this fact combined with the contra-positive of the weak amenability
of the action one obtains a measure space $\left(Y,\lambda\right)$ and
a pair of $G$-equivariant maps
\[
\nicefrac{G}{P}\times X\overset{f}{\to}Y\overset{p}{\to}X
\]
such that $p\circ f$ is the projection on $X$ (for a general formulation of this argument, see Proposition 2.4.5 of \cite{creutz2013stabilizers}). Furthermore $f$
and $p$ are $G$-maps with respect to the measures $\mu\times\eta$,
$\lambda$ and $\eta$ on $\nicefrac{G}{P}\times X$, $Y$ and $X$ respectively. 

We are now in a situation to apply the Nevo-Zimmer intermediate factor theorem
--- namely Theorem 1.1 of \citep{nevo2002generalization} in the setting of real Lie groups, or our Theorem \ref{thm:Intermediate-factor-theorem} for arbitrary $k$. As a result
we may assume that $Y\cong\nicefrac{G}{Q}\times X$ where $Q=\QQ_{k}$
for a parabolic $k$-subgroup $\QQ$ containing  $\PP$, that $f$ and
$p$ are the natural maps and that the $G$-action on $Y$ is the
product action. Furthermore $Q$ is a proper subgroup of $G$ --- this
follows from the lack of weak amenability.

To complete the proof we consider the stabilizers $G_{x}$ for $x\in X$.
Again, from the definition of a weak amenable action we obtain that
for almost every $x\in X$ the stabilizer subgroup $G_{x}$ is acting trivially on $p^{-1}\left(x\right)$.
Hence
\[
G_{x}\subset H_{x}=\bigcap_{gQ}gQg^{-1}
\]
where the intersection is taken over almost every $gQ\in\nicefrac{G}{Q}$. By standard
arguments $H_{x}$ is a normal subgroup of $G$. $H_x$   is  proper
since $Q\lneq G$. In other words,  $\mu$-almost every stabilizer $G_{x}$ is contained
in a proper normal subgroup. To conclude that $G$ is acting essentially freely thereby  completing the proof we invoke 

\begin{lem}
\label{lem:on stabilisers in proper normal subgroups}
Assume that $G$ is acting faithfully and irreducibly. Let $N \lhd G$ be a normal subgroup. Then the centraliser $Z_G(N)$ is acting essentially freely.
\end{lem}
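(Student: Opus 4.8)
The plan is to show that $Z_G(N)$ acts with trivial stabilizers almost everywhere, and the natural strategy is to combine faithfulness with irreducibility, exploiting the fact that a stabilizer subgroup of the $Z_G(N)$-action, being an intersection of point stabilizers for the whole $G$-action along an orbit, interacts well with the normal subgroup structure of $G$. First I would fix notation: let $Z = Z_G(N)$ and consider the restriction of the $G$-action to $Z$. Since $Z$ is itself a normal subgroup of $G$ (as the centralizer of a normal subgroup), irreducibility of the $G$-action tells us that either $Z$ is central — in which case the claim follows trivially from faithfulness, since a central element fixing a positive measure set must be trivial — or $Z$ acts ergodically on $X$. So the substantive case is when $Z$ acts ergodically, and I want to deduce essential freeness of this ergodic action.

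In the ergodic case, I would argue as follows. The stabilizer map $x \mapsto G_x$ (equivalently $x \mapsto Z_x = G_x \cap Z$) is a measurable map to the space of closed subgroups with the Effros/Chabauty Borel structure, and it is $G$-equivariant in the sense that $Z_{gx} = g Z_x g^{-1}$. Consider the subgroup $K_x = \overline{\langle \cup_{h \in N} h Z_x h^{-1} \rangle}$ generated by the $N$-conjugates of $Z_x$; since elements of $Z$ commute with elements of $N$, each $N$-conjugate $h Z_x h^{-1}$ of an element $z \in Z_x$ acting on $hx$ — wait, the key point is rather that $N$ normalizes $Z$, so $K_x \subseteq Z$, and moreover one checks that $Z_x$ is normalized by $N_x = N \cap G_x$ and the relevant object to track is how $Z_x$ varies. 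The cleanest route: let $M = \overline{\langle g Z_x g^{-1} : g \in G, x \in X \rangle}$, the normal closure in $G$ of the (essential) union of all stabilizers of the $Z$-action. Because the stabilizer map is $G$-equivariant and $X$ is ergodic, standard arguments (cf. the treatment of $H_x$ in the preceding proof) show $M$ is a well-defined closed normal subgroup of $G$, and — here faithfulness enters — $M$ must be all of $G$ only if... no: I would instead show $M$ is central. Indeed, if $z \in Z_x$ for $x$ in a positive measure set, then by ergodicity and equivariance the set of points fixed by $z$ is invariant under the centralizer... Let me restructure: I would show that for a.e. $x$, the stabilizer $Z_x$ is contained in $Z \cap N = Z(N)$-type obstruction is not quite it either.

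The honest approach is: since $Z$ acts ergodically (the non-central case), and $Z_x = G_x \cap Z$ depends equivariantly on $x$, consider $W = \{(x, z) \in X \times Z : zx = x\}$; its fibers over $X$ are the $Z_x$, while its fibers over $Z$ are $\mathrm{Fix}(z) = \{x : zx = x\}$, a $Z_G(z)$-invariant (in particular $N$-invariant, since $z \in Z = Z_G(N)$ implies $N \subseteq Z_G(z)$) measurable subset of $X$. By irreducibility of the $G$-action, $N$ acts ergodically, so $\mathrm{Fix}(z)$ is null or conull for each $z \in Z$. If $\mathrm{Fix}(z)$ is conull for a non-central $z$, then $z$ fixes a.e. point, so $z$ acts trivially, contradicting faithfulness (a faithful action has $gx \neq x$ on a positive measure set for every nontrivial $g$); hence $\mathrm{Fix}(z)$ is null for every non-central $z \in Z$. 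Now Fubini on $W$ with the measure $\eta \times (\text{counting or Haar on } Z)$ — more precisely, writing $Z_x \setminus (Z \cap \text{center})$ and integrating — shows that for a.e. $x$, $Z_x$ is contained in the center of $G$; combined with the faithfulness-driven fact that the only central element with a positive-measure fixed set is the identity, we get $Z_x = \{e\}$ for a.e. $x$, i.e. $Z$ acts essentially freely.

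The main obstacle I anticipate is the measure-theoretic bookkeeping in the Fubini step: $Z$ is typically non-compact and non-discrete, so "for a.e. $x$, $Z_x$ lies in the center" does not follow from a naive Fubini over $X \times Z$ against a product of $\eta$ with Haar measure (a null fiber over each $z$ does not immediately give a null fiber over each $x$ when the total space is only $\sigma$-finite). The standard fix is to work with the measurable stabilizer map $x \mapsto Z_x \in \mathrm{Sub}(Z)$ directly, use that $\{x : Z_x \not\subseteq Z(G)\}$ is $G$-invariant hence by ergodicity null or conull, and rule out conull by producing, on that set, a measurable selection of a non-central stabilizing element $z(x) \in Z_x$ and pushing forward to contradict that each $\mathrm{Fix}(z)$ is null — or, most robustly, to quote the relevant structural input (algebraic groups over local fields have the property that point stabilizers of ergodic actions are either "large" or behave well, cf. the Borel density / algebraic hull arguments of Stuck--Zimmer and Zimmer's book). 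I would handle this by invoking the measurable selection theorem on the Effros Borel space of closed subgroups of $Z$ together with the ergodicity dichotomy, keeping the argument parallel to the treatment of $H_x$ in the proof of the Stuck--Zimmer theorem above.
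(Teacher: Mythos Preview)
The paper does not prove this lemma; it simply cites Lemma~1.8 of Stuck--Zimmer. Your core idea matches theirs and is correct: for nontrivial $z \in Z := Z_G(N)$ the set $\mathrm{Fix}(z)$ is $N$-invariant (since $z$ commutes with $N$), hence null or conull by $N$-ergodicity, and faithfulness rules out conull.

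The gap is exactly where you locate it: passing from ``$\mathrm{Fix}(z)$ null for every $z\neq e$'' to ``$Z_x=\{e\}$ for a.e.\ $x$''. Naive Fubini against Haar measure only yields that $Z_x$ is Haar-null, not trivial, and your proposed repairs (measurable selection of a nontrivial $z(x)\in Z_x$ followed by a pushforward) do not produce a contradiction as written --- a measure on $Z\setminus\{e\}$ obtained this way says nothing about any individual $\mathrm{Fix}(z)$. The clean resolution, which you almost reach when you begin examining $N$-conjugates of $Z_x$, is that the $Z$-stabilizer map $x\mapsto Z_x$ is not merely $G$-equivariant but actually \emph{$N$-invariant}: for $n\in N$ and $z\in Z$ one has $n^{-1}zn=z$, so $z(nx)=nx$ iff $zx=x$, whence $Z_{nx}=Z_x$. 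Since $N$ acts ergodically (this requires $N$ non-central --- a hypothesis implicit in how the lemma is applied, and without which the conclusion can fail, e.g.\ $G=\mathrm{SL}_2(\mathbb{R})$ on $G/\Gamma$ with $N=\{e\}$), the measurable map $x\mapsto Z_x$ into the Effros Borel space is essentially constant, say $Z_x=H$ a.e.; then every $h\in H$ fixes a conull set, and faithfulness forces $H=\{e\}$. No Fubini or selection argument is needed.
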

\begin{proof}
This is Lemma 1.8 of \cite{stuck1994stabilizers}, proved there for a general second countable locally compact group $G$.
\end{proof}


\section{Invariant random subgroup rigidity}
\label{sec:invariant random subgroup rigidity}

In virtue of the fact that an invariant random subgroup is nothing but a $G$-space of a particular kind, it is clear that the Stuck-Zimmer theorem provides information on invariant random subgroups. However, to go from Theorem \ref{thm:stuck-zimmer-over-local-fields} to the rigidity result of Corollary \ref{cor:On IRS} several additional arguments are required.

First, we need to show that an homogeneous space which admits an invariant probability measure necessary corresponds to a lattice. Secondly, note that the natural action of $G$ on its space of closed subgroups by conjugation has as the stabilizer of the point $H \le G$ not $H$ itself but rather its normalizer $N_G(H) \le G$.

The following is essentially the proof given in Section 4 of \cite{abert2012growth} (see also \cite{gelander2015lecture}). We repeat it here with minor modifications to treat the case of arbitrary local fields.

\begin{proof}[Proof of Corollary \ref{cor:On IRS}]
Let $k$ be a local field and $\GG$  a connected simply-connected almost $k$-simple algebraic $k$-group. Assume that $\GG$ satisfies the conditions of Theorem \ref{thm:stuck-zimmer-over-local-fields}, so that in particular $\rank{\GG}{k} \ge 2$ and $G = \GG_k$ has property (T).

Let $\mu$ be an irreducible invariant random subgroup of $G = \GG_k$. Consider the Chaubuty space of $G$ endowed with the invariant probability measure $\mu$ and regarded as a $G$-space. As noted above, the stabilizer of every point $H \le G$ in this $G$-space is the corresponding normalizer $N_G(H)$.

It follows from the irreducibility assumption that the action has central kernel.
Therefore Theorem \ref{thm:stuck-zimmer-over-local-fields} applies and we deduce that either $\mu$-almost every stabilizer is central or the action is essentially transitive. The first case is clearly not possible.

It remains to deal with the essentially transitive case. Say that $\mu$-almost every subgroup of $G$ is conjugate to a certain $H \le G$. Moreover there is a $G$-invariant probability measure on $G / \Lambda$ where $\Lambda = N_G(H)$. According to Theorem 2 of \cite{margulis1977cobounded} there is a non-discrete normal subgroup $N \lhd G$ such that $N \le \Lambda$ and the projection of $\Lambda$ to $G/N$ is a lattice. However such a subgroup $N$ would belong to the kernel of the action and hence $N$ must be trivial.  In particular $\Lambda$ is a lattice in $G$ admitting $\Gamma$ as a normal subgroup. Recall that by the normal subgroup theorem $\Gamma$ is  either  of finite index $\Lambda$ and hence a lattice in its own right, or $\Gamma$ is central. The latter case is ruled out as it implies $\Lambda = G$ and $\mu$ was assumed to be non-atomic.

\end{proof}

In \cite{abert2012growth}  the classical Borel density theorem is invoked to show that a subgroup $\Lambda \le G$ such that $G/\Lambda$ admits a $G$-invariant probability measure must be discrete. However, this reasoning does not apply in the positive characteristic case where Lie algebra methods are not as useful. The theorem of Margulis quoted in the proof overcomes this difficulty.

\section{The non simply-connected case and the real case}
\label{sec:non simply connected and the real case}

Up to this point the group $\GG$ was assumed to be simply-connected. We now discuss the more general situation of a possibly non simply-connected group.
This greater generality turns out to be necessary to be able to obtain the real Lie group version of Theorems \ref{thm:Intermediate-factor-theorem} and \ref{thm:stuck-zimmer-over-local-fields} as a formal corollary of the above discussion. 

However, without the simple-connectedness some arguments become technically more involved. For this reason we preferred to present the core ideas of the proofs in Sections \ref{sec:proof of IFT} and \ref{sec:The-Stuck-Zimmer-theorem} with the added assumption of simple-connectedness. 

In Subsection \ref{sub:statement of the theorems} we state our two main theorems in the non simply-connected case. The proofs are deferred to Subsection \ref{sub:proofs in non-simply connected case}. The real Lie group case in discussed in Subsection \ref{sub:the real case}.

Throughout the current section $\GG$ denotes a connected semisimple algebraic $k$-group without $k$-anisotropic almost $k$-simple subgroups\footnote{In other words, we drop the simple-connectedness assumption on $\GG$.}.

\subsection{Main theorems in the non-simply connected case}
\label{sub:statement of the theorems}

The statements will depend on a certain normal subgroup $\GG^+_k \lhd \GG_k$ that is defined and discussed in Subsection \ref{sub:the group G+} below. Note that $\GG_k = \GG^+_k$ whenever $\GG$ is simply-connected.

To state both Theorems \ref{thm:IFT-general form} and \ref{thm:SZ - general form}  we introduce the notation $G^\dagger$. Here  $G^\dagger$ is a subgroup satisfying
$$ \GG_k^+ \le G^\dagger \le \GG_k $$

\begin{thm}
	\label{thm:IFT-general form}
	Assume that $\rank{\GG}k\ge 2$. 
    Given an 	irreducible $G^\dagger$-space $X$ with finite invariant measure,  an
	ergodic $G^\dagger$-space $Y$ and  a pair of $G^\dagger$-maps
	$$X\times\nicefrac{\GG_k}{\PP_{k}}\to Y\to X$$ whose composition is the
	projection, the conclusion of Theorem \ref{thm:Intermediate-factor-theorem} holds true.
\end{thm}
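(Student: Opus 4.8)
The strategy is to reduce Theorem \ref{thm:IFT-general form} for the intermediate group $G^\dagger$ (with $\GG_k^+ \le G^\dagger \le \GG_k$) to the simply-connected case already treated in Theorem \ref{thm:Intermediate-factor-theorem}. The key observation is that everything in the proof of the simply-connected case — the Main Lemma \ref{lem:main lemma}, the ergodicity argument of Subsection \ref{sub:ergodicity argument}, and the final extraction argument in Subsection \ref{sub:proof of ift} — used simple-connectedness only in a few places: (i) to invoke Mautner's Lemma \ref{lem:cor of Mautner lemma} for the ergodicity of the $s$-action on $X$; (ii) to know that a $G$-invariant measure sub-algebra of $\malg{\nicefrac{G}{P}}$ is of the form $\malg{\nicefrac{G}{P_\theta}}$; and (iii) implicitly, that $\nicefrac{G}{P}$ is the correct Furstenberg boundary. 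So the first step is to check that each of these inputs has an analogue valid for $G^\dagger$ (this is exactly where the group $\GG_k^+$ and the forthcoming Proposition \ref{prop:invariant under G+} enter — a $\GG_k^+$-invariant sub-algebra of $\malg{\nicefrac{\GG_k}{\PP_k}}$ is still of the form $\malg{\nicefrac{\GG_k}{(\PP_\theta)_k}}$, since $\GG_k^+$ acts on the boundary as densely as $\GG_k$ does in the relevant sense, and $\GG_k^+ \le G^\dagger$).

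\textbf{Key steps in order.} First I would record the structural facts about $\GG_k^+$ from Subsection \ref{sub:the group G+}: that $\GG_k^+$ is a cocompact (indeed finite-index behaviour on factors) normal subgroup, that $\GG_k^+ \cap \PP_k$ is a minimal parabolic-type subgroup of $\GG_k^+$, and that $\nicefrac{\GG_k^+}{(\GG_k^+\cap\PP_k)} \to \nicefrac{\GG_k}{\PP_k}$ is an isomorphism of measure $G^\dagger$-spaces — so that the boundary is unchanged. Second, I would observe that an irreducible $G^\dagger$-space $X$ with finite invariant measure is in particular an irreducible $\GG_k^+$-space (irreducibility is about non-central normal subgroups acting ergodically, and normal subgroups of $\GG_k^+$ are controlled), so Mautner's lemma applies: for $s \in S_\theta$ with $\mathrm{Inn}(s)$ contracting on some $V_\theta$, $\theta \subsetneq \Delta$, the $s$-action on $X$ is ergodic. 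Note $s$ can be chosen inside $\GG_k^+$. Third, I would re-run the machinery of Section \ref{sec:measure algebras} and Section \ref{sec:ift} verbatim with $G$ replaced by $G^\dagger$: the direct integral decomposition $\malg{Y} = \int^\oplus \malg{Y}(x)\,d\eta(x)$, the Effros-Borel measurability (Theorem \ref{thm:iota is measurable}), the Main Lemma, and Propositions \ref{pro:GxZ acts ergodically}–\ref{prop:Z_F is conull} all go through since they only used ergodicity of $s$ and the homeomorphism structure of $\overline{V}_\theta \cdot P_\theta$, which is an algebraic-group fact independent of simple-connectedness. Fourth, in the final argument of Subsection \ref{sub:proof of ift}, the only new input is that the sub-algebra $\malg{1}(x)$ generated by $\malg{\nicefrac{\GG_k}{(\PP_\theta)_k}}$ and the $G^\dagger$-translates of $\psi_b(uC(x))$, being $G^\dagger$-invariant hence $\GG_k^+$-invariant, equals $\malg{\nicefrac{\GG_k}{(\PP_{\theta'})_k}}$ for some $\theta' \subsetneq \theta$ — this is precisely Proposition \ref{prop:invariant under G+}. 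With that, the maximality contradiction goes through unchanged and we conclude $Y \cong X \times \nicefrac{\GG_k}{\QQ_k}$ as a $G^\dagger$-space with the maps being projections.

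\textbf{The main obstacle.} The delicate point is the classification of $G^\dagger$-invariant (equivalently $\GG_k^+$-invariant) measure sub-algebras of $\malg{\nicefrac{\GG_k}{\PP_k}}$, i.e. ensuring that passing from $\GG_k$ to the possibly-smaller $\GG_k^+$ does not create new invariant sub-algebras beyond the $\malg{\nicefrac{\GG_k}{(\PP_\theta)_k}}$. This is the content of Proposition \ref{prop:invariant under G+}, and I would lean on it as a black box; its proof presumably uses that $\GG_k^+$ is generated by the $k$-points of unipotent radicals of opposite minimal parabolics (so acts "transitively enough" on the flag variety) together with the Bruhat-type decomposition. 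A secondary technical point is the equivariance trick passing to the conull $G^\dagger$-invariant set $X_0$ on which $g\iota(x) = \iota(gx)$ holds for \emph{every} $g \in G^\dagger$; this is standard (Appendix B of \cite{zimmer1984ergodic}) and causes no trouble. Finally, one should double-check that "irreducible $G^\dagger$-space" implies "irreducible $\GG_k^+$-space" in the precise sense needed — this requires knowing the non-central normal subgroups of $\GG_k^+$ are cocompact/large enough, which again is part of the structure theory of $\GG_k^+$ developed in Subsection \ref{sub:the group G+}.
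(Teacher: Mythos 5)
Your proposal is correct and follows essentially the same route as the paper: replace the ergodicity input from Mautner's lemma by the weaker Lemma \ref{lem:cor of Mautner lemma rel} valid over $G^\dagger$, combine it with irreducibility (using that $\HH_k^+\lhd G^\dagger$ is non-central), observe that the boundary $\nicefrac{\GG_k}{\PP_k}$ is unchanged as a $G^\dagger$-space, re-run the Main Lemma, and finish with Proposition \ref{prop:invariant under G+}. The one expository difference is that you aim to establish the Main Lemma for all $g\in G^\dagger$, whereas the paper is slightly more conservative: it alters the definition of $Z_N$ so that density is required only in $\GG_k^+$ and proves the Main Lemma only for $g\in\GG_k^+$ --- which is enough, since the final classification step (Proposition \ref{prop:invariant under G+}) only requires $\GG_k^+$-invariance; both variants are sound, and yours would in principle give a marginally stronger statement at the cost of having to verify density of $\{hgs^{-n}\}$ in all of $G^\dagger$ rather than just $\GG_k^+$.
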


\begin{thm}
	\label{thm:SZ - general form}
	Assume that $\rank{\GG}k\ge 2$
	and that $G^\dagger$ has property (T). Then every faithful, properly ergodic and irreducible $G^\dagger$-space with finite invariant measure is essentially free.
\end{thm}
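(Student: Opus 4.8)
The plan is to follow the proof of Theorem~\ref{thm:stuck-zimmer-over-local-fields} given in Section~\ref{sec:The-Stuck-Zimmer-theorem} essentially verbatim, feeding in Theorem~\ref{thm:IFT-general form} in place of Theorem~\ref{thm:Intermediate-factor-theorem} and checking that each auxiliary input survives the replacement of $\GG_{k}$ by the intermediate subgroup $G^{\dagger}$. Let $X$ be a faithful, properly ergodic, irreducible $G^{\dagger}$-space with finite invariant measure $\eta$. Since $G^{\dagger}$ is second countable locally compact (see Subsection~\ref{sub:the group G+}) and has property (T), Lemma~\ref{lem:lemma on weak amenable actions} applies; as the action is not essentially transitive, it is not weakly amenable.

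Next I would equip $\nicefrac{\GG_{k}}{\PP_{k}}$ with a quasi-invariant measure and observe that the $\GG_{k}$-action on it is amenable because $\PP_{k}$ is amenable, so that, amenability of an action passing to closed subgroups, the $G^{\dagger}$-action on $\nicefrac{\GG_{k}}{\PP_{k}}$ is amenable as well. Combined with the failure of weak amenability this produces, exactly as in Section~\ref{sec:The-Stuck-Zimmer-theorem} (cf.\ Proposition~2.4.5 of \cite{creutz2013stabilizers}), an ergodic $G^{\dagger}$-space $(Y,\lambda)$ and a pair of $G^{\dagger}$-maps $\nicefrac{\GG_{k}}{\PP_{k}}\times X\to Y\to X$ whose composition is the projection. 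Theorem~\ref{thm:IFT-general form} then identifies $Y\cong\nicefrac{\GG_{k}}{\QQ_{k}}\times X$ with the maps being the natural ones, where $\QQ\supseteq\PP$ is parabolic; and $\QQ$ is a proper subgroup, since otherwise the action would be weakly amenable.

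From the definition of weak amenability, for $\eta$-almost every $x$ the stabilizer $G^{\dagger}_{x}$ acts trivially on the fibre $\{x\}\times\nicefrac{\GG_{k}}{\QQ_{k}}$, hence $G^{\dagger}_{x}\subset H:=\bigl(\bigcap_{g\QQ_{k}}g\QQ_{k}g^{-1}\bigr)\cap G^{\dagger}$, the intersection taken over a conull set of cosets. Since $\bigcap g\QQ_{k}g^{-1}$ is normal in $\GG_{k}$ it is normalised by $G^{\dagger}$, so $H\lhd G^{\dagger}$; and $H$ is proper in $G^{\dagger}$ because $\bigcap g\QQ_{k}g^{-1}\subseteq\QQ_{k}$ while $\GG_{k}^{+}\le G^{\dagger}$ is Zariski dense in $\GG$ and hence not contained in the proper parabolic $\QQ$. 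Thus $\eta$-almost every stabilizer lies in the proper normal subgroup $H$. I would then conclude with Lemma~\ref{lem:on stabilisers in proper normal subgroups} together with the description of the normal subgroups of $G^{\dagger}$ from Subsection~\ref{sub:the group G+}: writing $\GG$ as the product of its almost $k$-simple factors and using $\GG_{k}^{+}\le G^{\dagger}$, the subgroup $H$ is, modulo the centre, supported on a proper subset of these factors, so its centraliser $Z_{G^{\dagger}}(H)$ contains the complementary factors $\GG_{k}^{(j)+}$; by Lemma~\ref{lem:on stabilisers in proper normal subgroups} the group $Z_{G^{\dagger}}(H)$ acts essentially freely, and peeling off the almost $k$-simple factors of $\GG$ one at a time in this way — using at each stage the normal subgroup theorem for the factor, and ergodicity together with faithfulness to handle the central elements (the fixed set of a central element is $G^{\dagger}$-invariant, hence null by faithfulness) — forces $G^{\dagger}_{x}$ to be trivial for $\eta$-almost every $x$.

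The initial reductions carry over from Section~\ref{sec:The-Stuck-Zimmer-theorem} with no change, and are the content of \cite{stuck1994stabilizers}. The one genuinely new point — and the step I expect to be the main obstacle — is the final one: controlling the lattice of normal subgroups of the intermediate group $G^{\dagger}$ (which is neither simply-connected nor necessarily equal to $\GG_{k}$ or to $\GG_{k}^{+}$) and verifying that ``$H$ proper normal in $G^{\dagger}$'' together with ``$Z_{G^{\dagger}}(N)$ acts essentially freely'' really does force essential freeness of the whole action. This requires precisely the structural facts about $\GG_{k}^{+}$ and $G^{\dagger}$ developed in Subsection~\ref{sub:the group G+} (that $\GG_{k}^{+}$ is a normal subgroup with $\GG_{k}^{+}\le G^{\dagger}\le\GG_{k}$, that $G^{\dagger}$ modulo its centre decomposes along the almost $k$-simple factors of $\GG$, and the normal subgroup theorem for each such factor), which is the reason the non-simply-connected case is carried out separately here rather than absorbed into Section~\ref{sec:The-Stuck-Zimmer-theorem}.
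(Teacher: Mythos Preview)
Your proposal tracks the paper's argument correctly through the application of Theorem~\ref{thm:IFT-general form} and the conclusion that $G^{\dagger}_{x}\le H$ for a fixed proper normal subgroup $H\lhd G^{\dagger}$. The divergence is in the endgame, and there your argument has a genuine gap.

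You apply Lemma~\ref{lem:on stabilisers in proper normal subgroups} with $N=H$, obtaining that $Z_{G^{\dagger}}(H)$ acts essentially freely. But this is the wrong direction: the stabilizers lie in $H$, not in $Z_{G^{\dagger}}(H)$, so the conclusion $G^{\dagger}_{x}\cap Z_{G^{\dagger}}(H)=\{e\}$ gives nothing beyond what you already know. Your proposed ``peeling off one factor at a time'' does not obviously repair this --- the restricted action of a proper product of factors need not be faithful or irreducible, so you cannot simply iterate the whole argument. Also, what you call ``the normal subgroup theorem for the factor'' is the wrong reference: the normal subgroup theorem concerns lattices; the structural fact needed here is that every subgroup of $\HH_{k}$ normalised by $\HH_{k}^{+}$ is either central or contains $\HH_{k}^{+}$ (see e.g.\ \cite{margulis1991discrete}, I.1.5.6).

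The paper's fix is to reverse the roles in Lemma~\ref{lem:on stabilisers in proper normal subgroups}. Choose one almost $k$-simple factor $\HH$ with $\QQ\cap\HH\lneq\HH$; then $H\cap\HH_{k}^{+}$ is a proper subgroup of $\HH_{k}^{+}$ normalised by $\HH_{k}^{+}$, hence central by the structural fact above, and the almost-direct-product decomposition of $G^{\dagger}$ forces $H\le Z_{G^{\dagger}}(\HH_{k})$. Now apply Lemma~\ref{lem:on stabilisers in proper normal subgroups} with $N=\HH_{k}\cap G^{\dagger}$: the centraliser $Z_{G^{\dagger}}(N)$ acts essentially freely, and since $G^{\dagger}_{x}\le H\le Z_{G^{\dagger}}(N)$ one concludes $G^{\dagger}_{x}=\{e\}$ in a single step --- no iteration is required.
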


Proofs of both theorems are given below in Subsection \ref{sub:proofs in non-simply connected case}.

\subsection{The group $\GG_k^+$}
\label{sub:the group G+}

Several additional subtleties that arise in the non-simply connected case have to do with the group $\GG_k ^+$.

\begin{defn}
\label{def: the group G+}
$\GG_{k}^{+}$ is the subgroup of $\GG_{k}$ generated by the $\mathbb{U}_k$ where $\mathbb{U}$ runs through the unipotent $k$-split subgroups of $\GG$.
\end{defn}

It is clear that $\GG_k^+$ is a normal subgroup. If $k$ is perfect\footnote{A local field $k$ is perfect if and only if $\mathrm{char}(k) = 0$.} then equivalently $\GG_k^+$ is  the subgroup generated by the unipotent elements of $\GG_k$.  As already noted above, $\GG_k^+ = \GG_k$ whenever $\GG$ is simply connected and has no $k$-anisotropic factors. These as well as the other properties of $\GG_{k}^{+}$ used below are discussed in sections I.1.5 and I.2.3 of \citep{margulis1991discrete}. 

\subsubsection*{$\GG^+_k$ and property (T)} 
It is shown in \cite{margulis1991discrete}, Lemma III.5.2 that the group $\GG_k$ has property (T) if and only if $\tilde{\GG}_k$ does, where $\tilde{\GG}$ denotes the algebraic universal cover group of $\GG$. Additionally for a local field $k$ the group $\GG_k^+$ is cocompact (and normal) in $\GG_k$. As is well-known (see e.g. \cite{margulis1991discrete}, Corollary III.2.13) in this situation $\GG_k$ has  (T) if and and only if $G^\dagger$ does.

To summarise, the necessary and sufficient conditions for $\GG_k$ to have property (T) that are given in the introduction (see Subsection \ref{sub:intro_nevo_stuck_zimmer_theorem}) apply equally well to the groups of the form $G^\dagger$.

\subsubsection*{$\GG^+_k$ and parabolic subgroups}

It is important for our purposes to clarify the relationship of the subgroup $\GG_k^+$ with the parabolic $k$-subgroups of $\GG$. 

Recall is $\SS$ a maximal $k$-split torus of $\GG$ and $\PP$ is a minimal $k$-parabolic subgroup (see Subsection \ref{sub:subgroup_structure}). We have that
$$ \GG_k = \GG_k^+ \cdot Z_\GG(\SS)_k$$
where $Z$ denotes the centralizer. On the other hand $Z_\GG(\SS) \le \PP$. This implies that
$$ \GG_k / \PP_k =  G / P = G^\dagger / (P \cap G^\dagger) $$
as well as the analogous fact with $\PP$ replaced by any other parabolic $\PP_\theta, \theta \subset \Delta$. In particular we have the following isomorphisms on the level of measure algebras
$$ \malg{\overline{V}} \cong \malg{G/P} \cong \malg{G^\dagger / (P \cap G^\dagger)} $$

To conclude the present discussion of parabolic subgroups we quote \cite{margulis1978quotient}, 1.13.

\begin{prop}
\label{prop:invariant under G+}
Let $\malg{}$ be a measure subalgebra of $\malg{G/P}$ that is invariant under $\GG_k^+$. Then $\malg{} = \malg{G/P_\theta}$ for some subset $\theta \subset \Delta$ of simple roots.
\end{prop}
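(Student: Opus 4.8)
The plan is to reduce Proposition \ref{prop:invariant under G+} to the classification of closed $G$-invariant (or rather $\GG_k^+$-invariant) subalgebras, which is exactly the content needed to identify the intermediate factor in the factor theorem of Margulis. First I would translate the algebraic statement into the geometric picture developed in Subsection \ref{sub:subgroup_structure}: using Lemma \ref{lem:relations of V P and G} we identify $\malg{G/P}$ with $\malg{\overline{V}}$ as a $G$-space, and under this identification the natural map $\nicefrac{G}{P} \to \nicefrac{G}{P_\theta}$ becomes the projection $\overline{V} \to \overline{V}_\theta$ along the decomposition $\overline{V} = \overline{V}_\theta \rtimes \overline{L}_\theta$. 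Thus $\malg{G/P_\theta}$ is precisely the subalgebra of $\malg{\overline{V}}$ consisting of sets that are unions of $\overline{L}_\theta$-cosets (equivalently, pulled back from $\overline{V}_\theta$). The goal then becomes: a $\GG_k^+$-invariant measure subalgebra $\malg{}$ of $\malg{\overline{V}}$ must be of this form for some $\theta$.

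The key structural input is the action of root subgroups and, crucially, of the contracting tori. Given the invariant subalgebra $\malg{}$, I would define $\theta \subset \Delta$ to be the set of simple roots $b$ such that $\malg{}$ is invariant under the one-parameter unipotent subgroup $V_b$ (or, dually, is "constant along" the corresponding $\overline{V}_b$-direction); more precisely $\theta$ should pick out exactly the directions that $\malg{}$ does \emph{not} resolve, so that $\malg{} \subset \malg{G/P_\theta}$ by construction. The reverse inclusion $\malg{G/P_\theta} \subset \malg{}$ is where the real work lies: one must show that $\malg{}$ already sees every coordinate direction indexed by roots outside $\theta$. Here I would exploit the contracting automorphisms from Subsection \ref{sub:contracting_automorphisms}: for a simple root $b \notin \theta$ there is $s \in S_{\{b\}}$ with $\mathrm{Inn}(s)$ contracting on $V_{\{b\}}$, and applying Lemma \ref{lem:contraction} to sets in $\malg{}$, together with invariance of $\malg{}$ under $\langle s \rangle \subset \GG_k^+$ and closedness of $\malg{}$ in the convergence-in-measure topology (Proposition \ref{prop:properties of BX}), produces the indicator of a $\overline{V}_{\{b\}}$-direction, from which one generates all of $\malg{G/P_b}$. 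Since $\PP_\theta$ is generated by the $\PP_b$ for $b \in \theta$ (noted in Subsection \ref{sub:subgroup_structure}), combining these across all $b \notin \theta$ — or rather, showing the complementary directions are all captured — yields $\malg{G/P_\theta} \subset \malg{}$.

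A cleaner route, and probably the one to actually write, is simply to quote the argument verbatim: this is literally Lemma 1.13 of \cite{margulis1978quotient}, and nothing in that proof uses any field-specific or simple-connectedness hypothesis beyond the fact that $\GG_k^+$ is generated by $k$-split unipotents and that the Bruhat-type geometry of $\overline{V} \cong \nicefrac{G}{P}$ is available, all of which hold in the present generality. So the proof I would present is: "Under the identification $\malg{G/P} \cong \malg{\overline{V}}$ afforded by Lemma \ref{lem:relations of V P and G}, this is exactly Lemma 1.13 of \cite{margulis1978quotient}; we recall the argument." Then I would give the two-paragraph sketch above — define $\theta$ as the non-resolved simple directions, get one inclusion for free, and use contraction (Lemma \ref{lem:contraction}) plus closedness (Proposition \ref{prop:properties of BX}) plus $\langle s\rangle$-invariance to get the other.

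The main obstacle I anticipate is bookkeeping rather than conceptual: making precise the claim that $\malg{}$ "resolves" or "fails to resolve" a given root direction, and ensuring that the set of directions it fails to resolve is closed under the relevant combinatorics (i.e. really corresponds to a subset $\theta$ of \emph{simple} roots rather than an arbitrary set of roots, which is where positivity of the root ordering and the fact that $\PP_\theta$ is generated by the $\PP_b$, $b\in\theta$, enter). The contraction step itself is routine given Lemma \ref{lem:contraction}, but one must be careful that $\psi_b(E)$ of an element $E \in \malg{}$ again lies in $\malg{}$ — this is where invariance under $s \in S_{\{b\}} \subset \GG_k^+$ together with the closedness of the subalgebra is used, and it is the same mechanism already deployed in the proof of Lemma \ref{lem:main lemma}. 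Given that machinery is in place, citing \cite{margulis1978quotient} for the remaining combinatorial verification is entirely legitimate and is the intended reading of the excerpt.
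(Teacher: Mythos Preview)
Your proposal is correct and matches the paper's approach exactly: the paper does not prove this proposition at all but simply quotes it as Lemma~1.13 of \cite{margulis1978quotient}, precisely as you suggest in your ``cleaner route.'' Your additional sketch of Margulis' argument (defining $\theta$ via the unresolved directions, using contraction via Lemma~\ref{lem:contraction} and closedness via Proposition~\ref{prop:properties of BX}) is accurate supplementary detail that the paper itself omits.
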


Note that  we had already used this fact in the proof of Subsection \ref{sub:proof of ift}.

\subsubsection*{Mautner's lemma revisited}

The Mautner lemma for a non-simply connected group has a weaker formulation that takes into account the subgroup $\GG_k^+$ (see \citep{margulis1991discrete}, II.3.3). As in Subsection \ref{sub:statement of the theorems} above we let $G^\dagger$ denote any subgroup satisfying $\GG_k^+ \le G^\dagger \le \GG_k$.

\begin{lem}
	\label{lem:cor of Mautner lemma rel} 
	Let $\rho$ be a unitary representation of $G^\dagger$ into a Hilbert space $\mathcal{H}$.  Assume that $s\in S$ is
	such that $\text{Inn}\left(s\right)$ is contracting on some
	$V_{\theta}$ with $\theta \subsetneq \Delta$. Then every $v \in \mathcal{H}$ with $\rho(s)v = v$ satisfies $\rho(\HH_k^+) v = v$, where $\HH\le \GG$ is a product of a non-empty collection (depending on $s$) of the almost $k$-simple subgroups of $\GG$.
\end{lem}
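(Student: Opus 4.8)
The plan is to derive this relative version of Mautner's lemma from the absolute Mautner lemma applied to the algebraic universal cover, transported back down via the isogeny. Write $\pi : \tilde{\GG} \to \GG$ for the central $k$-isogeny from the simply-connected cover. The induced map on $k$-points has image $\pi(\tilde{\GG}_k) = \GG_k^+$ and central kernel, and more to the point it restricts to an isomorphism on every unipotent $k$-split subgroup; in particular the elements $s \in S_\theta$ realizing a contraction on some $V_\theta$ lift (modulo the issue that $s$ itself need not lie in $\GG_k^+$ — see below) to corresponding torus elements $\tilde{s}$ in $\tilde{\GG}_k$ with $\mathrm{Inn}(\tilde s)$ contracting on the corresponding unipotent subgroup $\tilde V_\theta$. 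First I would reduce to the case $G^\dagger = \GG_k^+$: since $\GG_k^+ \lhd G^\dagger$ and the conclusion only concerns the subgroup $\HH_k^+ \le \GG_k^+$, it suffices to prove the statement for the restriction $\rho|_{\GG_k^+}$, provided $s$ normalizes $\GG_k^+$ and acts on it — which it does, $\GG_k^+$ being normal in $\GG_k$, and this is all that is used to make sense of $\rho(s)$ acting compatibly. So assume $G^\dagger = \GG_k^+$.

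Next I would pull the representation back along $\pi$ to get a unitary representation $\tilde\rho = \rho \circ \pi$ of $\tilde\GG_k$ on $\mathcal H$. Here $\tilde\GG$ decomposes as an almost direct product of its almost $k$-simple factors $\tilde\GG^{(1)}, \ldots, \tilde\GG^{(r)}$; the contracting element $\tilde s$ lies in the split torus and its contraction on $\tilde V_\theta$ involves precisely those factors $\tilde\GG^{(i)}$, $i \in J$, for which the corresponding projection of $\tilde s$ is nontrivial on the unipotent part — call $\HH$ the product of the $\GG^{(i)} = \pi(\tilde\GG^{(i)})$ over $i\in J$, which is the nonempty collection promised in the statement (and the set $J$ indeed depends only on $s$). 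Now apply the absolute Mautner lemma — Lemma \ref{lem:cor of Mautner lemma} is the version for actions, but the underlying statement is the classical operator-theoretic Mautner lemma (see II.3.3 of \cite{margulis1991discrete}) which says precisely: a unitary vector fixed by $\tilde\rho(\tilde s)$ is fixed by $\tilde\rho(\tilde\HH_k)$ where $\tilde\HH = \prod_{i\in J}\tilde\GG^{(i)}$. Translating through $\pi$: a vector $v$ with $\rho(s)v = v$ satisfies $\tilde\rho(\tilde s) v = v$ (using $\pi(\tilde s) = s$ up to a central element, which acts trivially after we observe the central kernel argument, or simply choosing $\tilde s$ to map exactly to $s$ — possible since $s \in S_\theta \subset Z_\GG(\SS_\theta)$ and unipotent-related elements lift), hence $\tilde\rho(\tilde\HH_k)v = v$, hence $\rho(\pi(\tilde\HH_k)) v = v$, and $\pi(\tilde\HH_k) = \HH_k^+$ by the description of $\GG_k^+$ for almost $k$-simple groups. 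This gives $\rho(\HH_k^+)v = v$.

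The main obstacle I anticipate is the bookkeeping around the element $s$: a priori $s \in S \subset Z_\GG(\SS)_k$ need not lie in $\GG_k^+$, so the hypothesis "$\rho(s)v = v$" must be interpreted as using the ambient action of $\GG_k$ (or of $G^\dagger$) rather than of $\GG_k^+$, and one must be careful that $s$ does lift to $\tilde\GG_k$ in a way compatible with $\rho$. The clean way around this is to note that the absolute Mautner lemma in the form II.3.3 of \cite{margulis1991discrete} is already stated for the pair $(\GG_k, \GG_k^+)$ and does not require $s \in \GG_k^+$; so in fact the cleanest route is to quote that reference directly rather than re-deriving it via the isogeny, and the isogeny discussion above is only needed to identify $\HH_k^+$ as the correct "output" subgroup. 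The second, more minor, point to verify carefully is that the collection of almost $k$-simple factors appearing depends only on $s$ (equivalently only on which $V_\theta$ is contracted and how), which follows because the decomposition of $\VV_\theta$ along the almost simple factors of $\GG$ is canonical and $\mathrm{Inn}(s)$ contracts precisely the part coming from the factors on which $s$ projects nontrivially.
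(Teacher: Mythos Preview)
The paper does not supply its own proof of this lemma: it is stated with an explicit pointer to \cite{margulis1991discrete}, II.3.3, and nothing more. Your closing remark --- that the cleanest route is simply to quote that reference directly --- is therefore exactly what the paper does.

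Your isogeny approach is a legitimate alternative route, but it is a detour. Mautner's phenomenon operates at the level of topological groups: if $\mathrm{Inn}(s)$ contracts an element $g$, then any $\rho(s)$-fixed unit vector is $\rho(g)$-fixed, by the standard inner-product computation. This applies directly inside $G^\dagger$ (using that $V_\theta \subset \GG_k^+ \le G^\dagger$), and the passage from $V_\theta$-invariance to $\HH_k^+$-invariance is then the generation argument carried out in II.3.3 of \cite{margulis1991discrete}. Going up to $\tilde\GG$ and back down re-proves the same thing while introducing the lifting issue you correctly flag. On that point, note that your reduction ``restrict $\rho$ to $\GG_k^+$'' is not quite clean: once you restrict, $\rho(s)$ is no longer defined unless $s\in\GG_k^+$, and the lift $\tilde s\in\tilde\GG_k$ need not map to $s$ since $\pi(\tilde\GG_k)=\GG_k^+$ may fail to contain $s$. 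The direct argument sidesteps this entirely because it never leaves $G^\dagger$.
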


In particular if $\GG$ is almost $k$-simple then the conclusion reads $\rho(\GG_k^+)v = v$. As usual, Lemma \ref{lem:cor of Mautner lemma rel} can be applied to the unitary representation associated to any $G^\dagger$-space with finite invariant measure.

\subsection{Real semisimple Lie groups}
\label{sub:the real case}

Recall that the group of $\R$-points $\GG_\R$ of any algebraic group $\GG$ defined over $\R$ is a Lie group. It is known that $\GG_\R$ has finitely many connected components (see \cite{platonov1994algebraic}, Theorem 3.6). 
However even if $\GG$ is connected as an algebraic group it could be that $\GG_\R$ is not topologically connected. 

For $\GG$ connected semisimple and without $\R$-anisotropic factors it turns out that the connected component at the identity $\GG_\R^0$ equals $\GG_\R^+$ (see \cite{margulis1991discrete}, I.2.3.1). In particular, if $\GG$ is algebraically simply-connected then $\GG_\R$ is topologically connected.

It is possible to go in the other direction as well; namely every connected
semisimple real Lie group $G$ with trivial center is the
$\GG_{\R}^{0}$ for some connected semisimple algebraic $\R$-group $\GG$ (see 3.1.6 in \citep{zimmer1984ergodic}). An almost simple factor of $G$ is compact
if and only if the corresponding factor of $\GG$ is $k$-anisotropic (see e.g. I.2.3.6
of \citep{margulis1991discrete}). There is no guarantee, however, that this $\GG$ can be chosen to be simply-connected.

Coming back to the Nevo-Zimmer and Stuck-Zimmer theorems, we see that the real Lie groups versions (namely theorems 4.1 of \cite{zimmer1982ergodic} and 2.1 of \cite{stuck1994stabilizers}) follow immediately from our Theorems \ref{thm:IFT-general form} and \ref{thm:SZ - general form} respectively, as long as the Lie group $G$ in question can be regarded as $\GG_\R^0$ for some corresponding algebraic $\R$-group. As mentioned above this is the case in particular if $G$ has trivial center, or more generally admits a faithful linear representation (see \cite{milne2011algebraic}, Theorem III.2.23).

We remark that as explained in \cite{stuck1994stabilizers} the Stuck-Zimmer theorem for real Lie groups can be reduced to the center-free case.

\subsection{Proofs in the non-simply connected case}
\label{sub:proofs in non-simply connected case}

We  explain the modifications in the proofs of the simple-connected counterparts  Theorems \ref{thm:Intermediate-factor-theorem} and \ref{thm:stuck-zimmer-over-local-fields} that are required to complete the proofs of Theorems \ref{thm:IFT-general form} and \ref{thm:SZ - general form}, respectively. 

Recall that in both theorems $G^\dagger$ is a subgroup with $\GG_k^+ \le G^\dagger \le \GG_k$.

\begin{proof}[Proof of Theorem \ref{thm:IFT-general form}]

A key ingredient in the proof of Theorem \ref{thm:Intermediate-factor-theorem} was the Main Lemma, proved in Subsection \ref{sub:proof_the main lemma}. We claim that in the present situation an entirely analogous statement to Lemma \ref{lem:main lemma} remains true, provided only that  we restrict ourselves to elements $g \in \GG_k^+$.

Indeed, the proof of the main lemma relies on exhibiting an element $g$ as a limit of certain elements $h_n u s^{-m_n}$. This in turn uses Proposition \ref{prop:Z_F is conull} to show that certain subsets $Z_N$ of $G \times X$ are conull. Altering the definition of $Z_N$ to
$$
Z_{N} = \left \{ \left(x,g\right)\in X\times G^\dagger \: : \:  \overline{\left\{ hgs^{-n}\right\} } = G^+, \; \text{where $n\ge 0, h\in G^\dagger$ and $ hx \in N$} \right\} 
$$
it is rather straightforward to see that the relevant variant of Mautner's lemma \ref{lem:cor of Mautner lemma rel} implies that $Z_N$ is conull whenever $\eta(N) > 0$; compare Proposition \ref{pro:GxZ acts ergodically} where the Mautner's lemma was used.

Having obtained the Main Lemma under the restriction that $g \in \GG_k^+$ we next consider the proof of Theorem \ref{thm:Intermediate-factor-theorem} as in Subsection \ref{sub:proof of ift}. First, in light of the above discussion regarding parabolic subgroups we may identify  the three measure algebras $ \malg{\overline{V}}$, $\malg{G/P}$  and $\malg{G^\dagger / (P \cap G^\dagger)} $. One further difficulty is 
that now the main lemma guarantees $G^+$-invariance only. Therefore we need to apply Proposition \ref{prop:invariant under G+} to deduce as before that such a $G^+$-invariant measure sub-algebra of $\malg{G/P}$ must be of the form $\malg{G/P_\theta}$ for a collection of simple roots $\theta \subset \Delta$.

Observe that except for these modifications the arguments of the proof go through.
\end{proof}

\begin{proof}[Proof of Theorem \ref{thm:SZ - general form}]

The argument showing that the action of $G^\dagger$ on $X$ can be assumed to be non weakly amenable (see Lemma \ref{lem:lemma on weak amenable actions}) is applicable to all  second countable locally compact groups and need not be modified.

The main part of the proof consists of applying the intermediate factor theorem. Recall that $G^\dagger/(P \cap G^\dagger) = G/P$  so that $G/P$ can be regarded as a $G^\dagger$-space and  Theorem \ref{thm:IFT-general form} can be applied exactly as in Section \ref{sec:The-Stuck-Zimmer-theorem} above. Let $\QQ$ be the proper parabolic $k$-subgroup of $\GG$ containing $\PP$ that is obtained from Theorem \ref{thm:IFT-general form}.

We deduce that $\mu$-almost every stabilizer $G^\dagger_x $ for the $G^\dagger$ action on $X$ satisfies
$$ G^\dagger_x \le H_x = \bigcap_{gQ^\dagger} g Q^\dagger g^{-1} $$
where we denote $Q^\dagger = \QQ_k \cap G^\dagger$ and the intersection is taken over almost all cosets $gQ^\dagger$ in $G^\dagger/Q^\dagger = G / Q$. Once more we see that $\mu$-almost every stabilizer $G_x^\dagger$ is  contained in a proper normal subgroup $H_x$ of $G^\dagger$. It remains to show that $H_x$ is centralised by some other normal subgroup so that Lemma \ref{lem:on stabilisers in proper normal subgroups} can be applied.

Since $\QQ$ is proper there exists   an almost $k$-simple subgroup $\HH$ of $\GG$ such that $\QQ \cap \HH \lneq \HH$. In particular
$\HH_k^+ \cap Q^\dagger $ and hence $\HH_k^+ \cap H_x $ are both proper subgroups of $\HH_k^+$. As every subgroup of $\HH_k$ that is normalised by $\HH_k^+$ is either central or contains $\HH_k^+$, the intersection $\HH_k^+ \cap H_x$ must be central. However taking into account that $G^\dagger$ is the almost direct product of the $\HH_k \cap G^\dagger$ where $\HH$ runs over the almost $k$-simple subgroups of $\GG$ we obtain $H_x \le Z_G(\HH_k)$ as required.
	
\end{proof}

\bibliographystyle{plainnat}
\bibliography{corrections}

\end{document}